\definecolor{darkblue}{rgb}{0.0,0.0,0.3}
\theoremstyle{plain}
\newtheorem{theorem}{Theorem}[section]
\newtheorem{prop}[theorem]{Proposition}
\newtheorem*{theorem*}{Theorem}
\newtheorem*{prop*}{Proposition}
\newtheorem{lemma}[theorem]{Lemma}
\newtheorem{definition}[theorem]{Definition}
\newtheorem*{remark*}{Remark}
\newtheorem{corollary}[theorem]{Corollary}
\newtheorem*{corollary*}{Corollary}
\newtheoremstyle{named}{}{}{\itshape}{1.5em}{\scshape}{.}{5pt plus 1pt minus 1pt}{\thmnote{#3}}
\theoremstyle{named}
\numberwithin{equation}{section}
\numberwithin{figure}{section}
\author{Siegfried Beckus, Latif Eliaz}
\title[Growth of Eigenfunctions on Graphs]{Growth of Eigenfunctions and $\mathcal{R}$-limits on Graphs}
\address{Institut f\"ur Mathematik\\
Universit\"at Potsdam\\
Potsdam, Germany}
\email{beckus@uni-potsdam.de}
\address{Department of Mathematics\\ 
Technion Israel Institute of Technology\\ 
Haifa, Israel}
\email{latif@campus.technion.ac.il}
\begin{document}
\sloppy

\setcounter{tocdepth}{2}

\begin{abstract}
A characterization of the essential spectrum $\sigma_{\text ess}$ of Schr\"odinger operators on infinite graphs is derived involving the concept of $\mathcal{R}$-limits. This concept, which was introduced previously for operators on $\mathbb{N}$ and $\mathbb{Z}^d$ as ``right-limits", captures the behaviour of the operator at infinity. For graphs with sub-exponential growth rate we show that each point in $\sigma_{\text ess}(H)$ corresponds to a bounded generalized eigenfunction of a corresponding $\mathcal{R}$-limit of $H$. If, additionally, the graph is of uniform sub-exponential growth, also the converse inclusion holds.
\end{abstract}

\maketitle

\section{Introduction}
\label{sec:Intro}

This work deals with Schr\"odinger operators $H:\ell^2(G)\to\ell^2(G)$ of the form
\begin{equation}\label{eq:H}
(H\psi)(v) 
	:= \sum_{u\sim v} \big(\psi(u)-\psi(v)\big) + W(v)\psi(v)
\end{equation}
where $W:V(G)\to\mathbb{R}$ is a bounded function (the potential) and $G$ is an infinite, connected graph with a uniform bound on the vertex degree.

\medskip

Weyl's theorem asserts that the essential spectrum of a linear bounded self-adjoint operator is invariant under compact perturbations. In light of this, one naturally expects that the essential spectrum only depends on the geometry at infinity of the underlying space. This relation has been exposed for Schr\"odinger operators on $\mathbb{N}$ or $\mathbb{Z}^n$. More precisely, the essential spectrum was characterized by the union of the sets $\sigma_\infty(H')$ where $H'$ runs over all right-limits of $H$ and $\sigma_\infty(H')$ denotes the set of bounded generalized eigenfunctions, see \cite{CWL11,SimSz11}. 
Here the term right limit 
refers to the study of both the potential and the geometry at infinity. 
In particular, if $G$ is the Cayley graph of $\mathbb{Z}$ with the usual generators, a right limit is a strong limit point of a sequence of shifts going to infinity of the original operator. This notion of right limits was extended to $\mathbb{Z}^n$ in \cite{LSess06} and recently to general graphs with uniformly bounded vertex degree in \cite{BDE18}. Since the name right limit is no longer appropriate, these operators are called $\mathcal{R}$-limits. 

\medskip

For infinite connected graphs with uniform bound on the vertex degree, the authors of \cite{BDE18} proved that the union over the spectra of all $\mathcal{R}$-limits of $H$ is contained in the essential spectrum of $H$, where $H$ is a bounded Jacobi operator. Moreover they show that the converse inclusion holds on regular trees, and that on the contrary there exists an infinite, connected graph with uniform bound on the vertex degree such that equality does not hold. This is the starting point of the present work. We show that 
$$
\sigma_{\text{ess}}\left(H\right)
	\subseteq\bigcup_{H'\in\mathcal{R}(H)}\sigma_{\infty}\left(H'\right)
$$
holds under additional assumptions on the growth rate of balls, see Theorem~\ref{thm:sigmainfty}. In general this inclusion can be strict, see Proposition~\ref{prop:strictinclusion}.
However, if additionally the graph admits a uniform sub-exponential growth, then this inclusion is an equality and moreover, 
\begin{align}
\sigma_{\text ess}(H)
	=\bigcup_{H'\in\mathcal{R}(H)}\sigma(H')
	=\bigcup_{H'\in\mathcal{R}(H)}\sigma_\infty(H'). \nonumber
\end{align}
see Theorem~\ref{thm:fullresultsubexp}.

\medskip

Such results go back to the concept of limit operators based on Favard \cite{Fav14}, Muhamadiev \cite{Muh71,Muh72}, Lange-Rabinovich \cite{LR86}, Rabinovich-Roch-Silbermann \cite{RRS98,RRSband01,RRS04} and Chandler-Wilde-Lindner \cite{CWL08,CWL11}, see also \cite{Lin06,LS14,RRR04,SW17}. 
In recent developments \cite{HaSe19} pushed these methods forward to metric measure spaces including graphs of property A \cite{Yu00}. Besides other things it is shown there that the essential spectrum coincides with the union of the spectra of so called limit operators.
Another approach to tackle such questions comes from $C^\ast$-algebras \cite{Geo11,Geo18,GI01,GI02,GI04,Man02} which uses the concept of localization at infinity, which coincides with the concept of $\mathcal{R}$-limits. 
In this case similar results for operators on locally compact, non-compact abelian groups are obtained. This was recently extended to groupoid $C^\ast$-algebra \cite{AZ19,CNQ17,CNQ18}.
For a more comprehensive review and further references on the subject see \cite{CWL11,LSess06,SimSz11}. 
Another recent work \cite{GolTru20} develops a similar characterization for the essential spectrum of the Laplacian on {\it Klaus-sparse graphs}. Note that the class of {\it Klaus-sparse graphs} have a non-trivial intersection with the class of uniform sub-exponential growing graph, and neither of these classes contains the other.
Further, \cite{ADY20} takes advantage of the above mentioned result on trees from \cite{BDE18}, to calculate the essential spectrum of Jacobi matrices on homogeneous trees, which are generated by an {\it Angelesco system}. 
\medskip

In order to prove our second main result (Theorem~\ref{thm:fullresultsubexp}), the so-called Shnol-type theorem is used. Shnol \cite{Shnol} proved that if a generalized eigenfunction admits at most a polynomial growth rate then the corresponding energy is in the spectrum of the operator. 
This result was independently discovered by Simon \cite{SimEF81}. Since then various remarkable generalizations to the Dirichlet setting were proven, see e.g.\ \cite{BP18,BD19,BdmLS09} and references therein. In the literature also the converse question is addressed \cite{BdmSt03,FLW14,LT16}. To be more precise, one seeks to find for $\mu$-almost every element in the spectrum, a generalized eigenfunction that has at most sub-exponential 
growth, where $\mu$ is the spectral measure of the operator $H$. 
Such a converse theorem is used in the proof of Theorem~\ref{thm:sigmainfty}. 

\subsection{Organisation}
The main results of this work are presented in Section~\ref{sec:MainRes}. In Section~\ref{sec:Exam}, two examples are provided where the essential spectrum can be computed with the help of the main results. 
After introducing the main concepts such as $\mathcal{R}$-limits, the proof of the main Theorem~\ref{thm:sigmainfty} as well as of Proposition~\ref{prop:strictinclusion} is provided in Section~\ref{sec:CharEssSpec}. Then the proof of Theorem~\ref{thm:fullresultsubexp}, which states the equality, is given in Section~\ref{sec:Thm-fullresultsubexp}.

\subsection{Acknowledgements} 
Parts of this work are included in the PhD thesis of L.~Eliaz~\cite{E}\footnote{Submitted: February 2019.}, carried under the supervision of J.~Breuer from the Hebrew University of Jerusalem. We are grateful to him for his significant support during the preparation of this work. The authors wish to thank M.~Keller for inspiring discussions and for pointing out the reference \cite{Brooks}. L.~Eliaz acknowledges the support of the Israel Science Foundation (grants No.\ 399/16 and 970/15) founded by the Israel Academy of Sciences and Humanities. S.~Beckus is thankful for financial support of the Israel Science Foundation (grant No.\ 970/15) founded by the Israel Academy of Sciences and Humanities during the Postdoctoral period at the Israel Institute of Technology where the main part of this work was established.

\section{Setting and main results}
\label{sec:MainRes}


A graph $G$ consists of a countable vertex set $V(G)$ and an edge set $E(G)$ where an edge is represented by a tuple of vertices. Throughout this work we deal with undirected graphs and so the edge $(u,v)$ is identified with the edge $(v,u)$ for $u,v\in V(G)$. The tuple $(u,u)$ for some $u\in V(G)$ is called a {\em loop}. We only consider graphs without loops. Two vertices $u$ and $v$ are called {\em adjacent} ($u\sim v$) if $(u,v)\in E(G)$. Then the {\em vertex degree} $deg(v)$ of a vertex $v\in V(G)$ is defined by 
$$
deg(v):=\sharp\{u\in V(G)\,:\, v\sim u\},
$$ 
where $\sharp S$ denotes the cardinality of the set $S$. The tuple $(G,v_0)$ is called {\em rooted $d$-bounded graph} if $v_0\in V(G)$ is a fixed vertex and $deg(v)\leq d$ for all $v\in V(G)$. 
A {\em path} between two vertices $u,v\in G$ is given by a chain of vertices $(u_1,\ldots, u_n)$ satisfying $u_1=u, u_n=v$ and $u_i\sim u_{i+1}$ for all $1\leq i\leq n-1$. Then a graph is called {\em connected} if there is a path between any two vertices $u,v\in V(G)$.

\medskip
Let $\ell^2(G):=\ell^2(V(G))$ denote the Hilbert space of all square summable functions $\psi:V(G)\to\mathbb{C}$. Furthermore, $\ell^\infty(G):=\ell^\infty(V(G))$ is the Banach space of bounded functions $\psi:V(G)\to\mathbb{C}$ equipped with the norm $\|\psi\|_\infty:=\sup_{u\in V(G)}|\psi(u)|$.

\medskip

Throughout this work, we study the self-adjoint, linear and bounded operators acting on the Hilbert space $\ell^2(G)$ of the form \eqref{eq:H}, where $G$ is an infinite (i.e.~$\sharp V(G)=\infty$), $d$-bounded and connected graph.
Whenever $W$ is chosen to be identically zero, the operator is denoted by $\Delta$, which is called the {\em graph Laplacian}. Furthermore, $A=A_G$ denotes the {\em adjacency operator} on the graph $G$, which is a Schr\"odinger operator with $W(v):=deg(v)$ for $v\in V(G)$. A triple $(H,G,v_0)$ denotes a Schr\"odinger operator of the form \eqref{eq:H} defined on the rooted graph $(G,v_0)$.

\medskip

Let $H$ be a Schr\"odinger operator on a rooted graph $(G,v_0)$. The {\em spectrum} of $H$ is denoted by $\sigma(H)$. Then the {\em discrete spectrum} $\sigma_{disc}(H)\subseteq\sigma(H)$ is the set of isolated eigenvalues of finite multiplicity and the {\em essential spectrum} is $\sigma_{ess}(H):=\sigma(H)\setminus\sigma_{disc}(H)$.
Furthermore, a function $\psi:V(G)\to\mathbb{C}$ is called a {\em generalized eigenfunction of $H$ with eigenvalue $\lambda$} if $\psi\not\equiv 0$ and $H\psi(v) = \lambda \psi(v)$ for all $v\in V(G)$. With this at hand, $\sigma_\infty(H)$ denotes the set of all $\lambda$ such that there exists a bounded generalized eigenfunction $\psi\in\ell^\infty(G)$ with eigenvalue $\lambda$.

\medskip

The {\em combinatorial graph distance} on $G$ is defined by
$$
\text{dist}\left(u,v\right) := \inf\big\{ n \,\big|\, (v_0,v_1,\ldots,v_n) \text{ is a path with } v_0=u \text{ and } v_n=v \big\}.
$$
For a rooted graph $(G,v_0)$, the notation
\[
\left|v\right|:=\text{dist}\left(v,v_{0}\right)
\]
is used for the distance of a vertex $v\in V(G)$ from the root $v_0$. Then $\mathcal{S}_r(v_0)$ denotes the {\em sphere} of radius $r\in\mathbb{N}$ around the root $v_0$ and $B_r(v_0)$ is the {\em ball} of radius $r\in\mathbb{N}$ around the root $v_0$, namely,
$$
\mathcal{S}_r(v_0)
	:=\left\{v\in G\,|\, \text{dist}(v,v_0)=r\right\}, \qquad
B_r(v_0)
	:=\left\{v\in G\,|\, \text{dist}(v,v_0)\leq r\right\}.
$$
\begin{definition}
\label{def:subexp}
A connected rooted graph $(G,v_0)$ is of {\em sub-exponential growth rate} if for each $\gamma>1$, there exists $C=C_{\gamma,v_0}>0$ such that for every $r\in\mathbb{N}$,
\begin{equation*}
\sharp B_r(v_0)< C\gamma^r.
\end{equation*}
Furthermore, a graph $G$ is of {\em uniform sub-exponential growth rate} if the constant $C>0$ can be chosen independently of the choice of the root. Specifically, for each $\gamma>1$, there exists a constant $C=C_{\gamma}>0$ such that
\begin{equation*}
\sharp B_r(u)< C\gamma^r
\end{equation*}
holds for every $u\in G$ and $r\in\mathbb{N}$. 
\end{definition}

\medskip

The concept of $\mathcal{R}$-limits of a Schr\"odinger operator $H$ defined on a graph $G$ was recently introduced in \cite{BDE18}. A precise mathematical definition is provided in Section~\ref{ssect:R-limits}.

\begin{theorem}
\label{thm:sigmainfty} 
Let $\left(G,v_0\right)$ be an infinite and connected $d$-bounded graph of sub-exponential growth rate, and $H$ be a Schr\"odinger operator on
$\ell^{2}\left(G\right)$ of the form \eqref{eq:H}. Then
\begin{equation*}
\sigma_{\text{ess}}\left(H\right)
	\subseteq\bigcup_{H'\in\mathcal{R}(H)}\sigma_{\infty}\left(H'\right).
\end{equation*} 
\end{theorem}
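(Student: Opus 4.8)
The plan is to combine the converse Shnol-type theorem mentioned in the introduction with a renormalization argument that transports a sub-exponentially growing generalized eigenfunction of $H$ to a bounded generalized eigenfunction of an $\mathcal{R}$-limit. Let $\mu$ be a spectral measure of $H$ with $\operatorname{supp}\mu=\sigma(H)$. Since the right-hand side will be shown to be closed (see the last paragraph) and a set of full $\mu$-measure is dense in $\operatorname{supp}\mu\supseteq\sigma_{\text{ess}}(H)$, it suffices to verify membership in the right-hand side for the full-measure set of $\lambda\in\sigma(H)$ supplied by the converse theorem. For such $\lambda$ the converse Shnol-type theorem — whose hypotheses are met precisely because $(G,v_0)$ has sub-exponential growth, the volume bound converting the $\mu$-a.e.\ $\ell^2$-type estimate into a pointwise one — provides a generalized eigenfunction $\psi$ of $H$ with eigenvalue $\lambda$ of sub-exponential growth, i.e.\ $\tfrac1r\log m(r)\to0$, where $m(r):=\max_{v\in B_r(v_0)}|\psi(v)|$.

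Second, I would select good centres. Writing $\delta(r):=\log m(r+1)-\log m(r)\ge0$, sub-exponential growth of $\psi$ gives $\sum_{k<r}\delta(k)=o(r)$, so for every $\eta>0$ the set $\{k:\delta(k)\le\eta\}$ has density one, whence a counting argument over windows yields, for each fixed $\rho$ and $\eta$, infinitely many radii $r$ with $m(r+\rho)\le e^{\eta\rho}m(r)$. Diagonalizing over $\rho_n\to\infty$ and $\eta_n\to0$ with $\eta_n\rho_n\le1$ produces radii $r_n\to\infty$ and the constant $C:=e$ such that, choosing $w_n\in B_{r_n}(v_0)$ with $|\psi(w_n)|=m(r_n)$, one has
\[
\sup_{v\in B_{\rho_n}(w_n)}|\psi(v)|\le m(r_n+\rho_n)\le C\,|\psi(w_n)|.
\]
If $\psi$ is unbounded then $m(r_n)\to\infty$ forces $|w_n|\to\infty$, and the case $\limsup_{|v|\to\infty}|\psi(v)|>0$ is handled identically by taking $w_n\to\infty$ along which $|\psi(w_n)|$ approaches this $\limsup$, so that the ratio above tends to $1$. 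The one degenerate situation, a decaying ($\ell^2$) eigenfunction, corresponds to discrete-type points of $\sigma(H)$ and is recovered for $\sigma_{\text{ess}}(H)$ only through the closure step below.

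Third, I would pass to the $\mathcal{R}$-limit. After a subsequence the rooted graphs $(G,w_n)$ converge locally to a rooted limit graph carrying an $\mathcal{R}$-limit $H'$, with root-preserving isomorphisms $\phi_n\colon B_{\rho_n}(v_\infty)\to B_{\rho_n}(w_n)$ along which the potential converges. Set $\psi_n:=(\psi\circ\phi_n)/\psi(w_n)$. By the previous step $|\psi_n|\le C$ on balls of radius $\rho_n\to\infty$ while $|\psi_n(v_\infty)|=1$, so a diagonal subsequence converges pointwise to some $\psi'$ with $\|\psi'\|_\infty\le C$ and $|\psi'(v_\infty)|=1$; in particular $\psi'\in\ell^\infty(G')\setminus\{0\}$. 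Because $H$ acts locally via the nearest-neighbour form \eqref{eq:H} with uniformly bounded degree and the potentials converge along the $\mathcal{R}$-limit, the identity $(H-\lambda)\psi=0$ passes vertexwise to the limit, giving $(H'-\lambda)\psi'=0$ and hence $\lambda\in\sigma_\infty(H')$.

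Finally, to upgrade this to all of $\sigma_{\text{ess}}(H)$ I would prove that $\bigcup_{H'\in\mathcal{R}(H)}\sigma_\infty(H')$ is closed, resting on the compactness of $\mathcal{R}(H)$ and on the transitivity fact that an $\mathcal{R}$-limit of an $\mathcal{R}$-limit of $H$ is again an $\mathcal{R}$-limit of $H$: given $\lambda_k\to\lambda$ with bounded generalized eigenfunctions of $H'_k\in\mathcal{R}(H)$, one renormalizes and takes a joint local limit of the pairs to obtain a bounded generalized eigenfunction of some $H''\in\mathcal{R}(H)$ with eigenvalue $\lambda$. I expect the main obstacle to be exactly the boundedness of the limit $\psi'$: this is where sub-exponential growth is indispensable, being needed both to guarantee (via the converse theorem) that $\psi$ grows sub-exponentially and to produce, through the density-one window argument, centres at which the local supremum of $|\psi|$ is comparable to its central value — without such a growth bound the normalized functions could blow up on every ball and no bounded $\mathcal{R}$-limit eigenfunction would survive.
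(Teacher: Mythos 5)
The parts of your proposal that work all have close counterparts in the paper: the reverse Shnol input (Corollary~\ref{cor:decayGenEigenfct}), the window/renormalization argument turning a sub-exponentially growing, \emph{non-decaying} generalized eigenfunction into a bounded eigenfunction of an $\mathcal{R}$-limit (Propositions~\ref{prop:bounded} and~\ref{prop:unbounded}, fed into Proposition~\ref{prop:Rlimseq}), and the closedness of $\bigcup_{H'\in\mathcal{R}(H)}\sigma_\infty(H')$ via compactness and $\mathcal{R}(\mathcal{R}(H))\subseteq\mathcal{R}(H)$ (Corollary~\ref{cor:UnBoundSpec-Clos}). The genuine gap is the reduction in your first paragraph. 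If ``density of the $\mu$-full set plus closedness of the right-hand side'' sufficed, you would have proved the stronger inclusion $\sigma(H)\subseteq\bigcup_{H'\in\mathcal{R}(H)}\sigma_\infty(H')$, which is false: on $\mathbb{Z}$ a compactly supported potential producing a bound state has the free operator as its only $\mathcal{R}$-limit, and the discrete eigenvalue lies in no $\sigma_\infty(H')$. What your argument actually verifies is membership for the subset $\Lambda'$ of those $\lambda$ whose eigenfunction does not tend to zero at infinity: your centres $w_n$ are maximizers of $m(r)$, and for a decaying eigenfunction these stay in a fixed ball, so no $\mathcal{R}$-limit is produced. The closure step then yields only $\overline{\Lambda'}\subseteq\bigcup_{H'}\sigma_\infty(H')$, and nothing shows $\sigma_{\text{ess}}(H)\subseteq\overline{\Lambda'}$. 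In a localization regime --- pure point spectral measure with exponentially decaying eigenfunctions, perfectly compatible with sub-exponential (even $\mathbb{Z}$) geometry, where positivity of the Lyapunov exponent makes every sub-exponentially growing solution a multiple of the decaying one --- the set $\Lambda'$ your construction produces is empty while $\sigma_{\text{ess}}(H)$ is a whole interval. So the theorem remains unproved exactly in the pure-point case, which you cannot dismiss as ``discrete-type points.''

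That case is where the paper does its hardest work, using two ideas absent from your proposal. First, decay is measured by annulus maxima $q_{m,k}=\max_{m(k-1)\le|u|<mk}|\varphi(u)|$ rather than by ball maxima $m(r)$; Lemma~\ref{lem:bounded} then gives a dichotomy: either $\varphi$ decays \emph{exponentially}, or there are annuli tending to infinity on which the decay is locally controlled, which feeds Lemma~\ref{lem-q-Rlimit} and still produces centres going to infinity. Hence only exponentially decaying, i.e.\ genuinely $\ell^2$, eigenfunctions escape the direct argument. Second, for those the paper exploits orthogonality: distinct eigenvalues $\lambda_n\to\lambda$ (or an infinitely degenerate $\lambda$) give pairwise orthogonal normalized eigenfunctions $\psi_n$; if their sup-maximizers stayed in a fixed ball, weak convergence to zero would force $\|\psi_n\|_\infty\to0$, and Lemma~\ref{lem:lastlem} shows that uniform geometric decay of all annulus maxima would then give $\|\psi_n\|_2^2\le\widetilde{C}_k\|\psi_n\|_\infty\to0$, contradicting $\|\psi_n\|_2=1$; this is where sub-exponential growth of the \emph{graph} (not of the eigenfunction) enters, to make $\sum_r\sharp\mathcal{S}_r(v_0)\,2^{-r/k}$ converge. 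This contradiction forces the mass of the $\psi_n$ to spread out and supplies precisely the centres tending to infinity that your argument lacks.
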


We point out that in the latter assertion it is not assumed that the graph is of uniform sub-exponential growth. The inclusion is preserved also for the adjacency operator $H:=A_G$ on the $d$-regular tree $G:=T_d$, although this graph has exponential growth rate. 
In this case one can directly check that this inclusion is strict. Indeed, the only $\mathcal{R}$-limit is the same operator $H'=A_{T_d}$ on $T_d$. Then $\sigma_{\text{ess}}(H)=\sigma(H)=\left[-2\sqrt{d-1},2\sqrt{d-1}\right]$ and $[-d,d]\subset\sigma_{\infty}(H)$ holds (see e.g.\ \cite[Theorem~1.1]{Brooks} and \cite{FP83}).
Thus, we derive
\[
\left[-d,d\right]\big\backslash\left[-2\sqrt{d-1},2\sqrt{d-1}\right]\subset\sigma_{\infty}(H)\backslash\sigma_{\text{ess}}(H)
\] 
where the set on the left hand side is non-empty. With this idea at hand, we also prove the following.

\begin{prop}
\label{prop:strictinclusion}
There exists an infinite and connected $d$-bounded graph $G$ of sub-exponential growth rate so that the adjacency operator $H:=A_G$ satisfies
$$
\Bigg(\bigcup_{H'\in\mathcal{R}(H)} \sigma_{\infty}\left(H'\right) \Bigg) \Big\backslash\ \sigma_{\text ess}(H) \neq\emptyset.
$$
\end{prop}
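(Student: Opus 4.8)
The plan is to transplant the tree phenomenon from the remark preceding the statement into a graph of (even polynomial) sub-exponential growth: I will attach arbitrarily large finite pieces of the $d$-regular tree along a very sparse backbone, so that $A_{T_d}$ reappears as an $\mathcal{R}$-limit of $A_G$ while $\sigma_{\text{ess}}(A_G)$ stays confined to $[-2\sqrt{d-1},2\sqrt{d-1}]$. Fix $d\ge 3$ and let $G$ be the tree obtained from a backbone ray $v_0=0,1,2,\dots$ by attaching, at the backbone vertex $D_m:=2^m$, the root of a finite rooted tree $T^{(m)}$ in which the root and every internal vertex have exactly $d-1$ children, carried down to depth $n_m:=m$. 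Then $G$ is an infinite, connected, $d$-bounded tree: backbone vertices have degree at most $3\le d$, the root of each $T^{(m)}$ has degree $(d-1)+1=d$, interior vertices of the pieces have degree $d$, and leaves have degree $1$.

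First I would verify the growth rate. A vertex of $T^{(m)}$ at depth $t$ lies at distance $D_m+1+t$ from $v_0$, so $B_r(v_0)$ meets $T^{(m)}$ only once $2^m< r$, and then in at most $\sum_{t\le r-D_m-1}(d-1)^t\le C\,(d-1)^{\min(m,\,r-2^m)}$ vertices. Since $D_m=2^m\le r$ forces $m\le\log_2 r$, every such exponent is at most $\log_2 r$, whence $\sharp B_r(v_0)\le (r+1)+C(\log_2 r)\,(d-1)^{\log_2 r}$, which grows at most polynomially in $r$. Thus $(G,v_0)$ has sub-exponential (indeed polynomial) growth rate in the sense of Definition~\ref{def:subexp}, so Theorem~\ref{thm:sigmainfty} applies to $A_G$.

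Next I would exhibit $A_{T_d}$ as an $\mathcal{R}$-limit. Let $x_m$ be any vertex of $T^{(m)}$ at depth $\lfloor n_m/2\rfloor$. For a fixed radius $R$ and all $m$ with $\lfloor n_m/2\rfloor> R$, every vertex within distance $R$ of $x_m$ lies strictly between the root and the leaf level of $T^{(m)}$, hence has degree $d$, and the pointed ball $(B_R(x_m),x_m)$ is isomorphic to the radius-$R$ ball of $T_d$ about a vertex, with potential $W\equiv d$ throughout. Hence the pointed balls stabilize and (passing to a subsequence if the definition of $\mathcal{R}$-limit requires it) the corresponding rooted operators converge to the Schr\"odinger operator with $W\equiv d$ on $T_d$, i.e.\ $A_{T_d}\in\mathcal{R}(A_G)$. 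As recalled above, $[-d,d]\subset\sigma_\infty(A_{T_d})$ by \cite[Theorem~1.1]{Brooks} and \cite{FP83}, so $[-d,d]\subseteq\bigcup_{H'\in\mathcal{R}(A_G)}\sigma_\infty(H')$.

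It remains to confine $\sigma_{\text{ess}}(A_G)$ below $d$, and this is the step I expect to carry the real content. Because $G$ is a tree of maximal degree $d$, it embeds as a subtree of $T_d$; and because adjacency operators have nonnegative entries, their $\ell^2$-operator norm equals $\sup\{\langle f,A f\rangle: f\ge 0,\ \|f\|=1\}$ and is therefore monotone under passing to subgraphs. Consequently $\|A_G\|\le\|A_{T_d}\|=2\sqrt{d-1}$, so $\sigma_{\text{ess}}(A_G)\subseteq\sigma(A_G)\subseteq[-2\sqrt{d-1},2\sqrt{d-1}]$. Since $d\ge 3$ gives $2\sqrt{d-1}<d$, every $\lambda\in(2\sqrt{d-1},d]$ satisfies $\lambda\in\bigcup_{H'\in\mathcal{R}(A_G)}\sigma_\infty(H')$ while $\lambda\notin\sigma_{\text{ess}}(A_G)$, which proves the claim. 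The main obstacle is the tension between the two demands on $G$: the pieces $T^{(m)}$ must grow without bound so that the local limit is the full tree $T_d$ and $\sigma_\infty$ captures the whole interval $[-d,d]$, yet they must be spaced sparsely enough (here geometrically, $D_m=2^m$) that the exponential mass of the pieces is diluted to polynomial growth; the norm-monotonicity argument then does the remaining work of keeping $\sigma_{\text{ess}}(A_G)$ inside $[-2\sqrt{d-1},2\sqrt{d-1}]$.
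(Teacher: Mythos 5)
Your proposal is correct and follows essentially the same strategy as the paper's proof: both build a graph of sub-exponential growth by stringing arbitrarily large finite pieces of $T_d$ sparsely along a ray (the paper inserts balls $B_\ell(v_0)\subset T_d$ in-line at positions $d^{\ell+1}$, you attach pendant $(d-1)$-ary trees of depth $m$ at positions $2^m$), then observe that $A_{T_d}$ arises as an $\mathcal{R}$-limit so that Brooks' theorem gives $[-d,d]\subseteq\bigcup_{H'}\sigma_\infty(H')$, while the whole graph embeds as a subtree of $T_d$, confining $\sigma(A_G)$ to $[-2\sqrt{d-1},2\sqrt{d-1}]$. The only notable difference is that where the paper cites Mohar and Mohar--Woess for the subgraph spectral bound, you give a short self-contained quadratic-form monotonicity argument for adjacency operators with nonnegative entries, which is a valid and slightly more elementary substitute.
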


\medskip

The proof of Proposition~\ref{prop:strictinclusion} is constructive and the example is sub-exponentially growing but it does not admit uniform sub-exponential growth. If this is assumed then the following holds.

\begin{theorem}
\label{thm:fullresultsubexp}
Let $G$ be an infinite and connected, $d$-bounded graph of uniform sub-exponential growth rate and let $H$ be a bounded Schr\"odinger operator on $\ell^2(G)$. Then,
\begin{align}
\sigma_{\text ess}(H)
	=\bigcup_{H'\in\mathcal{R}(H)}\sigma(H')
	=\bigcup_{H'\in\mathcal{R}(H)}\sigma_\infty(H'). \nonumber
\end{align}

\end{theorem}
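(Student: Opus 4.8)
The plan is to prove the two displayed equalities simultaneously by establishing a cycle of three inclusions among the sets $\sigma_{\text{ess}}(H)$, $\bigcup_{H'\in\mathcal{R}(H)}\sigma(H')$ and $\bigcup_{H'\in\mathcal{R}(H)}\sigma_\infty(H')$, so that all three collapse onto one another. First I would invoke the inclusion
\[
\bigcup_{H'\in\mathcal{R}(H)}\sigma(H') \subseteq \sigma_{\text{ess}}(H),
\]
which was already established in \cite{BDE18} for bounded operators on arbitrary infinite, connected, $d$-bounded graphs and which therefore needs no hypothesis on the growth rate. Second, since uniform sub-exponential growth rate implies sub-exponential growth rate in the sense of Definition~\ref{def:subexp}, Theorem~\ref{thm:sigmainfty} applies and yields $\sigma_{\text{ess}}(H)\subseteq\bigcup_{H'\in\mathcal{R}(H)}\sigma_\infty(H')$. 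Combining the two gives the chain
\[
\bigcup_{H'\in\mathcal{R}(H)}\sigma(H') \subseteq \sigma_{\text{ess}}(H) \subseteq \bigcup_{H'\in\mathcal{R}(H)}\sigma_\infty(H').
\]

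To close the cycle it remains to prove $\bigcup_{H'\in\mathcal{R}(H)}\sigma_\infty(H')\subseteq\bigcup_{H'\in\mathcal{R}(H)}\sigma(H')$, and for this it is enough to verify $\sigma_\infty(H')\subseteq\sigma(H')$ for each individual $\mathcal{R}$-limit $H'$; note that the full per-operator identity $\sigma_\infty(H')=\sigma(H')$ is not needed, only this one inclusion. This is where the Shnol-type theorem enters: a bounded generalized eigenfunction is in particular polynomially (indeed sub-exponentially) bounded, so Shnol's theorem \cite{Shnol,SimEF81} forces the corresponding eigenvalue into $\sigma(H')$, \emph{provided} the graph $G'$ underlying $H'$ meets the growth hypothesis required by the Shnol-type theorem. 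The crucial step is therefore to show that every graph $G'$ carrying an $\mathcal{R}$-limit $H'$ of $H$ is again of sub-exponential growth rate. Here the uniformity of the assumption on $G$ is indispensable: an $\mathcal{R}$-limit is obtained as a local limit of the rooted graphs $(G,v_n)$ along a sequence with $|v_n|\to\infty$, so the ball $B_r(v_0')$ of the limit graph is the limit of the balls $B_r(v_n)$, whose centers escape to infinity. The uniform bound $\sharp B_r(u)<C_\gamma\gamma^r$ controls exactly these balls uniformly in $n$ and thus passes to the limit, giving $\sharp B_r(v_0')\le C_\gamma\gamma^r$; applying the same argument at every vertex of $G'$ (each the limit of vertices near $v_n$) shows that $G'$ again has uniform sub-exponential growth rate, which in particular provides the growth from its root needed to run Shnol on $H'$.

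The main obstacle is precisely this inheritance of the growth bound under passage to the $\mathcal{R}$-limit: one must make rigorous that the combinatorial structure of the balls converges along the defining sequence and that the uniform volume bound survives in the limit at every vertex of $G'$. I would isolate this as a separate lemma about $\mathcal{R}$-limit graphs. It is worth emphasizing that without uniformity only $B_r(v_0)$ in the original graph is controlled, while the limit graph is centered ``at infinity'' and may acquire exponential growth; this is exactly the mechanism behind the strict inclusion of Proposition~\ref{prop:strictinclusion} and behind the regular-tree example, where Shnol fails for the limit operator and $\sigma_\infty(H')$ is strictly larger than $\sigma(H')$. Once the growth-inheritance lemma is in hand, Shnol yields $\sigma_\infty(H')\subseteq\sigma(H')$ for each $H'$, hence $\bigcup_{H'}\sigma_\infty(H')\subseteq\bigcup_{H'}\sigma(H')$, and the three inclusions together force
\[
\sigma_{\text{ess}}(H)=\bigcup_{H'\in\mathcal{R}(H)}\sigma(H')=\bigcup_{H'\in\mathcal{R}(H)}\sigma_\infty(H'),
\]
completing the proof.
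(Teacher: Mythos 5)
Your proposal is correct and follows essentially the same route as the paper: the chain $\bigcup_{H'}\sigma(H')\subseteq\sigma_{\text{ess}}(H)\subseteq\bigcup_{H'}\sigma_\infty(H')$ from \cite{BDE18} and Theorem~\ref{thm:sigmainfty}, closed by proving $\sigma_\infty(H')\subseteq\sigma(H')$ for each $\mathcal{R}$-limit via a Shnol-type theorem (the paper uses Proposition~\ref{prop:HK4.8}, i.e.\ \cite[Theorem~4.8]{HK2011}). Your ``growth-inheritance lemma'' is exactly the paper's argument as well: balls around the limit root are isomorphic to balls $B_r(u)$ in $G$, so the root-independent bound $\sharp B_r(u)<C\gamma^r$ transfers to $G'$ and makes every bounded generalized eigenfunction sub-exponentially bounded.
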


As mentioned above, these equalities have been the subject of the previous works \cite{BDE18,E}, which are inspired by \cite{LSess06}. There the first equality is given on graphs of uniform polynomial growth rate \cite[Theorem~2.1]{E}, and on regular trees \cite[Theorem~4]{BDE18}. These results are complemented by \cite[Theorem~3]{BDE18}, by which the first equality is not satisfied in general. The proof of the latter statement includes an example for a graph on which the essential spectrum strictly includes the union over the spectra of the $\mathcal{R}$-limits. While there the growth rate of the graph is not sub-exponential, it can be adapted. Specifically, the construction is similar to the one in the proof of Proposition~\ref{prop:strictinclusion}.

\medskip

As remarked earlier, the equality $\sigma_{\text ess}(H)
	=\bigcup_{H'\in\mathcal{R}(H)}\sigma(H')$ coincides with a recent result from \cite{HaSe19}. The corresponding result there is obtained for metric spaces satisfying a certain set of assumptions. The assumption most relevant for us is known as Property~A \cite{Yu00}, and it is satisfied for graphs of uniform sub-exponential growth rate \cite{Tu01}. Note that the second equality in Theorem~\ref{thm:fullresultsubexp} is mainly part of the current work.

\medskip

In this context it is interesting to mention that, by \cite[Theorem~2.3]{E} Property A is not satisfied for so called uniform graphs of exponential growth rate. While the terminology ``Property A" does not appear there, this property in fact follows, and is related to the earlier argument by \cite{LSess06} for proving Theorem~\ref{thm:fullresultsubexp} for $G=\mathbb{Z}^d$. Moreover, the example mentioned above which is given in the proof of \cite[Theorem~3]{BDE18} is an example of a non-uniform graph on which Property A is not satisfied that can be adjusted to be of sub-exponential growth.

\section{Examples}
\label{sec:Exam}

The essential spectrum $\sigma_{\text ess}(H)$ is computed here for some examples by using Theorem~\ref{thm:fullresultsubexp}.

\subsection{Variations of \texorpdfstring{$\mathbb{Z}^n$}{Zn}}
The spectrum of an adjacency operator on a graph with bounded vertex degree by $2n$ admitting $\mathbb{Z}^n$ as an $\mathcal{R}$-limit is computed.

\begin{prop}
Let $1<n\in\mathbb{N}$ and let $G$ be a infinite graph of uniform sub-exponential growth such that $A_{\mathbb{Z}^n}$ appears as an $\mathcal{R}$-limit of the adjacency operator $A_G$. If the vertex degree of $G$ is bounded by $2n$ then
\[
\sigma(A_G)=\sigma_{\text ess}(A_G)=[-2n,2n].
\]
\end{prop}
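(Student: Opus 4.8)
The plan is to sandwich $\sigma(A_G)$ between $[-2n,2n]$ from below, using the $\mathcal{R}$-limit structure, and from above, using the degree bound, so that the three sets in the claim collapse to the single interval $[-2n,2n]$.

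First I would record the spectrum of the model operator. The adjacency operator $A_{\mathbb{Z}^n}$ on $\ell^2(\mathbb{Z}^n)$ is $\sum_{j=1}^n(S_j+S_j^{-1})$, where $S_j$ is the shift in the $j$-th coordinate direction; conjugating by the Fourier transform $\ell^2(\mathbb{Z}^n)\to L^2\big((-\pi,\pi]^n\big)$ turns it into multiplication by the symbol $\theta\mapsto 2\sum_{j=1}^n\cos\theta_j$. Since this symbol is continuous and attains every value in $[-2n,2n]$, one obtains $\sigma(A_{\mathbb{Z}^n})=[-2n,2n]$. Next comes the lower bound. By hypothesis $A_{\mathbb{Z}^n}\in\mathcal{R}(A_G)$, so its spectrum is one of the sets appearing in the union $\bigcup_{H'\in\mathcal{R}(A_G)}\sigma(H')$. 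Since $G$ is of uniform sub-exponential growth, Theorem~\ref{thm:fullresultsubexp} applies and identifies this union with $\sigma_{\text{ess}}(A_G)$; hence
$$[-2n,2n]=\sigma(A_{\mathbb{Z}^n})\subseteq\sigma_{\text{ess}}(A_G)\subseteq\sigma(A_G).$$

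For the upper bound I would use the degree constraint. The adjacency operator of any graph satisfies $\|A_G\|\le\sup_v\deg(v)$: writing $\langle\psi,A_G\psi\rangle=\sum_{u\sim v}\overline{\psi(v)}\psi(u)$ and applying the elementary inequality $2|\psi(u)|\,|\psi(v)|\le|\psi(u)|^2+|\psi(v)|^2$ edgewise yields $|\langle\psi,A_G\psi\rangle|\le\sum_v\deg(v)|\psi(v)|^2\le 2n\|\psi\|^2$. As $A_G$ is bounded and self-adjoint, this gives $\sigma(A_G)\subseteq[-\|A_G\|,\|A_G\|]\subseteq[-2n,2n]$.

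Combining the two inclusions produces $[-2n,2n]\subseteq\sigma_{\text{ess}}(A_G)\subseteq\sigma(A_G)\subseteq[-2n,2n]$, forcing the equalities $\sigma(A_G)=\sigma_{\text{ess}}(A_G)=[-2n,2n]$. There is no serious obstacle here: the entire content is in the sandwich, and the only computations are the classical Fourier description of $\sigma(A_{\mathbb{Z}^n})$ and the standard degree bound $\|A_G\|\le 2n$. The one point worth keeping an eye on is that the hypothesis $\deg(v)\le 2n$ pins the largest possible operator norm to exactly the top of the model spectrum $[-2n,2n]$, which is precisely the coincidence that lets the lower and upper inclusions meet.
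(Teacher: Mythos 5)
Your proof is correct and follows essentially the same route as the paper: the lower bound $[-2n,2n]=\sigma(A_{\mathbb{Z}^n})\subseteq\sigma_{\text{ess}}(A_G)$ via Theorem~\ref{thm:fullresultsubexp} and the $\mathcal{R}$-limit hypothesis, and the upper bound $\|A_G\|\le 2n$ from the degree constraint. The only cosmetic differences are that you verify $\sigma(A_{\mathbb{Z}^n})=[-2n,2n]$ by Fourier transform where the paper cites \cite{MW89}, and you bound the numerical range edgewise where the paper applies Cauchy--Schwarz to $\|A_Gf\|^2$; both estimates are equivalent for a self-adjoint operator.
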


\begin{proof}
A short computation invoking the Cauchy-Schwarz inequality leads to
$$
\Vert A_G f\Vert^2
	= \sum_{v\in G}\left(\sum_{u\sim v}f(u)\right)^2
	\leq 2n \sum_{v\in G}\left(\sum_{u\sim v}(f(u))^2\right)
	\leq(2n)^2 \sum_{u\in G}(f(u))^2,
$$
since in the last estimate each term is positive and appears at most $2n$ times in the total sum. Hence, the spectral radius $\rho(A_G)$ satisfies $\rho(A_G)\leq 2n$ implying $\sigma(A_G)\subseteq [-2n,2n]$.

\medskip

For the converse inclusion, Theorem~\ref{thm:fullresultsubexp} together with \cite[Eq.~(7.3)]{MW89} assert $[-2n,2n]=\sigma\left(A_{\mathbb{Z}^n}\right)\subseteq\sigma_{\text ess}(A_G)$ since $\mathbb{Z}^n$ is an $\mathcal{R}$-limit of $A_G$ and $G$ is of uniform sub-exponential growth.
\end{proof}

For any $n\in\mathbb{N}$ we shall construct a graph which we denote by $Z_{n\times n}$ and is an example for a graph of this family of variations of $\mathbb{Z}_n$.
The construction procedure is the following:

\begin{figure}[ht!]
\centering
\includegraphics[width=100mm]{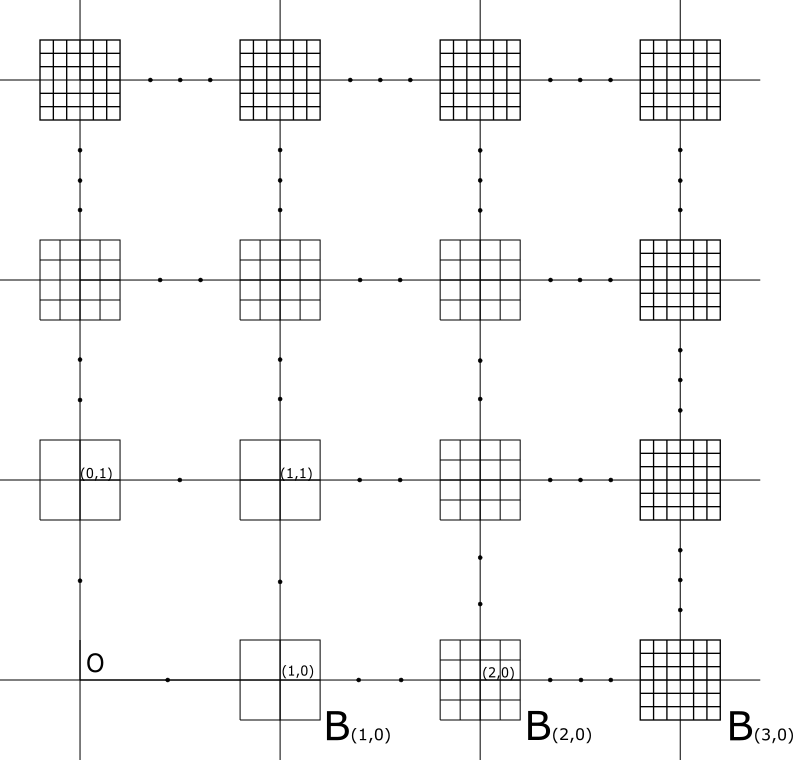}\\
\caption{The graph $Z_{2\times2}$. \label{figure:Z_2times2}}
\end{figure}

\begin{itemize}
\item Denote by $B^n_L$ the subgraph of $\mathbb{Z}^n$ which is the restriction to the box of side length $2L+1$, centred at $0$.
\item For each point $x\in\mathbb{Z}^n$ we shall associate the graph $B_x^n\equiv B_{\Vert x \Vert_\infty}^n$.
\item We connect each adjacent pair of boxes $B_x^n$ and $B_{x+e_j}^n$ by a line. The connection is done between the center points of the corresponding boundary surfaces and includes a sequence of vertices and edges of length $\max\left(\Vert x\Vert_\infty,\Vert x+e_j\Vert_\infty\right)$.
\end{itemize}
For example (a portion of) the graph $Z_{2\times 2}$ is drawn in Figure~\ref{figure:Z_2times2}.
We conclude from the argument above that
\[\sigma\left(A_{Z_{n\times n}}\right)=\sigma_{\text ess}\left(A_{Z_{n\times n}}\right)=[-2n,2n].\]

\subsection{Sparse trees with sparse cycles} \label{sec:example2}
In \cite{Bre07}, so called sparse spherically homogeneous rooted trees were studied. It was shown there that under suitable assumptions the spectrum is purely singular continuous. These graphs are adjusted here by adding from time to time a circle in the graph while preserving the spherical symmetry. Invoking the current result, we compute the spectrum. We only provide a short discussion of these graphs and we refer the reader interested in more details to \cite{E}.

\medskip

A rooted tree $(T,v_0)$ is called {\em spherically homogeneous} if each vertex $v$ is connected with $\kappa(|v|+1)$ vertices with distance $|v|+1$ from the root $v_0$. 
Let $\{L_n\}_{n\in\mathbb{N}}$ be a strictly increasing sequence with $L_n\in\mathbb{N}$ and $\{k_n\}_{n\in\mathbb{N}}$ be a bounded sequences with $k_n\in\mathbb{N}$ and $k_n>1$. Following \cite{Bre07}, a spherically homogeneous tree is called of type $\{L_n,k_n\}_{n\in\mathbb{N}}$ if $\kappa:\mathbb{N} \to \mathbb{N}$ is defined by
$$
\kappa(j) := \begin{cases}
		k_j,\qquad 	&j\in\{L_n\}_{n\in\mathbb{N}},\\
		1,\qquad	&\text{otherwise}.
	\end{cases}
$$
This graph is called {\em sparse} if $\lim_{n\to\infty} (L_{n+1}-L_n) = \infty$. 

\medskip

Suppose $(T,v_0)$ is a sparse spherically homogeneous tree of type $\{L_n,k_n\}_{n\in\mathbb{N}}$. Let $\{C_n\}_{n\in\mathbb{N}}$ be a sequence of natural numbers satisfying
$$
L_n \geq C_n \geq L_{n-1},\quad
\lim_{n\to\infty} (L_{n}-C_n) = \infty\quad
\text{and}\quad
\lim_{n\to\infty} (C_{n}-L_{n-1}) = \infty.
$$
With this at hand, the {\em sparse tree with sparse cycles} $(G,v_0)$ of type $\{L_n,k_n,C_n\}_{n\in\mathbb{N}}$ is defined based on the spherically homogeneous tree $(T,v_0)$ of type $\{L_n,k_n\}_{n\in\mathbb{N}}$ by adding edges for each $n\in\mathbb{N}$ between vertices in a sphere $\mathcal{S}_{C_n}(v_0)$ of $(T,v_0)$ in the following way: Let $\mathcal{S}_{C_n}(v_0) = \{u_1,\ldots, u_m\}$ be some fixed ordering of the vertices in the sphere of radius $C_n$ around $v_0$. Then we add the edges $(u_j,u_{j+1})$ for $1\leq j\leq m$ and the edge $(u_m,u_1)$. Specifically, each vertex in the sphere of radius $C_n$ in the graph $(G,v_0)$ is adjacent to exactly two other vertices in the sphere and we create a circle, see e.g.\ a sketch of such a graph in Figure~\ref{figure:sparsesym}.

\begin{figure}[ht!]
\centering
\includegraphics[width=110mm]{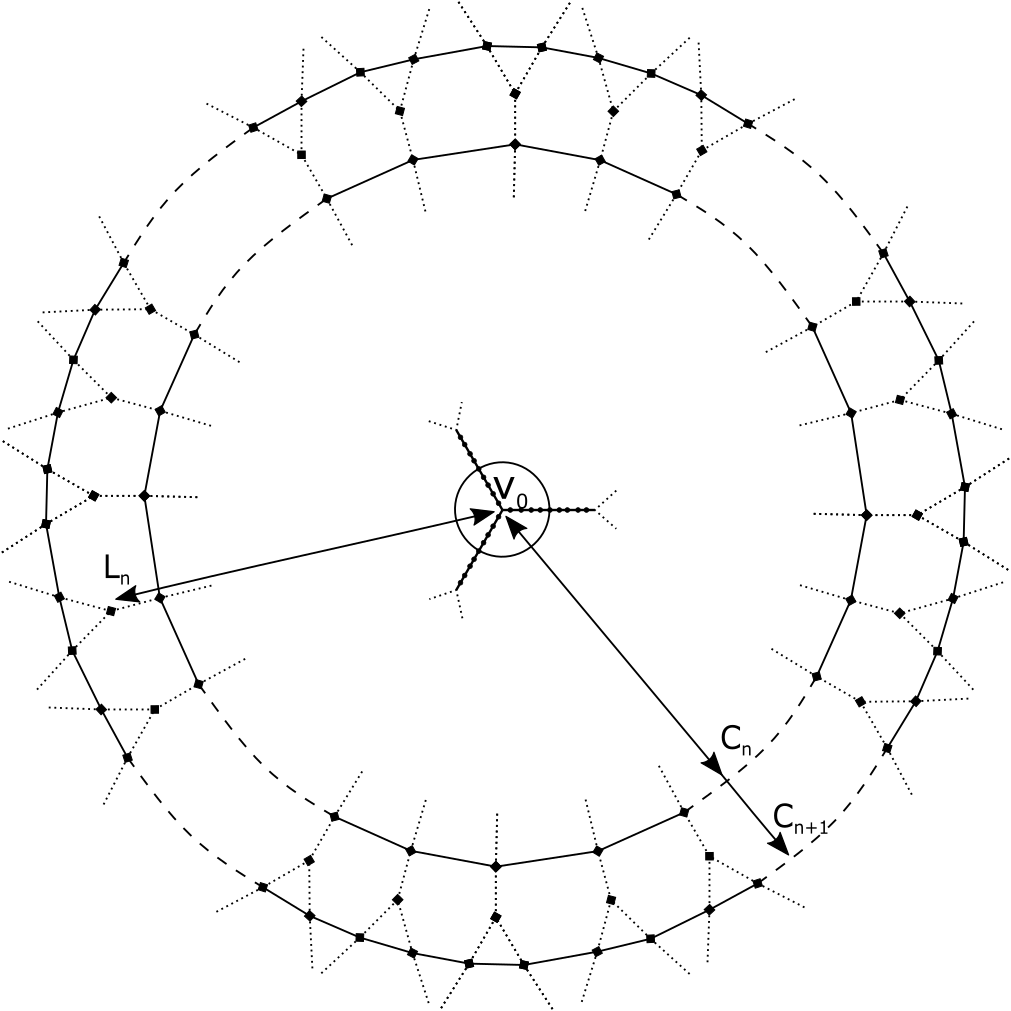}\\
\caption{A sketch of a sparse tree with sparse cycles. \label{figure:sparsesym}}
\end{figure}

In order to apply Theorem~\ref{thm:fullresultsubexp}, the sparse tree with sparse cycles need to be of uniform sub-exponential growth. If $\{L_n\}$ grows exponentially, then a direct computation shows that $(G,v_0)$ is of uniform sub-exponential growth (for instance choose $L_n=10^n$). Denote by $D\subseteq \mathbb{N}$ the set of all accumulation points of the sequence $\{k_n\}_{n\in\mathbb{N}}$, which by construction is finite. Then the possible $\mathcal{R}$-limits of $A$ are the adjacency operators on the following graphs, see Figure~\ref{figure:rlimsparsesym}:
\begin{itemize}
\item The line $\mathbb{Z}$.
\item A two sided infinite comb graph, denoted by $\mathcal{C}$ and defined by
\begin{eqnarray*}
V(\mathcal{C})=&&\left\{v=(k,\ell)\,\Big|\, k,\ell\in\mathbb{Z}\right\}, \\
E(\mathcal{C})=&&\left\{\Big((k,\ell),(k,\ell+1)\Big)\,\Big|\, k,\ell\in\mathbb{Z}\right\}\cup \left\{\Big((k,0),(k+1,0)\Big)\,\Big|\, k\in\mathbb{Z}\right\}.
\end{eqnarray*}
\item The set of star graphs $\{S_{m+1}\}_{m\in D}$, where the star graph denoted by $S_{m+1}$ is defined by m+1 copies of N glued together at $0$. 
\end{itemize}

\begin{figure}[ht!]
\centering
\includegraphics[width=100mm]{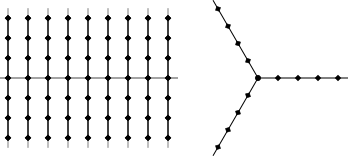}\\
\caption{Some of the $\mathcal{R}$-limits of the sparse tree with sparse cycles (besides $\mathbb{Z}$): the infinite comb graph (left) and the star graph $S_3$ (right). \label{figure:rlimsparsesym}}
\end{figure}

The spectrum of all of these graphs can be explicitly computed. The proofs are following standard ideas and are postponed to the appendix.

\begin{lemma} \label{lem:CGspectra} 
The equality $\sigma(A_{\mathcal{C}})=[-2\sqrt{2},2\sqrt{2}]$ holds.
\end{lemma}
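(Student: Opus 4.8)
The plan is to diagonalise $A_{\mathcal{C}}$ by exploiting the $\mathbb{Z}$-translation symmetry in the spine coordinate $k$ and to reduce the problem to a one-parameter family of rank-one perturbations of the free Jacobi operator on $\ell^2(\mathbb{Z})$. First I would introduce the partial Fourier (Floquet--Bloch) transform $U\colon\ell^2(\mathcal{C})\to\int_{[-\pi,\pi)}^{\oplus}\ell^2(\mathbb{Z})\,\tfrac{d\theta}{2\pi}$ acting formally by $(U\psi)(\theta,\ell)=\sum_{k\in\mathbb{Z}}e^{-ik\theta}\psi(k,\ell)$, where the fibre variable $\ell$ indexes the teeth. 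Because the vertical (tooth) edges preserve $k$ while the horizontal (spine) edges shift $k$ by $\pm1$ and occur only at $\ell=0$, a direct computation gives $UA_{\mathcal{C}}U^{-1}=\int_{[-\pi,\pi)}^{\oplus}A_\theta\,\tfrac{d\theta}{2\pi}$ with fibre operator
$$
(A_\theta\phi)(\ell)=\phi(\ell+1)+\phi(\ell-1)+2\cos(\theta)\,\delta_{\ell,0}\,\phi(0).
$$

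Next I recognise $A_\theta=J+2\cos(\theta)\,\langle\delta_0,\cdot\rangle\,\delta_0$ as a rank-one perturbation of the free adjacency operator $J$ on $\ell^2(\mathbb{Z})$, for which $\sigma(J)=\sigma_{\mathrm{ac}}(J)=[-2,2]$. By Weyl's theorem each fibre has essential spectrum exactly $[-2,2]$, so the only additional spectrum is a possible discrete eigenvalue outside $[-2,2]$ created by the perturbation. For $|E|>2$ such an eigenvalue is characterised by the rank-one (Aronszajn--Krein) equation $1+2\cos(\theta)\,G(E)=0$, where $G(E)=\langle\delta_0,(J-E)^{-1}\delta_0\rangle$. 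Solving $(J-E)G=\delta_0$ with the decaying ansatz $G(n)=A\mu^{|n|}$, $\mu+\mu^{-1}=E$, $|\mu|<1$, yields $G(E)=-\operatorname{sgn}(E)/\sqrt{E^2-4}$; substituting and squaring gives $E^2=4(1+\cos^2\theta)$, while the sign constraint $2\cos(\theta)\operatorname{sgn}(E)>0$ forces a single eigenvalue per fibre, namely $E(\theta)=\operatorname{sgn}(\cos\theta)\,2\sqrt{1+\cos^2\theta}$ whenever $\cos\theta\neq0$ (and no eigenvalue when $\cos\theta=0$).

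Finally I would assemble $\sigma(A_{\mathcal{C}})$ from the fibres via the standard direct-integral description
$$
\sigma(A_{\mathcal{C}})=\Big\{E\in\mathbb{R} : \big|\{\theta : \sigma(A_\theta)\cap(E-\varepsilon,E+\varepsilon)\neq\emptyset\}\big|>0 \text{ for every } \varepsilon>0\Big\},
$$
where $|\cdot|$ is Lebesgue measure on $[-\pi,\pi)$. Every fibre contributes the band $[-2,2]$. As $\theta$ varies, the continuous, locally non-constant map $\theta\mapsto E(\theta)$ sweeps the positive values through $(2,2\sqrt{2}\,]$ (maximal at $\theta=0$) and the negative values through $[-2\sqrt{2},-2)$; non-constancy guarantees that each such value meets $\sigma(A_\theta)$ on a positive-measure set of $\theta$, so the whole eigenvalue range enters the spectrum. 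Taking the union with the bands and closing up gives $\sigma(A_{\mathcal{C}})=[-2\sqrt{2},2\sqrt{2}]$.

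The Floquet decomposition and the Green-function computation are routine. \emph{The main obstacle} is the careful bookkeeping at the perturbation step: choosing the branch of $\sqrt{E^2-4}$ so that $G(\cdot)\in\ell^2$ and the sign condition selects exactly one eigenvalue per fibre rather than two, and then verifying in the direct-integral sense that the one-parameter eigenvalue branches genuinely fill the closed intervals up to $\pm2\sqrt{2}$ — in particular that the endpoints $\pm2\sqrt{2}$, attained only at $\theta=0,\pi$, survive because $E(\theta)$ is locally non-constant there.
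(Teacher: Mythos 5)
Your proposal is correct and follows essentially the same route as the paper: Fourier (Floquet--Bloch) decomposition over the spine variable, identification of each fibre $A_\theta = A_{\mathbb{Z}} + 2\cos\theta\,\langle\delta_0,\cdot\rangle\delta_0$ as a rank-one perturbation, the Aronszajn--Krein/Green-function condition yielding the eigenvalue branches $\pm 2\sqrt{1+\cos^2\theta}$, and assembly of the spectrum as the union of $[-2,2]$ with the closure of these branches. Your version is in fact slightly more careful than the paper's on two routine points -- the explicit sign bookkeeping showing exactly one eigenvalue per fibre with $E(\theta)=\operatorname{sgn}(\cos\theta)\,2\sqrt{1+\cos^2\theta}$, and the use of the standard measure-theoretic description of direct-integral spectra in place of the paper's appeal to approximate eigenfunctions and continuity in $\theta$ -- but these are refinements of the same argument, not a different one.
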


\begin{lemma} \label{lem:SGspectra} 
For $m\in\mathbb{N}$ with $m>1$ we have $\sigma\left(A_{S_m}\right)=[-2,2]\cup\left\{-\frac{m}{\sqrt{m-1}},\frac{m}{\sqrt{m-1}}\right\}$.
\end{lemma}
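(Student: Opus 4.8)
The plan is to exploit the permutation symmetry of the $m$ arms of $S_m$ to split $A_{S_m}$ into a direct sum of half-line Jacobi operators and then read off the spectrum as a set. First I fix coordinates: label the central vertex by $0$ and write each non-central vertex as $(i,n)$ with arm index $i\in\{1,\dots,m\}$ and position $n\ge1$, so that $0\sim(i,1)$ and $(i,n)\sim(i,n+1)$. The group permuting the arms acts unitarily on $\ell^2(S_m)$ and commutes with $A_{S_m}$, so I decompose $\ell^2(S_m)=\Hh_{\mathrm{sym}}\oplus\Hh_{\mathrm{sym}}^{\perp}$, where $\Hh_{\mathrm{sym}}$ consists of the \emph{radial} functions (constant in $i$ for each fixed $n$) and $\Hh_{\mathrm{sym}}^{\perp}$ consists of functions with $\psi(0)=0$ and $\sum_i\psi(i,n)=0$ for every $n$. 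Both subspaces reduce $A_{S_m}$.

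On $\Hh_{\mathrm{sym}}^{\perp}$ the center decouples because $\psi(0)=0$, and on each arm $A_{S_m}$ acts as the free adjacency operator on the half-line $\{1,2,\dots\}$ with a Dirichlet condition at $0$; this is the free Jacobi matrix with zero diagonal and unit off-diagonal entries, whose spectrum is the purely absolutely continuous band $[-2,2]$. Hence the contribution of $\Hh_{\mathrm{sym}}^{\perp}$ to $\sigma(A_{S_m})$ is exactly $[-2,2]$.

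On $\Hh_{\mathrm{sym}}$ I pass to $\phi(n):=\psi(i,n)$ and rescale by $\tilde\phi(0)=\phi(0)$ and $\tilde\phi(n)=\sqrt m\,\phi(n)$ for $n\ge1$, which turns the weighted norm $|\phi(0)|^2+m\sum_{n\ge1}|\phi(n)|^2$ into the standard one. The operator becomes unitarily equivalent to the Jacobi matrix $J$ on $\ell^2(\mathbb{N}_0)$ with zero diagonal, first off-diagonal entry $\sqrt m$, and all further off-diagonal entries equal to $1$. Since $J$ differs from the free Jacobi matrix by a finite-rank (hence compact) perturbation, $\sigma_{\mathrm{ess}}(J)=[-2,2]$, and it remains to locate the eigenvalues outside this band. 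I use the decaying ansatz $\tilde\phi(n)=Cz^n$ ($n\ge1$, $|z|<1$) with $z+z^{-1}=\lambda$, which automatically solves the constant-coefficient recurrence for $n\ge2$. Substituting into the two boundary relations $\sqrt m\,\tilde\phi(1)=\lambda\tilde\phi(0)$ and $\sqrt m\,\tilde\phi(0)+\tilde\phi(2)=\lambda\tilde\phi(1)$ gives $z=\lambda-m/\lambda$ together with $z=\lambda/m$; eliminating $z$ yields $\lambda^2=m^2/(m-1)$, i.e.\ $\lambda=\pm\frac{m}{\sqrt{m-1}}$, with $|z|=1/\sqrt{m-1}<1$ precisely for $m\ge3$ (for $m=2$ one has $|z|=1$ and $\pm\frac{m}{\sqrt{m-1}}=\pm2$ already lies in the band, consistent with $S_2=\mathbb{Z}$). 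The identity $\big(\tfrac{m}{\sqrt{m-1}}\big)^2-4=\tfrac{(m-2)^2}{m-1}\ge0$ confirms these points lie outside $(-2,2)$.

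Assembling the two reducing subspaces gives $\sigma(A_{S_m})=[-2,2]\cup\{\pm\frac{m}{\sqrt{m-1}}\}$; note that embedded spectrum of $J$ inside $(-2,2)$, if any, does not affect the set since the band is already filled by $\Hh_{\mathrm{sym}}^{\perp}$. The only delicate points are getting the weighting in the radial reduction right so that $J$ is genuinely in standard Jacobi form, and checking that the exponential solution is square-summable; both are routine once the coordinates are fixed. The main conceptual step is the symmetry decomposition, which cleanly isolates the band produced by the arms from the two eigenvalues generated by the enhanced coupling $\sqrt m$ at the center.
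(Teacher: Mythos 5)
Your proof is correct, and its skeleton matches the paper's: both reduce $A_{S_m}$ to a direct sum of one half-line Jacobi matrix with modified first coupling $\sqrt{m}$ (your radial operator $J$, the paper's $J_0$) and $(m-1)$ copies of $A_{\mathbb{N}}$, and both identify $\sigma_{\mathrm{ess}}(J)=[-2,2]$ by a finite-rank perturbation argument. The differences are in how the two key steps are executed. For the decomposition, the paper simply cites \cite[Theorem~2.4]{Bre07} on spherically homogeneous trees, whereas you derive it by hand from the arm-permutation symmetry; this makes your argument self-contained. For the point spectrum of $J$, the paper uses coefficient stripping and the Weyl $m$-function: since $\delta_1$ is cyclic for $J_0$, any spectrum off $[-2,2]$ sits exactly at the zeros of $z+m\,m_{\mathbb{N}}(z)$, which are $\pm m/\sqrt{m-1}$. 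You instead match the decaying solution $Cz^n$, $z+z^{-1}=\lambda$, $|z|<1$, against the two boundary equations. The two computations are equivalent (a zero of $z+m\,m_{\mathbb{N}}(z)$ is precisely where the decaying solution is compatible with the boundary condition), but yours is more elementary and avoids the $m$-function machinery. The one point you leave implicit is exhaustiveness --- that an $\ell^2$ eigenfunction of $J$ at $\lambda\notin[-2,2]$ must be a multiple of the decaying solution, because the solution space of the recurrence is spanned by $z^{n}$ and $z^{-n}$ and only the former is square-summable; the paper obtains the corresponding ``nothing else'' statement from the cyclicity of $\delta_1$. Your explicit treatment of $m=2$, where $\pm m/\sqrt{m-1}=\pm 2$ lands on the band edge (consistent with $S_2=\mathbb{Z}$), is a check the paper handles only implicitly.
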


Thus, Theorem~\ref{thm:fullresultsubexp} implies that the essential spectrum of the adjacency operator associated with the sparse tree with sparse cycles $G$ of uniform sub-exponential growth is given by
\[
\sigma_{\text ess}(A_G)=[-2\sqrt{2},2\sqrt{2}] \cup \bigcup_{m\in D} \left\{-\frac{m+1}{\sqrt{m}},\frac{m+1}{\sqrt{m}}\right\}.
\]

\section{Characterizing \texorpdfstring{$\sigma_{\text ess}(H)$}{sigma-ess(H)} using generalized eigenfunctions}
\label{sec:CharEssSpec}

The section is devoted to the proof of Theorem~\ref{thm:sigmainfty}. The first part includes a more detailed review on $\mathcal{R}$-limits, and several related properties which we develop and use in this paper. The next three parts include the main tools which we use to show the existence of generalized eigenfunctions. The last part of this section is the actual proof of the theorem.

\subsection{\texorpdfstring{$\mathcal{R}$}{R}-limits}
\label{ssect:R-limits}
If $\eta:A\to B$ is a bijective map on two finite sets $A$ and $B$, denote by {$\mathcal{I}_\eta:\ell^2(A)\cong\mathbb{C}^{\sharp A}\to\ell^2(B)\cong\mathbb{C}^{\sharp B}$} the isomorphism defined via $\mathcal{I}_\eta(\delta_a):=\delta_{\eta(a)}$. Such maps will be mainly used for sets $A$ and $B$ that are balls in different graphs. Specifically, let $(G,v_0)$ be an infinite, connected, rooted $d$-bounded graph. 
Since we assume that the graph $G$ admits a uniform bound on the vertex degree, each ball $B_r(v)$ is finite (for any $v\in V(G)$, $r\in\mathbb{N}$). Throughout this work, $B_r(v)$ defines a subgraph of $G$ by restricting the edge set only to those that connect to vertices in $B_r(v)$. For the sake of simplifying the notation, this induced subgraph is also denoted by $B_r(v)$. 

\medskip

Recall that a bijective map $\phi:V(G)\to V(G')$ between two graphs $G$ and $G'$ (finite or infinite) is called a {\em graph isomorphism} if the induced map $\phi_E:E(G)\to E(G'),\, (u,v)\mapsto \big(\phi(u),\phi(v)\big)$ is also bijective. Then two graphs $G$ and $G'$ are {\em isomorphic ($G\sim G'$)} if there exists a graph isomorphism between them. 
Clearly, $deg(v)=deg(\phi(v))$ holds for all $v\in V(G)$ where $\phi$ denotes the graph isomorphism. Let $G,G'$ be two graphs. If there is an isomorphism between them, then $G$ is a connected $d$-bounded graph if and only if $G'$ is a connected $d$-bounded graph. 
We say that two balls $B_r(u)$ and $B_r(u')$ for $u\in V(G)$ and $u'\in V(G')$ are {\em isomorphic ($B_r(u)\sim B_r(u')$)} if the corresponding subgraph $B_r(u)$ is isomorphic to the subgraph $B_r(u')$. 

\medskip

Define the projection
$$
P_{v_0,r}:\ell^2(G)\to \ell^2\big(B_r(v_0)\big)\,,\quad
\big(P_{v_0,r}\psi\big)(v):=\chi_{B_r(v_0)}(v)\psi(v)\,,
$$ 
where $\chi_{B_r(v_0)}$ is the characteristic function of the ball $B_r(v_0)$. Note furthermore that $\ell^2\big(B_r(v_0)\big)$ is naturally embedded into $\ell^2(G)$ by extending a function with zeros. In the following, we will not distinguish between the finite dimensional space $\ell^2\big(B_r(v_0)\big)$ and its embedding in $\ell^2(G)$. For a Schr\"odinger operator $H$ on the rooted graph $(G,v_0)$ denote by $H_{v_0,r}:\ell^2(B_r(v_0))\to\ell^2(B_r(v_0))$ the operator
\[
H_{v_0,r} := P_{v_0,r} H P_{v_0,r}.
\]
It is worth pointing out that $H_{v_0,r}$ can be represented as a matrix acting on $\mathbb{C}^{\sharp B_r(v_0)}$. With this notion at hand, $\psi:V(G)\to\mathbb{C}$ is a generalized eigenfunction of $H$ with eigenvalue $\lambda$ if and only if $P_{v_0,r}H\psi = \lambda P_{v_0,r}\psi$ for all $r\in\mathbb{N}$.

\medskip

Recall that a sequence of vertices $\{v_n\}_{n\in\mathbb{N}}$ {\em goes to infinity} (or {\em converges to infinity}) if it leaves any finite subset of $V(G)$, or equivalently, if $\lim_{n\to\infty} \text{dist}(v_0,v_n)=\infty$. Throughout this paper we will usually assume, without loss of generalty, that the convergence is monotonically. Let $(G,v_0)$ and $(G',v'_0)$ be two rooted graphs. A sequence of maps $f_r:B_r(v_0)\to B_r(v')$ for $r\in\mathbb{N}$ is called {\em coherent (for $v_0$)} if for $s>r$, the restriction of the map $f_s$ to $B_r(v_0)$ equals $f_r$, namely
$$
f_s(u)=f_r(u)\,,\qquad u\in B_r(v_0).
$$

\begin{definition} 
\label{def:Hgraphsconv}
Let $H_n$ be a sequence of Schr\"odinger operators on the connected rooted $d$-bounded graphs $(G_n,v_n)$ for $n\in\mathbb{N}$ and let $H'$ be a Schr\"odinger operator on the connected rooted graph $(G',v'_0)$. Then the sequence $\{(H_n,G_n,v_n)\}_{n\in\mathbb{N}}$ is called {\em convergent to $(H',G',v'_0)$} if the following holds
\begin{itemize}
\item[(C1)] There are coherent maps $\left\{f_{n,r}:B_r(v_n)\to B_r(v_0')\right\}_{r\in\mathbb{N}}$ for each $n\in\mathbb{N}$, such that for every $r\in\mathbb{N}$, there exists an $N_r\in\mathbb{N}$ satisfying that $f_{n,r}:B_r(v_n)\to B_r(v_0')$ is a graph isomorphism for all $n\geq N_r$.
\item[(C2)] For each $r\in\mathbb{N}$,
$$
\lim_{n\to\infty} \big\| \mathcal{I}_{f_{n,r}} P_{v_n,r} H_n P_{v_n,r}\mathcal{I}_{f_{n,r}}^{-1} - H'_{v'_0,r}\big\| = 0.
$$
\end{itemize}
\end{definition}

Note that we require in the latter definition that the maps $f_{n,r}$ (in $n\in\mathbb{N}$) are eventually graph isomorphism between the balls $B_r(v_n)$ and $B_r(v'_0)$. However, if $n<N_r$ these maps are not necessarily graph isomorphisms. Thus, for $n\geq N_r$, the map $\mathcal{I}_{f_{n,r}}:\ell^2(B_r(v_n))\to\ell^2(B_r(v_0'))$ is an isomorphism and so $\mathcal{I}_{f_{n,r}}^{-1}$ is well-defined. If only (C1) holds, we call the sequence of connected, rooted $d$-bounded graphs $\{(G_n,v_n)\}_{n\in\mathbb{N}}$ {\em convergent} to the connected, rooted $d$-bounded graph $(G',v'_0)$. 
Additionally note that in Definition~\ref{def:Hgraphsconv} one can replace (C1) with the requirement that there exist maps $f_{n,r}:B_r(v_n)\to B_r(v_0')$ that are eventually bijective. In this case by (C2) the maps $f_{n,r}$ are eventually graph isomorphism.

\begin{definition} \label{def:Rlimit}
Let $H$ be a Schr\"odinger operator on a graph $G$. Then a Schr\"odinger operator $H'$ on a graph $G'$ is called an {\em $\mathcal{R}$-limit of $H$} if there exist a vertex $v'_0\in V(G')$ and a sequence of vertices $v_n\in V(G)$ that montonically converges to infinity such that
$\{(H,G,v_n)\}_{n\in\mathbb{N}}$ converges to $(H',G',v'_0)$.
\end{definition}

Whenever it is necessary to specify the coherent maps, we say that $(H',G',v'_0)$ is an $\mathcal{R}$-limit with respect to the coherent maps $\left\{f_{n,r}:B_r(v_n)\to B_r(v_0')\right\}_{r,n\in\mathbb{N}}$. 

\medskip

Notice that the above definition for $\mathcal{R}$-limits is equivalent to the definition given in \cite{BDE18,E}. Nevertheless the definition is presented here slightly different in order to relate it to the more general notion of convergence of a sequence of Schr\"odinger operators, introduced in Definition~\ref{def:Hgraphsconv}, which will be useful for us in this paper. 

\medskip

We start with some observation that will be helpful. They are inspired by previous considerations on $\mathbb{N}$ \cite{SimSz11}. 

\medskip

Let $G$ be a graph and $\psi:V(G)\to\mathbb{C}$ be a map. Then the {\em support of $\psi$} is defined by $\text{supp}(\psi):=\{u\in V(G)\,:\, \psi(u)\neq 0\}$. Denote by $\mathcal{C}_c(G)$ the set of all $\psi:V(G)\to\mathbb{C}$ such that $\text{supp}(\psi)$ is finite. Clearly, $\mathcal{C}_c(G)\subseteq\ell^2(G)$ holds.

\begin{lemma}
\label{lem:RlimProp}
Let $\{(G_n,v_n)\}_{n\in\mathbb{N}}$ be a sequence of  infinite, connected, rooted and $d$-bounded graphs. Assume that $\{(H_n,G_n,v_n)\}_{n\in\mathbb{N}}$ converges to $(H',G',v'_0)$ and satisfies $\sup_{n\in\mathbb{N}}\|H_n\|\leq C$ for some $C>0$. Then $(G',v'_0)$ is an infinite, connected rooted $d$-bounded graph, $H'$ is a Schr\"odinger operator on $(G',v'_0)$ of the form \eqref{eq:H} and $\|H'\| \leq 2C$ holds.
\end{lemma}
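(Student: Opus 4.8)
The plan is to verify each of the four claimed conclusions about the limit $(H',G',v'_0)$ in turn, deducing them from the convergence hypotheses (C1) and (C2) together with the uniform bound $\sup_n\|H_n\|\le C$. The key observation throughout is that properties of $G'$ are detectable on finite balls $B_r(v'_0)$, and (C1) tells us that for $n\ge N_r$ these balls are graph-isomorphic to $B_r(v_n)\subseteq G_n$; hence local combinatorial data transfer from the $G_n$ to $G'$.

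\textbf{Boundedness of the degree and local structure.} First I would establish that $(G',v'_0)$ is $d$-bounded. Fix a vertex $w\in V(G')$ and choose $r=|w|+1$ so that the entire neighbourhood of $w$ lies in $B_r(v'_0)$. By (C1) there is $N_r$ with $f_{n,r}:B_r(v_n)\to B_r(v'_0)$ a graph isomorphism for $n\ge N_r$; since graph isomorphisms preserve vertex degree and $\deg_{G_n}\le d$, we get $\deg_{G'}(w)=\deg_{B_r(v'_0)}(w)\le d$. (One must take $r$ large enough that no neighbour of $w$ sits on the sphere $\mathcal{S}_r(v'_0)$, where edges might be truncated by the ball restriction; choosing $r>|w|$ handles this.) So $G'$ is $d$-bounded.

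\textbf{Infiniteness and connectedness.} For connectedness, any $w\in V(G')$ lies in some $B_r(v'_0)$; the isomorphism $f_{n,r}$ carries the connected rooted ball $B_r(v_n)$ (balls around a root are always connected) to $B_r(v'_0)$, so $w$ is joined to $v'_0$ by a path in $G'$, giving connectedness. For infiniteness I would argue that $\sharp B_r(v'_0)=\sharp B_r(v_n)$ for $n\ge N_r$, and since each $G_n$ is infinite and connected with degree $\le d$, the ball sizes $\sharp B_r(v_n)$ must tend to infinity with $r$ (an infinite connected graph has spheres of arbitrarily large radius that are non-empty); hence $\sharp B_r(v'_0)\to\infty$ and $G'$ is infinite.

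\textbf{Form of $H'$ and the norm bound.} To see that $H'$ has the Schr\"odinger form \eqref{eq:H}, I would read off from (C2) that the matrix entries of $H'_{v'_0,r}$ are limits of the correspondingly transported entries of $H_n$; since each $H_n$ has the form \eqref{eq:H}, its off-diagonal entries are $1$ on edges and $0$ otherwise (matching the adjacency structure, which is preserved by the isomorphisms $f_{n,r}$), and its diagonal entries are $-\deg(v)+W_n(v)$. Passing to the limit, the off-diagonal part of $H'$ agrees with the graph Laplacian's adjacency part on $G'$, and the diagonal limits define a bounded potential $W':V(G')\to\mathbb{R}$; the uniform bound $\|H_n\|\le C$ forces these diagonal limits to stay bounded, so $W'$ is well-defined and bounded and $H'$ genuinely has the form \eqref{eq:H}. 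Finally, for the norm estimate I would fix $\psi\in\mathcal{C}_c(G')$ with $\mathrm{supp}(\psi)\subseteq B_r(v'_0)$; then $H'\psi$ is supported in $B_{r+1}(v'_0)$, so for $s\ge r+1$ we have $\|H'\psi\|=\|H'_{v'_0,s}\psi\|$ up to boundary terms, and by (C2) this is the limit of $\|\mathcal{I}_{f_{n,s}}P_{v_n,s}H_nP_{v_n,s}\mathcal{I}_{f_{n,s}}^{-1}\psi\|\le \|H_n\|\,\|\psi\|\le C\|\psi\|$ — the factor $2$ arising because the compression $P_{v_0,s}HP_{v_0,s}$ of $H'$ differs from $H'$ on the boundary sphere, and estimating that discrepancy (each truncated term contributes at most one extra copy of the $\psi$-values) costs a second factor of $C$. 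By density of $\mathcal{C}_c(G')$ in $\ell^2(G')$ this gives $\|H'\|\le 2C$.

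\textbf{Main obstacle.} The step I expect to require the most care is the norm bound $\|H'\|\le 2C$, specifically the handling of the boundary sphere $\mathcal{S}_s(v'_0)$: the compressed operators $H'_{v'_0,s}$ that appear in (C2) are not equal to $H'$ restricted to $B_s(v'_0)$, because $H'$ couples vertices on $\mathcal{S}_s$ to vertices on $\mathcal{S}_{s+1}$ which the projection $P_{v'_0,s}$ kills. Controlling this mismatch — and seeing precisely where the loss of the factor $2$ comes from, rather than equality $\|H'\|\le C$ — is the technical heart of the argument; the remaining conclusions about $G'$ being infinite, connected, and $d$-bounded are comparatively routine transfers of finite-ball data through the isomorphisms $f_{n,r}$.
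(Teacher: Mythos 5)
Your proposal follows essentially the same route as the paper: the graph properties and the form \eqref{eq:H} are transferred through the eventual ball isomorphisms of (C1) and the entrywise convergence of (C2), and the norm bound is obtained by testing $H'$ on finitely supported functions whose support sits strictly inside the compression ball. Those parts are correct, and more detailed than the paper's one-line treatment of the graph statements (note only that your connectedness argument tacitly uses that every $w\in V(G')$ has finite distance to $v'_0$, which is connectedness itself; this is harmless here because Definition~\ref{def:Hgraphsconv} already posits $G'$ connected).

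The step you flag as the ``technical heart'', however, is a phantom, and your explanation of the factor $2$ is wrong. With $\mathrm{supp}(\psi)\subseteq B_r(v'_0)$ and $s\ge r+1$ there is no boundary mismatch at all: $P_{v'_0,s}\psi=\psi$, and since $H'$ has the local form \eqref{eq:H}, the function $H'\psi$ is supported in $B_{r+1}(v'_0)\subseteq B_s(v'_0)$, so
\[
H'_{v'_0,s}\psi=P_{v'_0,s}H'P_{v'_0,s}\psi=P_{v'_0,s}\bigl(H'\psi\bigr)=H'\psi
\]
exactly; this buffer is precisely what the paper's parenthetical ``$r-1$ is important here'' refers to. Writing $L_{n,s}:=\mathcal{I}_{f_{n,s}}P_{v_n,s}H_nP_{v_n,s}\mathcal{I}_{f_{n,s}}^{-1}$, your limit argument then already finishes the proof, and with the sharper constant: $\|H'\psi\|=\lim_{n\to\infty}\|L_{n,s}\psi\|\le C\|\psi\|$, since $\mathcal{I}_{f_{n,s}}$ is unitary and $\|P_{v_n,s}\|\le 1$; hence $\|H'\|\le C\le 2C$. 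Had there genuinely been truncated boundary terms to control, your claim that they ``cost a second factor of $C$'' would be unjustified: the killed terms involve at most $d$ neighbours per boundary vertex, so a direct estimate produces a constant of order $d$, and $d\le C$ is not among the hypotheses. The factor $2$ in the paper has a more mundane origin: instead of letting $n\to\infty$, a single $n_0$ is fixed for which the error in (C2) is merely $<C$, and the triangle inequality $\|H'_{v'_0,r}\psi\|\le\|(H'_{v'_0,r}-L_{n_0,r})\psi\|+\|L_{n_0,r}\psi\|<C+C=2C$ gives the stated bound.
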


\begin{proof}
Let $\left\{f_{n,r}:B_r(v_n)\to B_r(v_0')\right\}_{r\in\mathbb{N}}$  be the coherent maps (that are eventually isomorphic) such that $\{(H_n,G_n,v_n)\}_{n\in\mathbb{N}}$ converges with respect to these maps to $(H',G',v'_0)$. The claim that $(G',v'_0)$ is an infinite, connected, rooted $d$-bounded graph follows immediately from the definition and the fact that $(G_n,v_n)$ is an infinite, connected, rooted $d$-bounded graph for each $n\in\mathbb{N}$. 

\medskip

Let $r\in\mathbb{N}$. By definition, there exists an $N_r\in\mathbb{N}$ such that $f_{n,r}:B_r(v_n)\to B_r(v'_0)$ is an isomorphism for all $n\geq N_r$ and 
$$
\lim_{n\to\infty, n\geq N_r}\big\| \mathcal{I}_{f_{n,r}} P_{v_n,r} H_n P_{v_n,r}\mathcal{I}_{f_{n,r}}^{-1} - H'_{v'_0,r}\big\| = 0.
$$
Furthermore, $B_r(v'_0)$ and $B_r(v_n)$ are finite sets of the same cardinality and so $\mathcal{I}_{f_{n,r}} P_{v_n,r} H_n P_{v_n,r}\mathcal{I}_{f_{n,r}}^{-1}$ and $H_{v'_0,r}$ can be represented as matrices that converge in the matrix norm to each other. This is equivalent to the convergence of the coefficients. Since by \eqref{eq:H}, $(H_n\psi)(v)$ depends only on the values of $\psi$ on the neighbours of $v$ and $v$ itself, it is straightforward to show that $H'$ is a Schr\"odinger operator of the form \eqref{eq:H}.

\medskip

Let $\psi\in\mathcal{C}_c(G')$ be such that $\|\psi\|\leq 1$. Then there is an $r\in\mathbb{N}$ such that $\text{supp}(\psi)\subseteq B_{r-1}(v'_0)$. According to Definition~\ref{def:Hgraphsconv}, there exists an $n_0\in\mathbb{N}$ such that 
$$
\big\| \mathcal{I}_{f_{n_0,r}} P_{v_{n_0},r} H_{n_0} P_{v_{n_0},r}\mathcal{I}_{f_{n_0,r}}^{-1} - H_{v'_0,r}\big\| < C.
$$
Since $\text{supp}(\psi)\subseteq B_{r-1}(v'_0)$ ($r-1$ is important here) and $H'$ is a Schr\"odinger operator of the form \eqref{eq:H}, we have
$$
H'\psi = H'_{v_{n_0},r} \psi.
$$
Thus, the previous considerations lead to
\begin{align*}
\|H'\psi\| = &\| H'_{v'_0,r}\psi\|\\
	\leq &\big\|H'_{v'_0,r}\psi - \mathcal{I}_{f_{n_0,r}} P_{v_{n_0},r} H_{n_0} P_{v_{n_0},r}\mathcal{I}_{f_{n_0,r}}^{-1} \psi\big\|
	+ \big\| \mathcal{I}_{f_{n_0,r}} H_{v_{n_0},r}\mathcal{I}_{f_{n_0,r}}^{-1} \psi\big\|\\
	< & 2 C
\end{align*}
as $\|\psi\|\leq 1$ and $\mathcal{I}_{f_{n_0,r}}$ is an isomorphism.
\end{proof}

\smallskip{}

Next, we show that the operation of considering the $\mathcal{R}$-limits of $H$ is a contraction in the sense that $\mathcal{R}(\mathcal{R}(H))\subseteq \mathcal{R}(H)$.

\begin{lemma} 
\label{lem:RRsubsetR}
Let $(G,v_0)$ be a rooted $d$-bounded graph and $H$ be a Schr\"odinger operator on $\ell^2(G)$ as defined in \eqref{eq:H}. If $\{(H'_m,G'_m,u'_m)\}_m$ is a sequence of $\mathcal{R}$-limits of $H$ that converges in the sense of Definition~\ref{def:Hgraphsconv} to $(\widetilde{H},\widetilde{G},\widetilde{v}_0)$ then $(\widetilde{H},\widetilde{G},\widetilde{v}_0)$ is an $\mathcal{R}$-limit of $H$. In particular, $\mathcal{R}(H')\subseteq \mathcal{R}(H)$ holds for all $\mathcal{R}$-limit $H'$ of $H$, and thus also
$\mathcal{R}(\mathcal{R}(H))\subseteq \mathcal{R}(H)$.
\end{lemma}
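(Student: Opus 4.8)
The plan is to establish the first, substantive claim — that the limit $(\widetilde H,\widetilde G,\widetilde v_0)$ of a sequence of $\mathcal{R}$-limits of $H$ is itself an $\mathcal{R}$-limit of $H$ — by a diagonal extraction, and then to read off $\mathcal{R}(H')\subseteq\mathcal{R}(H)$ and $\mathcal{R}(\mathcal{R}(H))\subseteq\mathcal{R}(H)$ as consequences. By Lemma~\ref{lem:RlimProp} each $H'_m$ is a genuine Schr\"odinger operator of the form \eqref{eq:H} with $\|H'_m\|\le 2\|H\|$, so the same lemma applied once more guarantees that $\widetilde H$ is again such an operator on an infinite, connected, $d$-bounded graph; it remains only to produce a realizing sequence in $G$. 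To set up, I would unwind the two layers of convergence: for each $m$, fix a sequence $v^m_n\to\infty$ in $G$ with coherent maps $f^m_{n,r}\colon B_r(v^m_n)\to B_r(u'_m)$ witnessing $\{(H,G,v^m_n)\}_n\to(H'_m,G'_m,u'_m)$, and fix the coherent maps $g_{m,r}\colon B_r(u'_m)\to B_r(\widetilde v_0)$ witnessing $\{(H'_m,G'_m,u'_m)\}_m\to(\widetilde H,\widetilde G,\widetilde v_0)$.

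The diagonalization then proceeds in two nested steps. For each $k\in\mathbb{N}$, first choose $m_k$ (strictly increasing in $k$) so large that $g_{m_k,k}$ is a graph isomorphism and $\|\mathcal{I}_{g_{m_k,k}}P_{u'_{m_k},k}H'_{m_k}P_{u'_{m_k},k}\mathcal{I}_{g_{m_k,k}}^{-1}-\widetilde H_{\widetilde v_0,k}\|<1/k$; then, with $m_k$ fixed, choose $n_k$ so large that $f^{m_k}_{n_k,k}$ is a graph isomorphism, $\|\mathcal{I}_{f^{m_k}_{n_k,k}}P_{v^{m_k}_{n_k},k}HP_{v^{m_k}_{n_k},k}\mathcal{I}_{f^{m_k}_{n_k,k}}^{-1}-(H'_{m_k})_{u'_{m_k},k}\|<1/k$, and in addition $\mathrm{dist}(v_0,v^{m_k}_{n_k})>\mathrm{dist}(v_0,w_{k-1})$, which is possible because $v^{m_k}_n\to\infty$ as $n\to\infty$. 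I set $w_k:=v^{m_k}_{n_k}$, so that $w_k\to\infty$ monotonically, and define the composite maps $h_{k,r}:=g_{m_k,r}\circ f^{m_k}_{n_k,r}$.

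It remains to verify $\{(H,G,w_k)\}_k\to(\widetilde H,\widetilde G,\widetilde v_0)$ via $\{h_{k,r}\}$. Condition (C1) is routine: coherence of $\{h_{k,r}\}_r$ in $r$ follows from coherence of the two constituent families together with $\mathcal{I}_{h_{k,r}}=\mathcal{I}_{g_{m_k,r}}\mathcal{I}_{f^{m_k}_{n_k,r}}$, and for $k\ge r$ the map $h_{k,r}$ is a graph isomorphism as a composite of such. The decisive point, and the main obstacle, lies in (C2), since the two layers of operator convergence hold only as their respective indices tend to infinity, whereas here both $m_k$ and $n_k$ grow with $k$. The mechanism that makes the double limit go through is that control at radius $k$ forces control at every radius $r\le k$: because $B_r\subseteq B_k$ one has $P_{\cdot,r}=P_{\cdot,r}P_{\cdot,k}$, so compressing the radius-$k$ operator to the sub-ball recovers the radius-$r$ operator, compression does not increase the operator norm, and by coherence this compatibility survives conjugation by the nested isomorphisms. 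Hence, for fixed $r$ and all $k\ge r$, inserting the intermediate operator $\mathcal{I}_{g_{m_k,r}}(H'_{m_k})_{u'_{m_k},r}\mathcal{I}_{g_{m_k,r}}^{-1}$ and using that the maps $\mathcal{I}_{g_{m_k,r}}$ are unitary, the triangle inequality gives $\|\mathcal{I}_{h_{k,r}}P_{w_k,r}HP_{w_k,r}\mathcal{I}_{h_{k,r}}^{-1}-\widetilde H_{\widetilde v_0,r}\|<2/k\to 0$.

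For the \emph{in particular}, let $H'\in\mathcal{R}(H)$ and $\widetilde H\in\mathcal{R}(H')$, realized by a sequence $w_m\to\infty$ in $G'$. The plan is to observe that each shifted triple $(H',G',w_m)$ is \emph{itself} an $\mathcal{R}$-limit of $H$ — obtained by pulling the root $w_m$ back along the eventually isomorphic coherent maps realizing $(H',G',u'_0)\in\mathcal{R}(H)$ — so that $\{(H',G',w_m)\}_m$ is a sequence of $\mathcal{R}$-limits of $H$ converging to $\widetilde H$; the claim just proved then yields $\widetilde H\in\mathcal{R}(H)$, whence $\mathcal{R}(H')\subseteq\mathcal{R}(H)$ and therefore $\mathcal{R}(\mathcal{R}(H))\subseteq\mathcal{R}(H)$. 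The only nontrivial bookkeeping here is this root-shift step, where one checks that pulling a base point back along coherent ball-isomorphisms again gives a convergent sequence of shifts of $H$; this is a secondary obstacle, much milder than the diagonal double limit above.
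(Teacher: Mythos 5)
Your proof is correct and follows essentially the same route as the paper: compose the two layers of coherent maps, choose the indices diagonally so that both operator approximations hold at radius $k$, and conclude by the triangle inequality (the paper's choice of $M_R$ and then $K(R,m)$ is exactly your $m_k$ followed by $n_k$, and its $g_{m,R}:=f'_{m,R}\circ f^{(m)}_{K(R,m),R}$ is your $h_{k,r}$). If anything, your write-up is more complete than the paper's, which leaves implicit both the compression argument showing that radius-$k$ control yields radius-$r$ control for $r\le k$ (needed to assemble one sequence $w_k\to\infty$ verifying (C2) for every fixed $r$) and the root-shift observation required to deduce the ``in particular'' clause $\mathcal{R}(H')\subseteq\mathcal{R}(H)$.
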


\begin{proof}
By assumption, we have the following
\begin{enumerate}
\item The sequence $\left(H'_m,G'_m,u'_m\right)$ converges to $\left(\widetilde{H},\widetilde{G},\widetilde{v}_0\right)$ along the sequence of coherent maps $\left\{ f_{m,r}':B_r(u'_m)\to B_r(\widetilde{v}_0)\right\} _{m,r\in\mathbb{N}}$ that are eventually graph isomorphisms according to Definition~\ref{def:Hgraphsconv}.
\item For $m\in\mathbb{N}$, there is a sequence $\left\{ u^{(m)}_k\right\}_{k\in\mathbb{N}}\subseteq V(G)$ and a sequence of coherent maps $\left\{ f^{(m)}_{k,r}:B_r(u^{(m)}_k)\to B_r(u'_m)\right\} _{k,r\in\mathbb{N}}$ such that $\{(H,G,u^{(m)}_k)\}_{k\in\mathbb{N}}$ converges to $\left(H'_m,G'_m,u'_m\right)$.
\end{enumerate}

Let $\varepsilon>0$ and $R\in\mathbb{N}$. Invoking (1) there is an $M_R\in\mathbb{N}$ such that $f'_{m,R}:B_R(u'_m) \to B_R(\widetilde{v}_0)$ is a graph isomorphism for $m\geq M_R$ and 
$$
\big\| \mathcal{I}_{f'_{m,R}} \left(H'_m\right)_{u'_m,R}\mathcal{I}_{f'_{m,R}}^{-1} - \widetilde{H}_{\widetilde{v}_0,R}\big\| < \frac{\varepsilon}{2},\qquad m\geq M_R.
$$

Let $m\geq M_R$. Invoking (2) there is an $K(R,m)\in\mathbb{N}$ such that $f^{(m)}_{k,R}:B_R(u^{(m)}_k) \to B_R(u'_m)$ is a graph isomorphism for $k\geq K(R,m)$ and 
$$
\big\| \mathcal{I}_{f^{(m)}_{k,R}} H_{u^{(m)}_k,R}\mathcal{I}_{f^{(m)}_{k,R}}^{-1} - \left(H'_m\right)_{u'_m,R}\big\| < \frac{\varepsilon}{2},\qquad k\geq K(R,m).
$$
For $m\geq M_R$, the map
$$
g_{m,R}:=f'_{n,R}\circ f^{(m)}_{K(R,m),R}:B_R(u^{(m)}_{K(R,m)})\to B_R(\widetilde{v}_0)
$$
is a graph isomorphism and $\mathcal{I}_{f'_{m,R}} \mathcal{I}_{f^{(m)}_{K(R,m),R}} = \mathcal{I}_{g_{m,R}}$.
Thus, we derive
\begin{align*}
&\big\| 
	\mathcal{I}_{g_{m,R}} H_{u^{(m)}_{K(R,m)},R} \mathcal{I}_{g_{m,R}}^{-1}
	- \mathcal{I}_{f'_{m,R}} H'_{u'_m,R}\mathcal{I}_{f'_{m,R}}^{-1}
\big\|\\
=\,
&\big\|
		\mathcal{I}_{f^{(m)}_{K(R,m),R}} H_{u^{(m)}_{K(R,m)},R} \mathcal{I}_{f^{(m)}_{K(R,m),R}}^{-1} 
	- H'_{u'_m,R}
	\big\|
	< \frac{\varepsilon}{2}.
\end{align*}
With this at hand, the triangle inequality leads to
\begin{align*}
\big\| 
	\mathcal{I}_{g_{m,R}} H_{u^{(m)}_{K(R,m)},R} \mathcal{I}_{g_{m,R}}^{-1}- \widetilde{H}_{\widetilde{v}_0,R}
\big\|\leq 	 &\big\| 
		\mathcal{I}_{g_{m,R}} H_{u^{(m)}_{K(R,m)},R} \mathcal{I}_{g_{m,R}}^{-1}
		- \mathcal{I}_{f'_{m,R}} H'_{u'_m,R}\mathcal{I}_{f'_{m,R}}^{-1}
	\big\|\\
	 &+ \big\|
		\mathcal{I}_{f'_{m,R}} H'_{u'_m,R}\mathcal{I}_{f'_{m,R}}^{-1} - \widetilde{H}_{\widetilde{v}_0,R}
	\big\|
	< \varepsilon
\end{align*}
for all $m\geq M_R$. Since $\varepsilon>0$ was arbitrary, $\widetilde{H}$ is an $\mathcal{R}$-limit of $H$.
\end{proof}

\smallskip{}

The following statement provides a (sequentially) compactness property of the set of triples $(H,G,v)$ where the operators are uniformly bounded in the operator norm.

\begin{lemma}
\label{lem:CompConvGraph}
Let $\{(G_n,v_n)\}_{n\in\mathbb{N}}$ be a sequence of connected, infinite, rooted $d$-bounded graphs and $H_n$ be a sequence of Schr\"odinger operators of the form \eqref{eq:H} on $(G_n,v_n)$ such that $\sup_{n\in\mathbb{N}}\|H_n\|\leq C$ for some $C>0$. Then there exists a Schr\"odinger operator $H'$ on a rooted $d$-bounded graph $(G',v'_0)$ and a subsequence $\{H_{n_k},G_{n_k},v_{n_k}\}_{k\in\mathbb{N}}$ that converges to $(H',G',v'_0)$.
\end{lemma}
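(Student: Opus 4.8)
The plan is a two-stage compactness (diagonal) argument: first extract a subsequence along which the rooted balls stabilize combinatorially, producing the limit graph $(G',v'_0)$ together with coherent isomorphisms, that is, condition (C1); then extract a further subsequence along which the potentials stabilize, which yields the operator $H'$ and condition (C2).

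First I would record the two uniform bounds that make the argument compact. Since every $G_n$ is $d$-bounded, $\sharp B_r(v_n)\le 1+d+d(d-1)+\cdots+d(d-1)^{r-1}=:N(d,r)$, a bound independent of $n$, and hence for each fixed $r$ only finitely many isomorphism classes of rooted connected graphs can occur as $B_r(v_n)$. Moreover, reading off the diagonal entry of $H_n$ from \eqref{eq:H} gives $\langle\delta_v,H_n\delta_v\rangle=W(v)-\deg(v)$, so $|W_n(v)|\le\|H_n\|+d\le C+d$ for all $v$; thus the potentials are uniformly bounded.

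For the graph convergence I would use the finiteness of isomorphism types to build nested subsequences $\mathbb{N}\supseteq S_1\supseteq S_2\supseteq\cdots$ such that along $S_r$ all balls $B_r(v_n)$ are isomorphic, as rooted graphs, to a fixed $(\Gamma_r,v'_0)$. Because $B_r(v_n)$ is the radius-$r$ ball around the root of $B_{r+1}(v_n)$, the types nest, so $\Gamma_r$ is isomorphic to the radius-$r$ ball around the root of $\Gamma_{r+1}$; setting $G':=\bigcup_r\Gamma_r$ then gives a connected $d$-bounded rooted graph $(G',v'_0)$, which is infinite since connectedness and infiniteness of each $G_n$ force $\sharp B_r(v_n)\ge r+1$. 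Choosing the diagonal sequence $(n_k)$ with $n_k\in S_k$, fixing for each $k$ an isomorphism $\phi_k:B_k(v_{n_k})\to\Gamma_k$, and defining $f_{n_k,r}$ to agree with $\phi_k$ on $B_{\min(r,k)}(v_{n_k})$ and to send all remaining vertices to $v'_0$, one checks directly that these maps are coherent in $r$ and are graph isomorphisms $B_r(v_{n_k})\to B_r(v'_0)$ whenever $k\ge r$, because a rooted isomorphism restricts to isomorphisms of concentric balls. This establishes (C1).

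For the potentials and (C2), transport $W_{n_k}$ to the limit by setting $\widehat{W}_k(w):=W_{n_k}\big(f_{n_k,r}^{-1}(w)\big)$ for $w\in V(G')$ with $|w|=r_0$ and any $r$ with $r_0\le r\le k$; coherence of the $f_{n_k,r}$ makes this independent of $r$. As $|\widehat{W}_k(w)|\le C+d$ and $V(G')$ is countable, a further diagonal extraction yields a subsequence along which $\widehat{W}_k(w)\to W'(w)$ for every $w$, and I let $H'$ be the operator of the form \eqref{eq:H} with potential $W'$ on $(G',v'_0)$. For fixed $r$ and $k\ge r+1$, the matrix $\mathcal{I}_{f_{n_k,r}}P_{v_{n_k},r}H_{n_k}P_{v_{n_k},r}\mathcal{I}_{f_{n_k,r}}^{-1}$ has off-diagonal entries determined by the adjacency of $\Gamma_r$, which agree with those of $H'_{v'_0,r}$ since $f_{n_k,r}$ is a graph isomorphism, and diagonal entries $\widehat{W}_k(w)-\deg_{G_{n_k}}\big(f_{n_k,r}^{-1}(w)\big)$. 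The one genuine subtlety is the matching of the full degree here with $\deg_{G'}(w)$: this holds once $k\ge r+1$, because passing to radius $r+1$ the isomorphism $f_{n_k,r+1}$ identifies all neighbours of $f_{n_k,r}^{-1}(w)$, which lie within distance $r+1$, with those of $w$ in $G'$. Hence every matrix entry converges and $\lim_k\big\|\mathcal{I}_{f_{n_k,r}}P_{v_{n_k},r}H_{n_k}P_{v_{n_k},r}\mathcal{I}_{f_{n_k,r}}^{-1}-H'_{v'_0,r}\big\|=0$ for each $r$, which is (C2). The extracted subsequence therefore converges to $(H',G',v'_0)$, as required.
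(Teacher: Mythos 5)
Your proposal is correct and follows essentially the same two-stage Cantor diagonalization as the paper: first stabilize the isomorphism types of the balls $B_r(v_n)$ (finitely many for each $r$ by $d$-boundedness) to build $(G',v'_0)$ and the coherent maps, then a second diagonal extraction to obtain operator convergence. The only cosmetic difference is that you extract pointwise convergence of the transported potentials and verify the matrix entries (including the degree-matching step at radius $r+1$) explicitly, whereas the paper extracts norm convergence of the compressed matrices directly and delegates the identification of $H'$ as an operator of the form \eqref{eq:H} to Lemma~\ref{lem:RlimProp}; in finite dimensions these amount to the same thing.
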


\begin{proof}
By assumption $deg(v)\leq d$ holds for all $v\in V(G_n)$ and $n\in\mathbb{N}$. Thus, for each $r\in\mathbb{N}$, the set 
$$
\mathcal{B}_r:=\{ B_r(v)\,:\, v\in V(G_n),\, n\in\mathbb{N}\}/\sim
$$ 
is finite, where $\sim$ is the equivalence relation induced by graph isomorphism. Then a Cantor diagonalization argument gives a convergent subsequence $\{(G_n,v_n)\}_{n\in\mathbb{N}}$ to a rooted $d$-bounded graph $(G',v'_0)$. By a similar argument and by passing to another subsequence one gets the desired result that $\{\left(H_{n_k},G_{n_k},v_{n_k}\right)\}_{k\in\mathbb{N}}$  converges to $(H',G',v'_0)$. Since these arguments are standard, we only provide a sketch of the proof here. 

\medskip

Let $r=1$. Since $\mathcal{B}_1$ is finite, there is a subsequence $\{N(1,k)\}_{k\in\mathbb{N}}\subseteq\mathbb{N}$ with $N(1,k)\to\infty$ such that $B_1(v_{N(1,k_1)})\sim B_1(v_{N(1,k_2)})$ for all $k_1,k_2\in\mathbb{N}$. Now let $r=2$. By the same argument, there is a subsequence $\{N(2,k)\}_{k\in\mathbb{N}}\subseteq \{N(1,k)\}_{k\in\mathbb{N}}$ such that $B_R(v_{N(2,k_1)})\sim B_R(v_{N(2,k_2)})$ for all $k_1,k_2\in\mathbb{N}$ and $1\leq R\leq 2$. By recursion we get for each $r\in\mathbb{N}$ a subsequence 
\begin{equation}
\label{eq:CompConvGraph}
\{N(r,k)\}_{k\in\mathbb{N}}
	\subseteq \{N(r-1,k)\}_{k\in\mathbb{N}} 
	\subseteq \ldots \subseteq \{N(2,k)\}_{k\in\mathbb{N}} 
	\subseteq \{N(1,k)\}_{k\in\mathbb{N}} \subseteq \mathbb{N}
\end{equation}
such that 
$$
B_R(v_{N(r,k_1)})\sim B_R(v_{N(r,k_2)})\,,\qquad k_1,k_2\in\mathbb{N}\,,\, 1\leq R \leq r.
$$
In particular, due to Equation~\eqref{eq:CompConvGraph}, $B_R(v_{N(r,k)})$ is isomorphic to $B_R(v_{N(R,1)})$ for every $k\in\mathbb{N}$ and $1\leq R \leq r$.

\medskip

In order to define the graph $(G',v'_0)$ it suffices to define $B_R(v'_0)$ for all $R\in\mathbb{N}$ modulo graph isomorphism. Define $B_R(v'_0):=B_R(v_{N(R,1)})$. By construction $B_R(v_{N(R,1)})\sim B_R(v_{N(r,k)})$ holds for all $k\in\mathbb{N}$ and $R\leq r$. Thus, the rooted graph $(G',v'_0)$ is well-defined (up to graph isomorphism). By construction, $(G',v'_0)$ is a connected and infinite (rooted) $d$-bounded graph.

\medskip

We claim that the diagonal sequence $\{G_{N(k,k)},v_{N(k,k)}\}_{k\in\mathbb{N}}$ converges to $(G',v'_0)$. This can be seen as follows. Define $f_{k,R}:B_R(v_{N(k,k)}) \to B_R(v'_0)$ for $k\geq R$ to be the graph isomorphism between $B_R(v_{N(k,k)})$ and $B_R(v'_0)= B_R(v_{N(R,1)})$ which exists by construction as {$k\geq R$}. If $k\leq R$, define $f_{k,R}:B_R(v_{N(k,k)}) \to B_R(v'_0)$ by $f_{k,R}(u):= v'_0$ for all $u\in B_R(v_{N(k,k)})$. By construction, these maps are eventually graph isomorphisms (as $k\to\infty$), namely they satisfy the constraints given in Definition~\ref{def:Hgraphsconv}.

\medskip

By the latter considerations we have shown that there is a subsequence that converges to an infinite and connected (rooted) $d$-bounded graph $(G',v'_0)$. In order to simplify the notation, suppose that $\{(G_n,v_n)\}_{n\in\mathbb{N}}$ converges to $(G',v'_0)$. The operators $H_n$ are uniformly bounded in $n\in\mathbb{N}$. Thus, for fixed $r\in\mathbb{N}$, there is a subsequence $\{n_k\}_{k\in\mathbb{N}}$ such that $\mathcal{I}_{f_{n_k,r}} P_{v_{n_k},r} H_{n_k} P_{v_{n_k},r}\mathcal{I}_{f_{n_k,r}}^{-1}$ converges in norm (using that $\ell^2\big(B_r(v_{n_k})\big)$ is a finite dimensional vector space). By a similar argument as for the graph sequence one can construct (with a Cantor diagonalization argument) an operator $H'$ on $(G',v'_0)$ and a subsequence $\{n_k\}_{k\in\mathbb{N}}$ such that for each $r\in\mathbb{N}$
$$
\lim_{k\to\infty, n_k\geq N_r} \big\| \mathcal{I}_{f_{n_k,r}} P_{v_{n_k},r} H_{n_k} P_{v_{n_k},r}\mathcal{I}_{f_{n_k,r}}^{-1} - H'_{v'_0,r}\big\| = 0
$$
where $N_r\in\mathbb{N}$ is chosen such that $f_{n_k,r}$ defines a graph isomorphism for $n_k\geq N_r$. According to Lemma~\ref{lem:RlimProp}, $H'$ is a bounded Schr\"odinger operator of the form \eqref{eq:H}.
\end{proof}

\medskip

\subsection{Existence of bounded generalized eigenfunctions for \texorpdfstring{$\mathcal{R}$}{R}-limits}
This section is devoted to provide conditions such that a bounded generalized eigenfunction with eigenvalue $\lambda$ of an $\mathcal{R}$-limit exists. These are key ingredients for the proof of Theorem~\ref{thm:sigmainfty}.

\begin{prop}
\label{prop:Rlimseq}
Let $\left(H,G,v_0\right)$ and $\left\{ \left(H_{n},G_n,v_n\right)\right\}_{n\in\mathbb{N}}$ be such that either
\begin{itemize}
\item[(a)] each $\left(H_{n},G_n,v_n\right)$ is an $\mathcal{R}$-limits of $\left(H,G,v_0\right)$, or
\item[(b)] $H_n=H$, $G_n=G$ and $\{v_n\}_{n\in\mathbb{N}}$ monotonically converges to infinity.
\end{itemize}
Let $\varphi^{(n)}:V(G_n)\to\mathbb{C}$, $\lambda_n\in\mathbb{C}$ and $R_n\in\mathbb{N}$ be such that $\lim\limits_{n\to\infty}\lambda_n=\lambda'$, $\lim\limits_{n\to\infty}R_n=\infty$,
$$
P_{v_o,r}H_n\varphi^{(n)} = \lambda_n P_{v_0,r}\varphi^{(n)}\,,\qquad
	\text{for all \ \ } r\leq R_n\,,\; n\in\mathbb{N}\,,
$$ 
and
\begin{equation}\label{eq:Prop3.6cond}
\max_{u\in B_n(v_n)}\left|\varphi^{(n)}(u)\right|
	\leq C\, \left|\varphi^{(n)}\left(v_n\right)\right|\neq0,
\end{equation}
for some constant $C>0$. Then there exists an $\mathcal{R}$-limit $\left(H',G',v_0'\right)\in\mathcal{R}(H)$ that is a limit of $\left\{ \left(H_{n},G_n,v_n\right)\right\}_{n\in\mathbb{N}}$ and a generalized eigenfunction $0\neq\varphi'\in\ell^{\infty}(G')$ of $H'$ with eigenvalue $\lambda'$.
\end{prop}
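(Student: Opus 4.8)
The plan is to pass to the limit in the normalized, truncated eigenvalue equations after extracting a convergent subsequence of triples. First I would normalize: since $\varphi^{(n)}(v_n)\neq0$ by \eqref{eq:Prop3.6cond}, put $\psi^{(n)}:=\varphi^{(n)}/\varphi^{(n)}(v_n)$, so that $\psi^{(n)}(v_n)=1$, $\max_{u\in B_n(v_n)}|\psi^{(n)}(u)|\le C$, and the relations $P_{v_n,r}H_n\psi^{(n)}=\lambda_n P_{v_n,r}\psi^{(n)}$ persist for all $r\le R_n$.

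Next I would produce the limiting triple. In case (b) one has $\|H_n\|=\|H\|$, and in case (a) each $H_n$ is an $\mathcal{R}$-limit, whence $\|H_n\|\le 2\|H\|$ by Lemma~\ref{lem:RlimProp}; in either case $\sup_n\|H_n\|<\infty$. Lemma~\ref{lem:CompConvGraph} then supplies a subsequence (which I relabel) converging, along coherent maps $f_{n,r}$, to a bounded Schr\"odinger operator $(H',G',v_0')$ of the form \eqref{eq:H}. That $(H',G',v_0')\in\mathcal{R}(H)$ follows from Definition~\ref{def:Rlimit} in case (b) --- a subsequence of a monotone sequence going to infinity still goes to infinity --- and from Lemma~\ref{lem:RRsubsetR} in case (a).

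I would then build $\varphi'$ by a diagonal argument. For fixed $r$ and all large $n$ the pushforward $\mathcal{I}_{f_{n,r}}P_{v_n,r}\psi^{(n)}$ is a vector on the finite set $B_r(v_0')$ of sup-norm at most $C$, so Cantor diagonalization over $r\in\mathbb{N}$ yields a further subsequence along which $\psi^{(n)}(f_{n,r}^{-1}(w))$ converges for every $w\in V(G')$. Coherence of the $f_{n,r}$ makes this limit independent of $r$, so $\varphi'(w):=\lim_n\psi^{(n)}(f_{n,r}^{-1}(w))$ is well defined with $\|\varphi'\|_\infty\le C$; and since the rooted isomorphisms satisfy $f_{n,r}(v_n)=v_0'$, one gets $\varphi'(v_0')=\lim_n 1=1\neq0$, so $\varphi'\not\equiv0$.

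Finally I would pass to the limit in the equation. Fix $w\in V(G')$ and choose $r$ with $w\in B_{r-1}(v_0')$; for $n$ large, $R_n\ge r$ and $f_{n,r}$ is an isomorphism, and locality of operators of the form \eqref{eq:H} lets me replace $H_n$, $H'$ by their truncations $(H_n)_{v_n,r}$, $(H')_{v_0',r}$ on the interior $B_{r-1}$. Conjugating the truncated equation to $\ell^2(B_r(v_0'))$ yields
\[
\big[\mathcal{I}_{f_{n,r}}(H_n)_{v_n,r}\mathcal{I}_{f_{n,r}}^{-1}\big]\big[\mathcal{I}_{f_{n,r}}P_{v_n,r}\psi^{(n)}\big]=\lambda_n\,\mathcal{I}_{f_{n,r}}P_{v_n,r}\psi^{(n)},
\]
where by (C2) the bracketed operator converges in norm to $(H')_{v_0',r}$ and, by the previous step, the vector converges to $P_{v_0',r}\varphi'$. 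Evaluating at $w$ and using $\lambda_n\to\lambda'$ gives $(H'\varphi')(w)=\lambda'\varphi'(w)$; as $w$ is arbitrary, $\varphi'$ is the sought bounded generalized eigenfunction of $H'$ with eigenvalue $\lambda'$. The main obstacle is precisely this last passage: one must combine the operator-norm convergence of (C2) with the merely pointwise convergence of the normalized eigenfunctions and track the interior/boundary gap $B_{r-1}\subset B_r$ so that truncation does not corrupt the equation at $w$ --- the step where both $R_n\to\infty$ and the uniform bound $C$ are indispensable.
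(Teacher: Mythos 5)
Your proposal is correct and follows essentially the same route as the paper's proof: normalize by $\varphi^{(n)}(v_n)$, use Lemma~\ref{lem:RlimProp} and Lemma~\ref{lem:CompConvGraph} to extract a convergent subsequence of triples (with the $\mathcal{R}$-limit property settled via Lemma~\ref{lem:RRsubsetR} in case (a) and the definition in case (b)), diagonalize to obtain a bounded limit function equal to $1$ at the root, and pass to the limit in the truncated eigenvalue equation by combining the norm convergence of (C2) with the convergence of the pushed-forward vectors and the locality of operators of the form \eqref{eq:H}. The only cosmetic difference is that the paper carries out the last step as a four-term $\varepsilon$-estimate in norm on $B_r(v_0')$, whereas you evaluate pointwise at a vertex in the interior $B_{r-1}(v_0')$; these are equivalent since the balls are finite-dimensional.
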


\begin{proof}
Due to Lemma~\ref{lem:RlimProp} and Lemma~\ref{lem:CompConvGraph}, there is no loss of generality (by passing to a subsequence) in assuming that the sequence $\{(H_n,G_n,v_n)\}_{n\in\mathbb{N}}$ converges to $(H',G',v'_0)$ with respect to the coherent maps $\left\{f_{n,r}\right\}_{n,r\in\mathbb{N}}$ in the sense of Definition~\ref{def:Hgraphsconv}. In addition there is no loss of generality (again, by passing to a subsequence) to assume that $f_{n,r}:B_r(v_n)\to B_r(v'_0)$ is a graph isomorphism for all $r\leq n$. Define $\psi^{(n)}:V(G')\to\mathbb{C}$ by
\[
\psi^{(n)}(u) := \frac{1}{\varphi^{(n)}(v_n)}\, \left(\mathcal{I}_{f_{n,n}} P_{v_n,n}\varphi^{(n)}\right)(u)
\]
Since $f_{n,n}^{-1}(v'_0)=v_n$, we have $\psi^{(n)}\left(v_{0}'\right)=1$ and
\[
\left\Vert\psi^{(n)}\right\Vert=\sup_{u\in G}\left|\psi^{(n)}\left(u\right)\right|=\sup_{u\in B_n(v_0')}\left|\psi^{(n)}\left(u\right)\right|\leq C
\]
by the assumption \eqref{eq:Prop3.6cond}. 
Thus, $\psi^{(n)}\in\ell^\infty(G')$.

\medskip

By construction, the sequence $\psi^{(n)}(u)$ is uniformly bounded for every $u\in V(G')$. Hence, the Bolzano-Weierstrass theorem and a Cantor diagonalization argument yield that there is a subsequence $\{\psi^{(n(\ell))}\}_{\ell\in\mathbb{N}}\subseteq\mathbb{N}$ and a $\psi':V(G')\to\mathbb{C}$ such that for all $r\in\mathbb{N}$,
\[
\lim_{\ell\to\infty} \left\| P_{v'_0,r}\left(\psi^{(n(\ell))}-\psi'\right)\right\| = 0.
\]
Furthermore, $|\psi'(u)|\leq C$ holds for all $u\in V(G')$, namely $\psi'\in\ell^\infty(G')$. Furthermore, $\psi'(v'_0)=1$ follows from $\psi^{(n)}\left(v_{0}'\right)=1$.

\medskip

In case (a), each $H_n$ is an $\mathcal{R}$-limit of $H$. Lemma~\ref{lem:RlimProp} and Lemma~\ref{lem:RRsubsetR} assert that $H'$ is also an $\mathcal{R}$-limit of $H$ with $\|H'\|\leq 2\|H\|$. If (b) holds, $H'$ is also an $\mathcal{R}$-limit of $H$ as $\{v_n\}_{n\in\mathbb{N}}$ goes to infinity. Thus, it is left to show that $\psi'\in\ell^\infty(G')$ defines a generalized eigenfunction of $H'$ with eigenvalue $\lambda'$. 

\medskip

Let $\varepsilon>0$ and fix $r\in\mathbb{N}$.
Since $H'$ is an $\mathcal{R}$-limit of $H$, it is of the form \eqref{eq:H} (see Lemma~\ref{lem:RlimProp}). Thus, $P_{v'_0,r}H'\psi' = P_{v'_0,r}P_{v'_0,r+1}H'P_{v'_0,r+1}\psi'$ follows implying 
\begin{align*}
\big\|P_{v'_0,r}\left(H'\psi'-\lambda \psi'\right)\big\| 
	\leq \|P_{v'_0,r}\| \big\| H'_{v_0',r+1}\psi' - \lambda' P_{v'_0,r+1}\psi' \big\|
	\leq \big\| H'_{v_0',r+1}\psi' - \lambda' P_{v'_0,r+1}\psi' \big\|.
\end{align*}
In order to simplify the notation, set 
$$
L_{n,r}
	:=\mathcal{I}_{f_{n,r}} P_{v_{n},r} H_{n} P_{v_{n},r}\mathcal{I}_{f_{n,r}}^{-1}
$$
acting on $\ell^2\big(B_{r+1}(v'_0)\big)$. Since $\lim_{\ell\to\infty}\lambda_{n(\ell)}=\lambda'$, there exists a $C_0>0$ such that $|\lambda_{n(\ell)}|\leq C_0$. Recall that for each $n\in\mathbb{N}$, there is an $R_n\in\mathbb{N}$ such that $P_{v_0,r}H_n\varphi^{(n)} = \lambda_n P_{v_0,r}\varphi^{(n)}$ for all $r\leq R_n$.  
Choose $\ell\in\mathbb{N}$ such that $n(\ell), R_{n(\ell)}>r$ and
\begin{align*}
\left\| L_{n(\ell),r+1} -H'_{v_0',r+1}\right\| 
	< &\frac{\varepsilon}{3 \|P_{v'_0,r} \psi'\|}\,,\\
\left\| P_{v'_0,r+1} \big(\psi' - \psi^{(n(\ell))}\big) \right\|
	< &\frac{\varepsilon}{6 \max\{\|H\|,C_0\}}\,,\\
|\lambda_{n(\ell)} - \lambda'|
	< &\frac{\varepsilon}{6 \|P_{v'_0,r} \psi'\|}.
\end{align*}
Note that $\|P_{v'_0,r} \psi'\| \neq 0$ as $\psi'(v'_0)=1$ by construction. 
Furthermore, $\big\| P_{v'_0,r} L_{n(\ell),r+1}\big\|\leq 2\|H\|$ follows as $H_n$ is an $\mathcal{R}$-limit and so $\|H_n\|\leq 2\|H\|$ holds by Lemma~\ref{lem:RlimProp}.
Hence,
\begin{align*}
&\big\|P_{v'_0,r}\left(H'\psi'-\lambda \psi'\right)\big\|\\
	\leq &\big\| P_{v'_0,r} H'_{v_0',r+1}\psi' - \lambda' P_{v'_0,r+1}\psi' \big\|\\
	\leq & \big\|P_{v'_0,r} H'_{v_0',r+1}\psi' - P_{v'_0,r} L_{n(\ell),r+1} \psi'\big\| 
			+ \big\|P_{v'_0,r} L_{n(\ell),r+1} \psi' - P_{v'_0,r} L_{n(\ell),r+1} \psi^{(n(\ell))}\big\|\\
		&+ \big\| P_{v'_0,r} L_{n(\ell),r+1} \psi^{(n(\ell))} - \lambda_{n(\ell)} P_{v'_0,r}\psi' \big\|
			+\big\| \lambda_{n(\ell)} P_{v'_0,r}\psi' - \lambda' P_{v'_0,r+1}\psi' \big\|\\
	=: &(1) + (2) + (3) +(4)
\end{align*}
follows by using the triangle inequality. We estimate each of the summands (1), (2), (3) and (4) separately. Specifically, the previous considerations and the choice of $\ell$ lead to
$$
(1) \leq \left\| L_{n(\ell),r+1} -H'_{v_0',r+1}\right\|  \, \|P_{v'_0,r+1} \psi'\| 
	\leq \frac{\varepsilon}{3}
$$
and
$$
(2) \leq \big\| P_{v'_0,r} L_{n(\ell),r+1}\big\|\, \big\|P_{v'_0,r+1} \big(\psi' - \psi^{(n(\ell))}\big) \big\| 
	\leq \frac{\varepsilon}{3}.
$$
Since $R_{n(\ell)}\geq r$, we have $P_{v_0,r} H_n\varphi^{(n(\ell))}=\lambda_n P_{v_0,r}\varphi^{(n(\ell))}$. Hence, using the choice $n(\ell)>r$, 
$$
P_{v'_0,r} L_{n(\ell),r+1} \psi^{(n(\ell))} = \lambda_{n(\ell)} P_{v'_0,r}\psi^{(n(\ell))} 
$$ 
follows. Thus, 
$$
(3) = \big\| \lambda_{n(\ell)}P_{v'_0,r}\psi^{(n(\ell))} - \lambda_{n(\ell)} P_{v'_0,r}\psi' \big\| < \frac{\varepsilon}{6}.
$$
Finally, using once more the choice of $\ell$, we deduce
$$
(4) \leq |\lambda_{n(\ell)} - \lambda'|\, \|P_{v'_0,r} \psi'\| < \frac{\varepsilon}{6}.
$$
Combining the latter estimates, we conclude
$$
\big\|P_{v'_0,r}\left(H'\psi'-\lambda \psi'\right)\big\| < (1)+(2)+(3)+(4)< \varepsilon
$$
Since $\varepsilon>0$ was arbitrary, we derive that $\big\|P_{v'_0,r}\big(H'\psi'-\lambda'\psi'\big)\big\|=0$ for any $r\in\mathbb{N}$, namely $\lambda'$ is a generalized eigenvalue of $H'$ with generalized eigenfunction $\psi'\in\ell^\infty(G')$.
\end{proof}
\smallskip{}

\begin{corollary}
\label{cor:UnBoundSpec-Clos}
The union $\bigcup_{H'\in\mathcal{R}(H)}\sigma_\infty(H')$ is closed.
\end{corollary}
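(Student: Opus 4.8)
The plan is to prove closedness by a diagonal argument: take a sequence $\lambda_k \in \bigcup_{H'\in\mathcal{R}(H)}\sigma_\infty(H')$ converging to some $\lambda' \in \mathbb{C}$, and produce a single $\mathcal{R}$-limit $H''$ of $H$ together with a bounded generalized eigenfunction of $H''$ with eigenvalue $\lambda'$. The natural tool is Proposition~\ref{prop:Rlimseq}, which is tailor-made for exactly this: it takes a sequence of triples (here, $\mathcal{R}$-limits) together with approximate eigenfunctions satisfying a normalization bound, and extracts a limiting $\mathcal{R}$-limit with a genuine bounded generalized eigenfunction. So the bulk of the work is to massage the data into the hypotheses of that proposition, invoking case (a).

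For each $k$, by definition of $\sigma_\infty$ there is an $\mathcal{R}$-limit $(H_k, G_k, w_k) \in \mathcal{R}(H)$ carrying a bounded generalized eigenfunction $\varphi_k \in \ell^\infty(G_k)$ with eigenvalue $\lambda_k$. The first step is to choose, for each $k$, a good root $v_k$ for $G_k$. Since $\varphi_k \not\equiv 0$ and is bounded, I would pick $v_k \in V(G_k)$ with $|\varphi_k(v_k)| \geq \tfrac{1}{2}\|\varphi_k\|_\infty > 0$ (or any vertex where $\varphi_k$ comes within a factor of $2$ of its supremum). The point of this choice is that the normalization condition \eqref{eq:Prop3.6cond} then holds automatically with the \emph{uniform} constant $C = 2$: for every radius, and in particular on $B_k(v_k)$, one has $\max_u |\varphi_k(u)| \leq \|\varphi_k\|_\infty \leq 2|\varphi_k(v_k)| \neq 0$. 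I would also need $(H_k, G_k, v_k)$ to still be an $\mathcal{R}$-limit of $H$ after re-rooting; this is harmless because re-rooting an $\mathcal{R}$-limit produces another $\mathcal{R}$-limit (it can be justified via Lemma~\ref{lem:RRsubsetR} applied to the shifted sequence, or directly from the definition), so case (a) of Proposition~\ref{prop:Rlimseq} applies to the triples $(H_k,G_k,v_k)$.

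With these choices, I set $\varphi^{(k)} := \varphi_k$, the eigenvalues $\lambda_k \to \lambda'$ by assumption, and $R_k := \infty$ in spirit — more precisely, since $\varphi_k$ is a genuine generalized eigenfunction of $H_k$, the relation $P_{v_k,r}H_k\varphi^{(k)} = \lambda_k P_{v_k,r}\varphi^{(k)}$ holds for \emph{all} $r$, so I may take $R_k := k$ (or any sequence tending to infinity) to meet the requirement $\lim_{k\to\infty} R_k = \infty$. Every hypothesis of Proposition~\ref{prop:Rlimseq}(a) is now satisfied, so it yields an $\mathcal{R}$-limit $(H'', G'', v_0'') \in \mathcal{R}(H)$ and a nonzero $\varphi'' \in \ell^\infty(G'')$ that is a generalized eigenfunction of $H''$ with eigenvalue $\lambda'$. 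Hence $\lambda' \in \sigma_\infty(H'') \subseteq \bigcup_{H'\in\mathcal{R}(H)}\sigma_\infty(H')$, proving the union is closed.

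The main obstacle I anticipate is not analytic but bookkeeping: verifying cleanly that the re-rooted triple $(H_k, G_k, v_k)$ is again an $\mathcal{R}$-limit of $H$, so that case (a) genuinely applies. One must check that shifting the root of a convergent sequence of balls $B_r(v_n) \to B_r(\text{root})$ to a new base vertex produces another coherent convergent family; this follows from the definitions and the contraction property $\mathcal{R}(\mathcal{R}(H)) \subseteq \mathcal{R}(H)$ of Lemma~\ref{lem:RRsubsetR}, but it should be stated carefully. Everything else — the uniform normalization constant and the all-radii eigenfunction equation — falls out immediately from having chosen $v_k$ near the supremum of $\varphi_k$.
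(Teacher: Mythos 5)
Your proof is correct and takes essentially the same route as the paper's: choose a vertex $v_k$ at which $|\varphi_k|$ comes within a factor of $2$ of $\|\varphi_k\|_\infty$, so that condition \eqref{eq:Prop3.6cond} holds with the uniform constant $C=2$, and apply Proposition~\ref{prop:Rlimseq}(a) to the triples $(H_k,G_k,v_k)$ with the exact eigenvalue equation supplying $R_k\to\infty$. The only difference is that you explicitly flag and justify the re-rooting of each $\mathcal{R}$-limit at $v_k$ (via the definition or Lemma~\ref{lem:RRsubsetR}), a point the paper's one-line proof passes over silently; that extra care is sound.
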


\begin{proof}
Assume $\left\{\lambda_n\right\}_{n\in\mathbb{N}}\subset\bigcup_{H'\in\mathcal{R}(H)}\sigma_\infty(H')$ and $\lim_{n\to\infty}\lambda_n=\lambda'$. For $n\in\mathbb{N}$, let $H_n$ be an $\mathcal{R}$-limit on the $d$-bounded graph $G_n$ and $0\neq\varphi_n\in \ell^\infty(G_n)$ be such that $(H_n-\lambda_n)\varphi_n=0$. Choose $u_n\in V(G_n)$ such that
$\varphi_n(u_n)\geq\frac{\left\Vert \varphi_n\right\Vert_\infty}{2}.$
Then the conditions of Proposition~\ref{prop:Rlimseq} are satisfied (for $C=2$) and so we derive that $\lambda'\in \bigcup_{H'\in\mathcal{R}(H)}\sigma_\infty(H')$.
\end{proof}

\medskip

Another consequence of Proposition~\ref{prop:Rlimseq} is the following statement. Let $\{\varphi_i\}_{i\in\mathbb{N}}$ be a sequence of functions $\varphi_i:V(G)\to\mathbb{C}$. We will use the notation
$$
q_{\ell,k}^{(i)}:= \max_{\ell(k-1)\leq |u| <\ell k} |\varphi_i(u)|\,,\qquad i\in\mathbb{N}\,,
$$
and, denote by $u_{\ell,k}^{(i)}\in V(G)$ a vertex satisfying $\ell(k-1)\leq |u_{\ell,k}^{(i)}| <\ell k$ and $q_{\ell,k}^{(i)} = |\varphi_i(u_{\ell,k}^{(i)})|$.

\begin{lemma}
\label{lem-q-Rlimit}
Let $(H,G,v_0)$ be given where $H$ is a Schr\"odinger operator on the infinite connected rooted $d$-bounded graph $(G,v_0)$. Suppose we are given a sequence $\{\varphi_n\}_{n\in\mathbb{N}}$ of generalized eigenfunctions of $H$ with eigenvalues $\{\lambda_n\}_{n\in\mathbb{N}}$ satisfying
\begin{itemize}
\item[(a)] $\lim_{n\to\infty}\lambda_n=\lambda$;
\item[(b)] there is an $s>1$ such that for each $k\in\mathbb{N}$, there are subsequences $n_i\to\infty$ and $\ell_i\to\infty$ satisfying
$$
q_{\ell_i,k}^{(n_i)}= |\varphi_{n_i}(u_{\ell_i,k}^{(n_i)})| \geq \frac{1}{s} \max\big\{ q_{\ell_i+1,k}^{(n_i)}, q_{\ell_i-1,k}^{(n_i)} \big\}.
$$
\end{itemize}
Then there is an $\mathcal{R}$-limit $(H',G',v'_0)$ of $H$, and a generalized eigenfunction $0\neq\varphi'\in\ell^{\infty}(G')$ of $H'$ with eigenvalue $\lambda$.
\end{lemma}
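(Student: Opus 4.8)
The plan is to deduce the statement directly from case~(b) of Proposition~\ref{prop:Rlimseq}, applied to the \emph{fixed} data $H_n=H$, $G_n=G$ but recentred at a carefully chosen sequence of base points $v_n\to\infty$. The only condition that needs to be arranged by hand is the two-sided control \eqref{eq:Prop3.6cond}, i.e.\ that the chosen eigenfunction is, on a ball of growing radius about the new centre, bounded by a fixed multiple of its value at the centre. Everything else is automatic: each $\varphi_n$ is a genuine generalized eigenfunction, so $H\varphi_n=\lambda_n\varphi_n$ holds pointwise and hence $P_{v,r}H\varphi_n=\lambda_n P_{v,r}\varphi_n$ for every centre $v$ and every $r$; and by hypothesis~(a) we have $\lambda_n\to\lambda$.

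First I would fix one index $k\geq 2$ and invoke hypothesis~(b) to obtain subsequences $n_i\to\infty$ and $\ell_i\to\infty$ together with the vertices $v_i:=u^{(n_i)}_{\ell_i,k}$, which realise the maximum $q^{(n_i)}_{\ell_i,k}=|\varphi_{n_i}(v_i)|$ of $\varphi_{n_i}$ over the annulus $\{\,\ell_i(k-1)\leq|u|<\ell_i k\,\}$ and at which this maximum dominates, up to the factor $s$, the maxima over the two neighbouring annuli of the same width $\ell_i$. Since $|v_i|\geq\ell_i(k-1)\geq\ell_i\to\infty$, the centres $v_i$ leave every finite set, so after passing to a further subsequence we may assume that $\{v_i\}$ converges monotonically to infinity and that $q^{(n_i)}_{\ell_i,k}\neq0$.

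The heart of the argument is the geometric estimate controlling $\varphi_{n_i}$ on $B_{\ell_i}(v_i)$. For $w\in B_{\ell_i}(v_i)$ the triangle inequality for the graph distance gives $\big||w|-|v_i|\big|\leq\ell_i$, so $|w|\in[\ell_i(k-2),\ell_i(k+1))$; that is, $w$ lies in one of the three consecutive annuli of width $\ell_i$ indexed by $k-1,k,k+1$. The non-decay estimate from~(b) then yields
\[
\max_{w\in B_{\ell_i}(v_i)}|\varphi_{n_i}(w)|
\leq\max\big\{q^{(n_i)}_{\ell_i,k-1},\,q^{(n_i)}_{\ell_i,k},\,q^{(n_i)}_{\ell_i,k+1}\big\}
\leq s\,q^{(n_i)}_{\ell_i,k}
= s\,|\varphi_{n_i}(v_i)|.
\]
To match this with the ball $B_n(v_n)$ appearing in \eqref{eq:Prop3.6cond}, I would pass once more to a subsequence so that $\ell_i\geq i$ and relabel; then $B_i(v_i)\subseteq B_{\ell_i}(v_i)$ and the displayed bound gives \eqref{eq:Prop3.6cond} with $C=s$.

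With the triples $(H,G,v_i)$, the eigenfunctions $\varphi^{(i)}:=\varphi_{n_i}$, the eigenvalues $\lambda_{n_i}\to\lambda$, and $R_i:=i\to\infty$, all hypotheses of case~(b) of Proposition~\ref{prop:Rlimseq} are met, and it produces an $\mathcal{R}$-limit $(H',G',v_0')\in\mathcal{R}(H)$ together with a nonzero bounded generalized eigenfunction $\varphi'\in\ell^\infty(G')$ of $H'$ with eigenvalue $\lambda$, as required. The only genuinely delicate points are the synchronisation of the intrinsic control radius $\ell_i$ with the extrinsic radius demanded by the proposition, resolved by the diagonal passage $\ell_i\geq i$, and the non-vanishing $q^{(n_i)}_{\ell_i,k}\neq0$ of the centre value, which is what makes the normalisation used in Proposition~\ref{prop:Rlimseq} legitimate.
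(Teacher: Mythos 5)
There is a genuine gap: your key geometric estimate misreads hypothesis (b). You bound
\[
\max_{w\in B_{\ell_i}(v_i)}|\varphi_{n_i}(w)|
\;\leq\;\max\big\{q^{(n_i)}_{\ell_i,k-1},\,q^{(n_i)}_{\ell_i,k},\,q^{(n_i)}_{\ell_i,k+1}\big\}
\;\leq\; s\,q^{(n_i)}_{\ell_i,k},
\]
i.e.\ you compare the annulus indexed by $(\ell_i,k)$ with its neighbours in the \emph{second} index, $(\ell_i,k\pm1)$ (the adjacent annuli of the same width $\ell_i$). Hypothesis (b), however, controls only the neighbours in the \emph{first} index, $q^{(n_i)}_{\ell_i\pm1,k}$, i.e.\ the annuli $[(\ell_i\pm1)(k-1),(\ell_i\pm1)k)$; this is also the form in which the lemma is invoked in Propositions~\ref{prop:bounded} and \ref{prop:unbounded}, where the subsequence runs over the first index. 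So the first inequality in your display is correct, but the second does not follow from (b). With the actual hypothesis, the union of the three controlled annuli is $[(\ell_i-1)(k-1),(\ell_i+1)k)$ (gap-free once $\ell_i\geq k$), and this region contains only the ball of \emph{fixed} radius $k-1$ around $v_i=u^{(n_i)}_{\ell_i,k}$, not the ball of radius $\ell_i$; part of $B_{\ell_i}(v_i)$ (e.g.\ the vertices with $(\ell_i+1)k\leq|u|<\ell_i(k+1)$) is simply uncontrolled. Consequently you only get $\max_{u\in B_{k-1}(v_i)}|\varphi_{n_i}(u)|\leq s\,|\varphi_{n_i}(v_i)|$ with $k$ fixed, which is not enough to verify condition \eqref{eq:Prop3.6cond} of Proposition~\ref{prop:Rlimseq} (it demands control on balls whose radii tend to infinity along the sequence), and your application of case (b) of that proposition collapses.

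The missing idea is to let $k$ vary rather than fixing a single $k\geq 2$. The paper does this in two stages: for each $k$ it truncates $\varphi_{n_i}$ to $B_{k-1}(u^{(n_i)}_{\ell_i,k})$, normalizes, and passes (via Lemma~\ref{lem:CompConvGraph}) to an $\mathcal{R}$-limit $(H^{(k)},G^{(k)},v_0^{(k)})$, obtaining a limit function supported in $B_{k-1}(v_0^{(k)})$, bounded by $s$, equal to $1$ at the root, and satisfying the eigenvalue equation only up to radius $R_k=k-2$ (truncation destroys the equation near the boundary); it then applies Proposition~\ref{prop:Rlimseq} in case (a) to the sequence $\{(H^{(k)},G^{(k)},v_0^{(k)})\}_{k\in\mathbb{N}}$ with $R_k\to\infty$. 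Alternatively, your single-stage scheme could be repaired by a diagonal selection: for each $k$ pick one pair $(n_{i(k)},\ell_{i(k)})$ from the subsequences in (b) with $n_{i(k)}\geq k$ and $\ell_{i(k)}\geq k$, set $v_k:=u^{(n_{i(k)})}_{\ell_{i(k)},k}$, and apply Proposition~\ref{prop:Rlimseq}, case (b), to $\{(H,G,v_k)\}_k$ with the genuine eigenfunctions $\varphi_{n_{i(k)}}$, using the radius-$(k-1)$ control above together with an index shift so that the $m$-th term is controlled on a ball of radius $m$. But as written, your proof does not establish \eqref{eq:Prop3.6cond}, and the lemma is not proved.
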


\begin{proof}
We will prove in the following that for each $k\in\mathbb{N}$, there is an $\mathcal{R}$-limit $\big(H^{(k)},G^{(k)},v_0^{(k)}\big)$ of $H$ together with a $\varphi^{(k)}:V(G^{(k)})\to\mathbb{C}$ and $\lambda^{(k)}:=\lambda$ satisfying all the assumptions of Proposition~\ref{prop:Rlimseq}. This leads to the desired result.

\medskip

Let $k\in\mathbb{N}$. For the sake of simplifying the notation and since $k$ stays fixed until the last step 
of the proof, there is no loss of generality (by passing to a subsequence) in assuming that $n_i=i$ and $\ell_i\geq k$ for all $i\in\mathbb{N}$. Since $\ell_i\to\infty$, the vertices $u_{\ell_i,k}^{(i)}$ go to infinity if $i\to\infty$. Then there is no loss of generality in assuming that $\{(H,G,u_{\ell_i,k}^{(i)})\}_{i\in\mathbb{N}}$ converges to an $\mathcal{R}$-limit $\big(H^{(k)},G^{(k)},v_0^{(k)}\big)$, otherwise we can pass to a convergent subsequence by Lemma~\ref{lem:CompConvGraph}. Since $\ell_i\geq k> k-1$, the requirement (b) yields
$$
|\varphi_i(u)| \leq s |\varphi_i(u_{\ell_i,k}^{(i)})|
$$
for all $u\in V(G)$ satisfying $(\ell_i-1)(k-1) \leq |u| < (\ell_i+1)k$. Let $u\in B_{k-1}(u_{\ell_i,k}^{(i)})$. Since $\ell_i(k-1)\leq |u_{\ell_i,k}^{(i)}| < \ell_ik$ holds by definition, we conclude
$$
(\ell_i-1)(k-1) \leq |u_{\ell_i,k}^{(i)}| - (k-1) \leq |u| \leq |u_{\ell_i,k}^{(i)}| + (k-1) < (\ell_i+1)k.
$$
Hence, 
$$
\max_{u\in B_{k-1}(u_{\ell_i,k}^{(i)})} |\varphi_i(u)| \leq s |\varphi_i(u_{\ell_i,k}^{(i)})|
$$
follows. Define $\psi^{(k)}_i:V(G)\to \mathbb{C}$ by
$$
\psi^{(k)}_i(u) :=	
	\begin{cases}
		\frac{\varphi_i(u)}{|\varphi_i(u_{\ell_i,k}^{(i)})|}\qquad &\,, u\in B_{k-1}(u_{\ell_i,k}^{(i)}),\\
		0\qquad &\,, \text{ otherwise}.
	\end{cases}
$$
Then $\psi^{(k)}_i(u_{\ell_i,k}^{(i)})=1$ and $\|\psi^{(k)}_i\|_\infty \leq s$. Since $\{(H,G,u_{\ell_i,k}^{(i)})\}_{i\in\mathbb{N}}$ converges to $\big(H^{(k)},G^{(k)},v_0^{(k)}\big)$ with respect to the coherent maps $\{f_{i,r}:B_r(u_{\ell_i,k}^{(i)})\to B_r(v_0^{(k)})\}_{i,r\in\mathbb{N}}$, there is an $i_0\in\mathbb{N}$ such that $f_{i,k}$ is a graph isomorphism for all $i\geq i_0$. Define for $i\geq i_0$, $\varphi_i^{(k)}:V(G^{(k)})\to\mathbb{C}$ by $\varphi_i^{(k)}:= \mathcal{I}_{f_{i,k}}\psi_i^{(k)}$. We remind the reader that this formally just defines a function on $\ell^2\big(B_k(v_0^{(k)})\big)$ that we embed into $\ell^2(G^{(k)})$ by extending it by zero. Then we deduce for $i\geq i_0$
$$
\text{supp}(\varphi_i^{(k)})\subseteq B_{k-1}(v_0^{(k)})
	\,,\quad
	\varphi_i^{(k)}\left(v_0^{(k)}\right)=1
	\quad\text{ and }\quad
	\|\varphi_i^{(k)}\|_\infty\leq s.
$$
Since $B_{k-1}(v_0^{(k)})$ is finite, there is no loss of generality (by passing to another subsequence) that $\{\varphi_i^{(k)}\}_{i\in\mathbb{N}}$ converges pointwise (and so in $\ell^2$-norm as all functions are supported on $B_{k-1}(v_0^{(k)})$) to a map $\varphi^{(k)}:V(G^{(k)})\to\mathbb{C}$ such that $\text{supp}(\varphi^{(k)})\subseteq B_{k-1}(v_0^{(k)})$, $\varphi^{(k)}\left(v_0^{(k)}\right)=1$ and $0<\|\varphi^{(k)}\|\leq s$. Thus, 
$$
\sup_{v\in B_k(v_0^{(k)})} |\varphi^{(k)}(v)| \leq s = s \left|\varphi^{(k)}\left(v_0^{(k)}\right)\right|
$$
Now we are almost in the setting of Proposition~\ref{prop:Rlimseq}. More precisely, we need to show that $\varphi^{(k)}$ are ``approximate generalized eigenfunctions''. 

\medskip

Let $r\leq k-2$. Then the convergence of $\{(H,G,u_{\ell_i,k}^{(i)})\}_{i\in\mathbb{N}}$ to $\big(H^{(k)},G^{(k)},v_0^{(k)}\big)$ and the $\ell^2$-convergence of $\{\varphi_i^{(k)}\}_{i\in\mathbb{N}}$ to $\varphi^{(k)}$ imply the following: For all $\varepsilon>0$, there is an $i_1\in\mathbb{N}$ such that for $i\geq i_1$,
\begin{align*}
\left\| \mathcal{I}_{f_{i,r+1}} H_{u_{\ell_i,k}^{(i)},r+1}\mathcal{I}_{f_{i,r+1}} - H^{(k)}_{v_0^{(k)},r+1} \right\| 
	< &\frac{\varepsilon}{2s \|\varphi^{(k)}\|_2},\\
\left\|\varphi_i^{(k)} - \varphi^{(k)}\right\|_2
	< &\frac{\varepsilon}{8 \max\{\|H\|,|\lambda_i|,1\}},\\
\left|\lambda_i - \lambda\right|
	< &\frac{\varepsilon}{8 \|\varphi^{(k)}\|_2}.
\end{align*}
Note that $\varphi^{(k)}$ is supported on $B_{k-1}(v_0^{(k)})$ and so $\|\varphi^{(k)}\|_2$ is finite. Furthermore $\lim_{i\to\infty}\lambda_i=\lambda$ holds implying $|\lambda_i|$ is uniformly bounded in $i\in\mathbb{N}$. According to Lemma~\ref{lem:RlimProp}, $H^{(k)}$ is a Schr\"odinger operator of the form \eqref{eq:H} and $\|H^{(k)}\|\leq 2\|H\|$. Using \eqref{eq:H}, we derive 
$$
P_{v_0^{(k)},r} H^{(k)}\varphi^{(k)} = P_{v_0^{(k)},r} H^{(k)}_{v_0^{(k)},r+1}P_{v_0^{(k)},r+1}\varphi^{(k)}.
$$
Let $v\in B_r(v_0^{(k)})$. Then 
$$
\big(\mathcal{I}_{f_{i,r+1}}P_{u_{\ell_i,k}^{(i)},r+1}\mathcal{I}_{f_{i,k}}^{-1} \zeta\big)(v)
	= \zeta(v),\qquad \zeta\in\ell^2(G^{(k)}),
$$ 
holds as the maps are coherent and $r+1\leq k-1$. Denote by $\chi_{k,r}$ the characteristic function of $B_r(v_0^{(k)})$. Then, a short computation gives
\begin{align*}
\left(P_{v_0^{(k)},r}\mathcal{I}_{f_{i,r+1}}H_{u_{\ell_i,k}^{(i)},r+1}\mathcal{I}_{f_{i,r+1}}^{-1} P_{v_0^{(k)},r+1} \varphi^{(k)}_i\right)(v)
	= & \chi_{k,r}(v) \Big( \mathcal{I}_{f_{i,r+1}}H_{u_{\ell_i,k}^{(i)},r+1}\psi_i^{(k)} \Big)(v)\\
	= & \chi_{k,r}(v)\, \Big(H\psi_i^{(k)}\big)\Big(f^{-1}_{i,r+1}(v)\big)\\
	= & \lambda_i\, \Big(P_{v_0^{(k)},r}\mathcal{I}_{f_{i,r+1}}\psi_i^{(k)}\Big)(v)\\
	= & \lambda_i\, \Big(P_{v_0^{(k)},r}\varphi_i^{(k)}\Big)(v)
\end{align*}
invoking the definition of $\psi_i^{(k)}$ and the fact that $\varphi_i$ is a generalized eigenfunction of $H$ with eigenvalue $\lambda_i$. With this at hand, the triangle inequality leads to
$$
\left\| P_{v_0^{(k)},r} H^{(k)}\varphi^{(k)} -\lambda P_{v_0^{(k)},r}\varphi^{(k)} \right\|
	\leq T_1 + T_2 + T_3 + T_4
$$
where
\begin{align*}
T_1 := &\left\| P_{v_0^{(k)},r} H^{(k)}_{v_0^{(k)},r+1}P_{v_0^{(k)},r+1}\varphi^{(k)} 
		- P_{v_0^{(k)},r}\mathcal{I}_{f_{i,r+1}}H_{u_{l_i,k}^{(i)},r+1}\mathcal{I}_{f_{i,r+1}}^{-1} P_{v_0^{(k)},r+1} \varphi^{(k)}\right\| ,\\
T_2 := & \left\|P_{v_0^{(k)},r}\mathcal{I}_{f_{i,r+1}}H_{u_{l_i,k}^{(i)},r+1}\mathcal{I}_{f_{i,r+1}}^{-1} P_{v_0^{(k)},r+1} \varphi^{(k)} - P_{v_0^{(k)},r}\mathcal{I}_{f_{i,r+1}}H_{u_{l_i,k}^{(i)},r+1}\mathcal{I}_{f_{i,r+1}}^{-1} P_{v_0^{(k)},r+1} \varphi^{(k)}_i\right\|,\\
T_3:= & \left\|P_{v_0^{(k)},r}\mathcal{I}_{f_{i,r+1}}H_{u_{l_i,k}^{(i)},r+1}\mathcal{I}_{f_{i,r+1}}^{-1} P_{v_0^{(k)},r+1} \varphi^{(k)}_i - \lambda_i P_{v_0^{(k)},r}\varphi^{(k)} \right\|,\\
T_4:= & \left\|\lambda_i P_{v_0^{(k)},r}\varphi^{(k)} - \lambda P_{v_0^{(k)},r}\varphi^{(k)} \right\| .
\end{align*}
Then the previous considerations yield
\begin{alignat*}{2}
&T_1< \frac{\varepsilon}{2},\qquad
&T_2\leq 2\|H\| \|\varphi^{(k)} - \varphi^{(k)}_i\| < \frac{\varepsilon}{4} ,\\
&T_3 = |\lambda_i| \big\|P_{v_0^{(k)},r}\varphi_i^{(k)} - P_{v_0^{(k)},r}\varphi^{(k)}\big\| < \frac{\varepsilon}{8},\qquad
&T_4 = |\lambda_i-\lambda| \big\|P_{v_0^{(k)},r}\varphi^{(k)}\big\| <\frac{\varepsilon}{8}
\end{alignat*}
Thus, $\big\| P_{v_0^{(k)},r} H^{(k)}\varphi^{(k)} -\lambda P_{v_0^{(k)},r}\varphi^{(k)} \big\|<\varepsilon$ follows implying
$$
P_{v_0^{(k)},r} H^{(k)}\varphi^{(k)} = \lambda P_{v_0^{(k)},r}\varphi^{(k)}
	\,,\qquad r\leq k-2\,,
$$
as $\varepsilon>0$ was arbitrary.

\medskip

Combining all the previous considerations, Proposition~\ref{prop:Rlimseq} applies for the sequence $\{(H^{(k)},G^{(k)},u^{(k)})\}_{k\in\mathbb{N}}$, the sequence $\{\varphi^{(k)}\}_{k\in\mathbb{N}}$, $R_k:=k-2$, $\lambda_k:=\lambda$ and $C:=s$. Hence, there is an $\mathcal{R}$-limit $(H',G',v'_0)$ of $H$ and a generalized eigenfunction $0\neq \varphi'\in\ell^\infty(G')$ of $H'$ with eigenvalue $\lambda$ finishing the proof.
\end{proof}

\medskip

\subsection{The existence and the behavior at infinity of generalized eigenfunctions for \texorpdfstring{$\mathcal{R}$}{R}-limits}

The next two statements rely on Proposition~\ref{prop:Rlimseq}. We show that under certain conditions the existence of a generalized eigenfunction $\varphi$ of $H$ results in the existence of a bounded generalized eigenfunction of some $\mathcal{R}$-limit of $H$ or it gives constraints on the growth behavior of $\varphi$ at infinity. The first proposition treats bounded generalized eigenfunctions, while in the second proposition they are unbounded. For the proof, an auxiliary lemma is needed for each proposition.

\begin{lemma} \label{lem:bounded}
Let $s>1$ and $\{a_n\}_{n\in\mathbb{N}}$ be a sequence of non-negative numbers such that $\lim _{n\to\infty} a_n=0$.
Then either there exists $C>0$ such that
\begin{equation} \label{eq:expdec}
a_{n}<C s^{-n}, \qquad n\in\mathbb{N}.
\end{equation}
or there exists a subsequence $\left\{a_{n_k}\right\}_{k=1}^\infty$ such that for any $k\in\mathbb{N}$,
\begin{equation} \label{eq:decsubseq}
a_{n_k}>a_{n_k+1} \quad\text{ and }\quad s a_{n_k}\geq a_{n_k-1}.
\end{equation}
\end{lemma}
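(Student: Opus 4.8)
The plan is to reduce everything to the auxiliary sequence $b_n := a_n s^n$, $n\in\mathbb{N}$, and to observe that the first alternative \eqref{eq:expdec} holds precisely when $(b_n)_n$ is bounded: indeed $a_n < Cs^{-n}$ for all $n$ is the same as $b_n < C$ for all $n$, so if $\sup_n b_n < \infty$ one simply takes $C := \sup_n b_n + 1$. It therefore remains to treat the case where $(b_n)_n$ is unbounded and to produce the subsequence of \eqref{eq:decsubseq}. The point is that the two inequalities in \eqref{eq:decsubseq} translate cleanly into monotonicity statements for $b$: a quick computation gives $a_{n}>a_{n+1}\iff b_{n+1}<s b_n$ and $s a_n \ge a_{n-1}\iff b_n\ge b_{n-1}$. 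Call an index $n$ \emph{good} if $b_n\ge b_{n-1}$ and $b_{n+1}<s b_n$; the task is then to show that there are infinitely many good indices, from which the subsequence $\{n_k\}$ is obtained by listing them in increasing order.

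I would argue by contradiction: suppose only finitely many indices are good, and fix $n_0$ so large that no index $n>n_0$ is good. Since removing finitely many terms does not affect unboundedness, there is an index exceeding $n_0$ at which $b$ is larger than $b_{n_0}$; let $m$ be the smallest index $>n_0$ with $b_m>b_{n_0}$. By minimality $b_j\le b_{n_0}<b_m$ for all $n_0\le j<m$, so $m$ is a \emph{record}, and in particular $b_m>b_{m-1}$, i.e.\ the first good-condition holds at $m$. As $m>n_0$ is not good, the second condition must fail, forcing $b_{m+1}\ge s b_m>b_m$. Hence $m+1$ is again a record and the same dichotomy applies to it.

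Iterating this step is the heart of the argument. Either the process eventually reaches a good index $>n_0$, contradicting the choice of $n_0$; or it never does, producing a chain of consecutive records $m,m+1,m+2,\ldots$ along which $b_{m+i}\ge s\, b_{m+i-1}$, hence $b_{m+i}\ge s^{i} b_m$ for all $i\ge 0$. Translating back via $b_n=a_n s^n$ gives $a_{m+i}\ge a_m>0$ for all $i$, which contradicts $a_n\to 0$. In either case we reach a contradiction, so the set of good indices is infinite and \eqref{eq:decsubseq} follows.

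I expect the main obstacle to be exactly this self-propagation step: recognizing that the failure of the good condition at a record forces $b$ to jump up by a factor at least $s$ to the next index, which is then itself a record, so that the only way to avoid an impossible geometric blow-up (incompatible with $a_n\to0$) is to run into a good index. Once the conditions are rephrased through $b_n=a_ns^n$, the remaining verifications — the two equivalences of the inequalities and the elementary estimate $a_{m+i}\ge a_m$ — are routine.
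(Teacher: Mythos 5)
Your proof is correct, but it takes a genuinely different route from the paper's. The paper works directly with $a_n$: it sets $N_1=\{n\in\mathbb{N} \,:\, a_n\ge\max_{k>n}a_k\}$ (infinite because $a_n\to0$) and $N_2=N_1\cap\{n \,:\, a_{n-1}\le s\,a_n\}$, and splits on whether $N_2$ is infinite; if it is, $N_2$ itself provides the subsequence of \eqref{eq:decsubseq}, and if not, the paper propagates membership in $N_1$ \emph{backwards} (beyond some $n_0$, each $n\in N_1$ forces $a_{n-1}>s\,a_n$ and hence $n-1\in N_1$), which yields the geometric decay \eqref{eq:expdec}. You instead negate the first alternative: after the substitution $b_n=a_ns^n$, condition \eqref{eq:expdec} is precisely boundedness of $(b_n)$, and in the unbounded case you propagate the record property \emph{forwards} (failure of goodness at a record forces $b_{m+1}\ge s\,b_m$, making $m+1$ a record), so that avoiding good indices produces $a_{m+i}\ge a_m>0$, contradicting $a_n\to 0$. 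In effect your substitution converts this lemma into the statement of the paper's companion Lemma~\ref{lem:unbounded} for unbounded sequences, and your record-propagation argument is essentially the one the paper uses there; so your approach buys a unified treatment of both lemmas, at the cost of an argument by contradiction where the paper's construction is direct. A small bonus of your route: the good condition $b_{n+1}<s\,b_n$ delivers the strict inequality $a_{n_k}>a_{n_k+1}$ of \eqref{eq:decsubseq} exactly, whereas the paper's set $N_1$ only gives $a_{n_k}\ge a_{n_k+1}$ (a harmless imprecision there, but your version is cleaner on this point).
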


\begin{proof}
Define the set
\[
N_1:=\left\{ n\in\mathbb{N}\,|\,a_{n}\geq\max_{k>n}a_{k}\right\}.
\]
Since $\lim_{n\to\infty}a_n=0$, $N_1$ is infinite. Define further,
\[
N_2:=N_1\cap\left\{ n\in\mathbb{N}\,|\,a_{n-1}\leq s a_{n}\right\}.
\]
If $N_2$ is infinite we get a subsequence satisfying \eqref{eq:decsubseq}. Otherwise, $N_2$ is finite. Since $N_1$ is infinite and $N_2$ is finite, there is an $n_0\in N_1$ such that $a_{n-1}> s a_{n}$ for all $n\geq n_0$. Let $n\in N_1$ be such that $n\geq n_0$. Then
$$
a_n< s a_n<a_{n-1}
$$
follows implying $n-1\in N_1$. If $n-1\geq n_0$, one derives similarly $a_{n-2}>s a_{n-1}>r^2 a_n$. Recursively, we get $a_{n_0}>s^{n-n_0}a_n$ implying
$$
a_n < a_{n_0} s^{n_0} s^{-n}\,,\qquad n\geq n_0.
$$
Thus, \eqref{eq:expdec} follows proving the statement.
\end{proof}

\begin{prop} \label{prop:bounded}
Let $H$ be a Schr\"odinger operator on an infinite, connected rooted $d$-bounded graph $(G,v_0)$ and $\varphi\in\ell^\infty(G)$ be a bounded generalized eigenfunction of $H$ with eigenvalue $\lambda$ that does not vanish everywhere. Then one of the following assertions holds.
\begin{enumerate}
\item[(a)] There exists an $\mathcal{R}$-limit $(H',G',v'_0)$ of $H$, and a bounded generalized eigenfunction $0\neq\varphi'\in\ell^{\infty}(G')$ of $H'$ with eigenvalue $\lambda$.
\item[(b)] There are constants $\gamma>1$ and $C>0$ such that 
$$
|\varphi\left(u\right)|\leq C\cdot \gamma^{-\left|u\right|}
	\,,\qquad u\in G\,.
$$
\end{enumerate}
\end{prop}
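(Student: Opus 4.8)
The plan is to establish the dichotomy by first splitting on whether $\varphi$ tends to zero at infinity, and then, in the decaying case, feeding a scale-by-scale version of the scalar dichotomy Lemma~\ref{lem:bounded} into Lemma~\ref{lem-q-Rlimit}. Set $M:=\|\varphi\|_\infty$ and $a_n:=\sup_{|u|=n}|\varphi(u)|\le M$, noting $\mathcal{S}_n(v_0)\neq\emptyset$ for every $n$ since $(G,v_0)$ is infinite and connected.

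I would first dispose of the case $a_n\not\to 0$. Then there is $c>0$ and, after passing to a subsequence, a sequence $u_n$ with $|u_n|$ strictly increasing to infinity and $|\varphi(u_n)|\ge c$. Taking $H_n:=H$, $G_n:=G$, $v_n:=u_n$, $\varphi^{(n)}:=\varphi$, $\lambda_n:=\lambda$ and $R_n:=n$, the eigenvalue relation $P_{v_0,r}H\varphi=\lambda P_{v_0,r}\varphi$ holds for all $r$, and since $|\varphi|\le M$ everywhere we get $\max_{u\in B_n(u_n)}|\varphi(u)|\le M\le (M/c)\,|\varphi(u_n)|$ irrespective of the radius, so \eqref{eq:Prop3.6cond} holds with $C:=M/c$. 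Hence the option~(b) of Proposition~\ref{prop:Rlimseq} immediately produces an $\mathcal{R}$-limit carrying a bounded generalized eigenfunction with eigenvalue $\lambda$, which is assertion~(a).

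It remains to treat the case $a_n\to 0$, i.e.\ $|\varphi(u)|\to 0$ as $|u|\to\infty$. Fix once and for all $s>1$. The point is that peaks on a single sphere only control balls of radius one, so instead I would work with the block maxima $q_{\ell,k}=\max_{\ell(k-1)\le |u|<\ell k}|\varphi(u)|$, which Lemma~\ref{lem-q-Rlimit} converts into peaks controlling balls of radius $k-1\to\infty$. For each fixed $k\ge 2$ the left endpoint $\ell(k-1)\to\infty$ as $\ell\to\infty$, so $q_{\ell,k}\to 0$, and I apply Lemma~\ref{lem:bounded} to the sequence $(q_{\ell,k})_{\ell}$ with this $s$. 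If for \emph{some} $k$ the exponential alternative \eqref{eq:expdec} holds, $q_{\ell,k}<C_k s^{-\ell}$, then every $u$ with $|u|=m$ large enough (beyond a $k$-dependent threshold, past which consecutive blocks cover $\mathbb{N}$ contiguously) lies in a block of index $\ell>m/k$, whence $|\varphi(u)|\le q_{\ell,k}<C_k s^{-m/k}=C_k\,(s^{1/k})^{-|u|}$; absorbing the finitely many remaining vertices into the constant yields assertion~(b) with $\gamma:=s^{1/k}>1$.

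In the complementary case, for \emph{every} $k\ge 2$ the alternative \eqref{eq:decsubseq} holds, giving $\ell_i\to\infty$ with $q_{\ell_i,k}>q_{\ell_i+1,k}$ and $s\,q_{\ell_i,k}\ge q_{\ell_i-1,k}$, hence $q_{\ell_i,k}\ge \tfrac1s\max\{q_{\ell_i+1,k},q_{\ell_i-1,k}\}$. Choosing $\varphi_n:=\varphi$ and $\lambda_n:=\lambda$ makes hypothesis~(a) of Lemma~\ref{lem-q-Rlimit} automatic, and the above is precisely hypothesis~(b) of that lemma for the common constant $s$ and all $k$; the lemma then delivers an $\mathcal{R}$-limit with a bounded generalized eigenfunction of eigenvalue $\lambda$, which is assertion~(a). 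I expect the main obstacle to be conceptual rather than computational: namely, recognizing that Lemma~\ref{lem:bounded} must be applied at \emph{every} scale $k$ to the block maxima $q_{\ell,k}$, and not once to the sphere maxima $a_n$, so that a single occurrence of the exponential alternative translates into vertex-wise exponential decay while its failure at all scales matches exactly the hypothesis of Lemma~\ref{lem-q-Rlimit}. The only genuine bookkeeping is the passage from $s^{-\ell}$ to $(s^{1/k})^{-|u|}$ together with the verification that all large $|u|$ indeed fall into some block.
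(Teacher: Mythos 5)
Your proof is correct and follows essentially the same route as the paper's: a dichotomy on whether $\varphi$ vanishes at infinity, with the non-vanishing case handled by Proposition~\ref{prop:Rlimseq} along peak vertices going to infinity, and the vanishing case split via Lemma~\ref{lem:bounded} at every scale $k$ into either exponential decay (assertion (b), with the same $s^{-\ell}\to(s^{1/k})^{-|u|}$ conversion) or the hypothesis of Lemma~\ref{lem-q-Rlimit} (assertion (a)). Your use of sphere maxima $a_n$ rather than block maxima $q_{m,k}$ to trigger the first case, and your explicit treatment of the threshold past which the blocks cover all radii, are minor refinements of the paper's argument (they in fact sidestep the degenerate scale $k=1$, which the paper glosses over), not a different approach.
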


\begin{proof}
Let $k\in\mathbb{N}$. Then there is a $u_{m,k}\in V(G)$ such that $m(k-1)\leq |u_{m,k}| < mk$ and
\[
|\varphi(u_{m,k})| = \max_{m\left(k-1\right)\leq\left|u\right|<mk}\left|\varphi\left(u\right)\right|
	=: q_{m,k}
\]
Note that the latter is a maximum as $B_r(v_0)$ is finite for every $r\in\mathbb{N}$. We will treat the two cases where \textbf{(A)} $\{q_{m,k}\}_{m\in\mathbb{N}}$ does not tend to zero for some $k\in\mathbb{N}$ and \textbf{(B)} $\{q_{m,k}\}_{m\in\mathbb{N}}$ does tend to zero for all $k\in\mathbb{N}$.

\medskip

\textbf{(A)} Suppose $k\in\mathbb{N}$ is chosen such that $\{q_{m,k}\}_{m\in\mathbb{N}}$ does not tend to zero. Let $\left\{q_{m_\ell,k}\right\}_{\ell\in\mathbb{N}}$ be a subsequence such that $\lim_{\ell\to\infty}q_{m_\ell,k}= q>0$. Such a subsequence exists as $\{q_{m,k}\}_{m\in\mathbb{N}}$ is uniformly bounded since $\varphi\in\ell^\infty(G)$.
Choose $\ell_0\in\mathbb{N}$ such that $q_{m_{\ell},k}>q/2$ for every $\ell>\ell_0$. Then,
\[
\max_{u\in B_{\ell}(u_{m_{\ell},k})}
 \left|\varphi(u)\right|
	\leq \left\| \varphi \right\|_\infty 
	\leq \frac{2 \|\varphi\|_\infty}{q}\, \left|\varphi\left(u_{m_{\ell},k}\right)\right|
	\neq0.
\]
Thus, all the requirements of Proposition~\ref{prop:Rlimseq} are satisfied for the sequence $\{(H,G,u_{m_\ell,k})\}_{\ell\in\mathbb{N}}$ with $\lambda_n=\lambda$, $\varphi^{(n)}=\varphi$ and $C:= \frac{2\|\varphi\|_\infty}{q}$. Hence, the statement (a) of Proposition~\ref{prop:bounded} follows from Proposition~\ref{prop:Rlimseq}.

\medskip

\textbf{(B)} Suppose now that $\lim_{m\to\infty}q_{m,k}=0$ for all $k\in\mathbb{N}$ and let $s>1$. Due to Lemma~\ref{lem:bounded}, either \textbf{(B.1)} there is a $k\in\mathbb{N}$ such that \eqref{eq:expdec} holds for a suitable constant $C>0$ and $a_m:=q_{m,k}$ or \textbf{(B.2)} for all $k\in\mathbb{N}$, there is a subsequence $\{a_{m_i}\}_{i\in\mathbb{N}}$ satisfying \eqref{eq:decsubseq}.

\medskip

\textbf{(B.1)} There exists a $k\in\mathbb{N}$ and a constant $C>0$ such that 
\[
q_{m,k}<C s^{-m}\,,\qquad m\in\mathbb{N}.
\]
If $u\in V(G)$, then there is an $m\in\mathbb{N}$ such that $m(k-1)\leq |u| < mk$ and so $-m<-\frac{|u|}{k}$. Consequently, the latter considerations yield
\[
q_{m,k}<C s^{-\frac{|u|}{k}}.
\]
Specifically, $\varphi$ is exponentially decaying as claimed in (b) with $\gamma:=s^{\frac{1}{k}}>1$.

\medskip

\textbf{(B.2)} For all $k\in\mathbb{N}$, there is a subsequence $\left\{q_{m_i,k}\right\}_{i\in\mathbb{N}}$ satisfying
\begin{equation}\label{eq:(prop:bounded)}
q_{m_i,k}>q_{m_i+1,k} 
	\quad\text{ and }\quad 
	s\, q_{m_i,k}\geq q_{m_i-1,k}.
\end{equation}
Hence,
$$
q_{m_i,k} \geq \frac{1}{s} \max\big\{ q_{m_i-1,k} ,\, q_{m_i+1,k} \big\}
$$
follows and $\varphi$ is a generalized eigenfunction of $H$ with eigenvalue $\lambda$. Thus, Lemma~\ref{lem-q-Rlimit} applied to $\varphi_j:=\varphi$ and $\lambda_j:=\lambda$ implies (a).
\end{proof}

Next, unbounded generalized eigenfunctions are studied in Proposition~\ref{prop:unbounded}. For this the following lemma will be useful.

\begin{lemma} \label{lem:unbounded}
Let $\{a_n\}_{n\in\mathbb{N}}$ be an unbounded sequence of non-negative numbers such that there is an $s>1$ and a $C>0$ satisfying
$$
a_n<C s^n\,,\qquad n\in\mathbb{N}.
$$
Then, for each $\gamma>s$, there exists a subsequence $\left\{a_{n_k}\right\}_{k=1}^\infty$ such that for all $k\in\mathbb{N}$,
\[
a_{n_k}>a_{n_k-1} \quad\text{ and }\quad \gamma a_{n_k}\geq a_{n_k+1}.
\]
\end{lemma}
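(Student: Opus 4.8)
The plan is to mirror the structure of Lemma~\ref{lem:bounded}, but working with \emph{backward} records suited to a growing sequence rather than the forward records used there for a decaying one. First I would introduce the set of strict running maxima
\[
M_1 := \left\{ n\in\mathbb{N} \,:\, a_n > a_m \text{ for all } m<n \right\}.
\]
Since $\{a_n\}_{n\in\mathbb{N}}$ is non-negative and unbounded, the running maximum $\max_{m\leq n} a_m$ is non-decreasing and tends to $+\infty$; each strict increase occurs at an index in $M_1$, so $M_1$ is infinite, and its values $a_n$ strictly increase along $M_1$ and are unbounded. By definition every $n\in M_1$ already satisfies the first required inequality $a_n>a_{n-1}$.

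Next I would intersect with the condition controlling the forward jump,
\[
M_2 := M_1 \cap \left\{ n\in\mathbb{N} \,:\, a_{n+1}\leq \gamma\, a_n \right\}.
\]
If $M_2$ is infinite, enumerating it increasingly produces a subsequence $\{a_{n_k}\}_{k\in\mathbb{N}}$ satisfying both $a_{n_k}>a_{n_k-1}$ and $\gamma a_{n_k}\geq a_{n_k+1}$, which is exactly the claim. So it remains to rule out the case that $M_2$ is finite.

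Assuming $M_2$ finite, there is an $n_0$ such that $a_{n+1}>\gamma a_n$ for every $n\in M_1$ with $n\geq n_0$. The key step is to show that records then propagate: if $n\in M_1$ and $n\geq n_0$, then $a_{n+1}>\gamma a_n\geq a_n$ (using $\gamma>1$ and $a_n\geq0$), so $a_{n+1}>a_m$ for all $m\leq n$, whence $n+1\in M_1$. Picking some $n_1\in M_1$ with $n_1\geq n_0$ and $a_{n_1}>0$ (possible since the values along $M_1$ tend to infinity) and iterating yields $a_{n_1+j}>\gamma^j a_{n_1}$ for all $j\geq0$, that is, $a_n > (a_{n_1}\gamma^{-n_1})\,\gamma^n$ for every $n\geq n_1$.

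Finally I would play this lower bound against the hypothesis $a_n<Cs^n$: it forces $(a_{n_1}\gamma^{-n_1})(\gamma/s)^n<C$ for all $n\geq n_1$, and since $\gamma>s$ the factor $(\gamma/s)^n$ diverges while $a_{n_1}\gamma^{-n_1}>0$ is a fixed positive constant, a contradiction. Hence $M_2$ must be infinite and the proof concludes. The only genuinely delicate point is the record-propagation step; once it is established, the assumed growth rate $\gamma>s$ collides immediately with the exponential upper bound, so the exponential estimate at the end is routine.
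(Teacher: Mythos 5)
Your proof is correct and takes essentially the same route as the paper's: define the set of record indices, intersect it with the forward-jump condition $a_{n+1}\leq\gamma a_n$, and, when that intersection is assumed finite, derive a contradiction by propagating records to get growth $a_n>C_1\gamma^n$, which collides with $a_n<Cs^n$ since $\gamma>s$. If anything, your version is slightly tighter: using \emph{strict} records directly yields the strict inequality $a_{n_k}>a_{n_k-1}$ demanded by the statement (the paper's non-strict records literally give only $a_{n_k}\geq a_{n_k-1}$), and you explicitly secure $a_{n_1}>0$ before iterating, a point the paper leaves implicit.
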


\begin{proof}
Define
\[
N_1=\left\{ n\in\mathbb{N}\,|\,a_{n}\geq\max_{m\leq n}a_{m}\right\} .
\]
Since $a_{n}$ is unbounded,  $\sharp N_1=\infty$.
Define further,
\[
N_2=N_1\cap\left\{ n\in\mathbb{N}\,|\,a_{n+1}\leq \gamma a_{n}\right\}.
\]
Clearly, the statement is proven if $N_2$ is infinite. Assume by contradiction that $N_2$ is finite. Then there exists an $n_0\in\mathbb{N}$ so that for every $n\geq n_0$, $a_{n+1}> \gamma a_{n}$. Since $N_1$ is infinite, there is an $n_1\in N_1$ such that $n_1\geq n_0$. Then a short induction yields that for $n\geq n_1$, we have 
$$
n\in N_1
 \quad \text{ and }\quad
 a_n>\gamma^{n-n_1}a_{n_1}.
$$ 
For indeed, since $n_1\geq n_0$, we get
$$
a_{n_1+1}>\gamma a_{n_1} > a_{n_1} \geq \max_{m\leq n_1}a_{m}.
$$
Thus, $n_1+1\in N_1$ and so the base case is proven. Analogously, the induction step follows.

\medskip

Hence, there is a constant $C_1>0$ such that $a_n> C_1 \gamma^n$ for all $n\in\mathbb{N}$. Since $\gamma>s$, this contradicts the assumption $a_n<C s^n$ for some constant $C>0$. Hence, $N_2$ is infinite.
\end{proof}


\begin{prop} 
\label{prop:unbounded}
Let $\varphi$ be a unbounded generalized eigenfunction of $H$ with eigenvalue $\lambda$. Then one of the following assertions holds.
\begin{enumerate}
\item[(a)] There exists an $\mathcal{R}$-limit $(H',G',v'_0)$ of $H$, and a bounded generalized
eigenfunction $\varphi'\in\ell^{\infty}(G')$ of $H'$ with eigenvalue $\lambda$ that does not vanish everywhere.
\item[(b)] There exists a constant $\gamma>1$, such that for all $C>0$, there is a $u=u(C)\in V(G)$ satisfying $|\varphi\left(u\right)|\geq C\, \gamma^{\left|u\right|}$.
\end{enumerate}
\end{prop}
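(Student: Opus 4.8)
The plan is to mirror the structure of the proof of Proposition~\ref{prop:bounded}, but with the roles of growth and decay reversed, using Lemma~\ref{lem:unbounded} in place of Lemma~\ref{lem:bounded}. As before, for each $k\in\mathbb{N}$ I would pick vertices $u_{m,k}\in V(G)$ with $m(k-1)\leq |u_{m,k}| < mk$ realizing the sphere-shell maximum
\[
q_{m,k} := \max_{m(k-1)\leq |u| < mk} |\varphi(u)| = |\varphi(u_{m,k})|,
\]
which exists since each ball is finite. Note first that since $\varphi$ is a generalized eigenfunction of the bounded operator $H$, it has at most exponential growth: a direct estimate using \eqref{eq:H} gives $|\varphi(u)|\leq (2d+\|W\|_\infty+|\lambda|)$-type control propagating across spheres, so there exist $s>1$ and $C_0>0$ with $q_{m,k}<C_0\, s^m$ for all $m$. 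The dichotomy then splits on whether, for the fixed energy $\lambda$, there is some shell-width $k$ along which the sequence $\{q_{m,k}\}_{m\in\mathbb{N}}$ stays \emph{bounded}, or whether $\{q_{m,k}\}_{m\in\mathbb{N}}$ is unbounded for every $k$.

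In the first case, choose $k$ such that $\{q_{m,k}\}_{m}$ is bounded. Since $\varphi$ is unbounded, the uniform bound over shells forces $q_{m,k}$ to be bounded \emph{below away from zero} along a subsequence as well (otherwise one checks $\varphi$ would be bounded); extracting $\{q_{m_\ell,k}\}_\ell$ with $q_{m_\ell,k}\to q>0$, I would verify exactly as in case \textbf{(A)} of Proposition~\ref{prop:bounded} that
\[
\max_{u\in B_\ell(u_{m_\ell,k})} |\varphi(u)| \leq C\,|\varphi(u_{m_\ell,k})| \neq 0
\]
holds with $C$ determined by the uniform bound on $\{q_{m,k}\}_m$ divided by $q/2$. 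This is precisely the hypothesis \eqref{eq:Prop3.6cond} of Proposition~\ref{prop:Rlimseq}, applied to the sequence $\{(H,G,u_{m_\ell,k})\}_\ell$ with $\lambda_n=\lambda$, $\varphi^{(n)}=\varphi$, and $R_n=\ell$. Proposition~\ref{prop:Rlimseq} then yields conclusion (a).

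In the remaining case, $\{q_{m,k}\}_{m}$ is unbounded for every $k$, and I would fix any $\gamma>s$ and invoke Lemma~\ref{lem:unbounded} with $a_m:=q_{m,k}$. This produces, for each $k$, a subsequence $\{q_{m_i,k}\}_i$ satisfying $q_{m_i,k}>q_{m_i-1,k}$ and $\gamma\, q_{m_i,k}\geq q_{m_i+1,k}$, hence
\[
q_{m_i,k} \geq \frac{1}{\gamma}\max\big\{ q_{m_i-1,k},\, q_{m_i+1,k} \big\}.
\]
This is condition (b) of Lemma~\ref{lem-q-Rlimit} (with $s$ there taken to be $\gamma$), while condition (a) there holds trivially because $\lambda_n=\lambda$ is constant. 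Lemma~\ref{lem-q-Rlimit} then delivers an $\mathcal{R}$-limit $(H',G',v_0')$ together with a nonzero bounded generalized eigenfunction at eigenvalue $\lambda$, which is again conclusion (a). Finally, if neither branch forces (a)---i.e.\ the alternative in Lemma~\ref{lem:unbounded} never produces the good subsequence---the mechanism that obstructs it is exactly the one producing genuine exponential growth, giving conclusion (b) with the $\gamma>1$ inherited from the lower bound $q_{m,k}>C_1\gamma^{m}$ and the translation $-m\le -|u|/k$ from shells to distances. The main obstacle I anticipate is the bookkeeping in the bounded-subsequence case: one must argue carefully that boundedness of all shell-maxima for a \emph{single} $k$ together with global unboundedness of $\varphi$ cannot coexist, so that the boundedness hypothesis truly routes through the non-vanishing needed for \eqref{eq:Prop3.6cond}; handling this cleanly is where the reversal of Proposition~\ref{prop:bounded} requires genuine care rather than a verbatim dualization.
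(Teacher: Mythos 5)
Your argument has a genuine gap at its foundation: the claim that a generalized eigenfunction of a bounded Schr\"odinger operator automatically satisfies $q_{m,k}<C_0\,s^m$ for some $s>1$ is false on graphs with branching. The equation \eqref{eq:H} at a vertex $v$ constrains only the \emph{sum} of the values of $\varphi$ over the neighbours of $v$, not the individual values. On the $3$-regular tree, for instance, one can solve $H\varphi=\lambda\varphi$ outward from the root, at each step assigning two forward neighbours the values $M_r$ and $-M_r+c$ (where $c$ is the sum forced by the equation) with $M_r=2^{r^2}$; this produces a generalized eigenfunction of super-exponential growth. The transfer-matrix bound you invoke is a strictly one-dimensional fact (valid on $\mathbb{Z}$, $\mathbb{N}$, Jacobi matrices), where each vertex has a unique forward neighbour. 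The error is not cosmetic: the exponential upper bound required as a hypothesis of Lemma~\ref{lem:unbounded} is exactly what the \emph{negation of alternative (b)} supplies, and the paper's proof is accordingly a contraposition: assuming (b) fails, for every $\gamma>1$ there is $C_{\gamma,k}$ with $|\varphi(u)|\leq C_{\gamma,k}\gamma^{|u|}$, and choosing $\gamma=s^{1/k}$ gives $q_{m,k}<C_k s^m$ with $s$ uniform in $k$ (this uniformity also matters, since Lemma~\ref{lem-q-Rlimit} requires a single $s$ valid for all $k$, whereas any bound extracted from propagation estimates would have $s$ depending on $k$). Because you never assume that (b) fails, your proof has no mechanism for producing case (b) at all: your closing sentence appeals to ``the alternative in Lemma~\ref{lem:unbounded}'', but that lemma has no alternative --- under its hypotheses the good subsequence always exists --- so as written you would be proving that (a) always holds, which is exactly what your false first step would force.

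Two further remarks. First, the ``bounded shell maxima'' branch of your case division is vacuous: since all balls are finite and $\varphi$ is unbounded, $\{q_{m,k}\}_m$ is unbounded for every $k$ (as you yourself suspected), so no routing through \eqref{eq:Prop3.6cond} occurs there. Second, your instinct that the obstruction inside Lemma~\ref{lem:unbounded} is ``genuine exponential growth'' can be turned into a correct proof, but only by restructuring the dichotomy: fix $\gamma=2$, let $N_1^{(k)}$ be the record set of $\{q_{m,k}\}_m$ (infinite by unboundedness) and $N_2^{(k)}:=N_1^{(k)}\cap\{m\,:\,q_{m+1,k}\leq 2q_{m,k}\}$. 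If $N_2^{(k)}$ is infinite for every $k$, condition (b) of Lemma~\ref{lem-q-Rlimit} holds with $s=2$, giving (a). If $N_2^{(k)}$ is finite for some $k$, the induction inside the proof of Lemma~\ref{lem:unbounded} --- which uses no upper bound --- gives $q_{m,k}>C_1 2^{m}$ for all large $m$; since the maximizing vertices satisfy $|u_{m,k}|<mk$ and tend to infinity, (b) follows with $\gamma_0:=2^{1/(2k)}>1$. Either that restructured dichotomy or the paper's contraposition is the missing idea; an a priori exponential bound cannot substitute for it, because no such bound exists.
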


\begin{proof}
Suppose (b) is not satisfied. We will prove that then (a) holds. Let $k\in\mathbb{N}$. Define 
$$
q_{m,k}
	:=\max_{m\left(k-1\right)\leq\left|u\right|<mk}\left|\varphi\left(u\right)\right|\,,
	\qquad m\in\mathbb{N}.
$$
Since (b) does not hold, for all $\gamma>1$, there is a constant $C_{\gamma,k}>0$ such that
$$
|\varphi(u)|\leq C_{\gamma,k} \gamma^{|u|}\,,\qquad u\in V(G).
$$
Let $s>1$ and set $\gamma:=s^{\frac{1}{k}}>1$. Thus, there is a constant $C_k=C(k,r)>0$ such that
\[
q_{m,k}
	< C_{k}\, \gamma^{mk}
	= C_{k}\, s^m.
\]
Since $\varphi$ is unbounded, $\lim_{m\to\infty}q_{m,k}=\infty$. Then Lemma~\ref{lem:unbounded} implies that there is a subsequence $\left\{q_{m_i,k}\right\}_{i\in\mathbb{N}}$ such that for every $\gamma'>s$,
\[
q_{m_i,k}>q_{m_i-1,k} \quad\text{ and }\quad 
	\gamma' q_{m_i,k}\geq q_{m_i+1,k}.
\]
Thus, Lemma~\ref{lem-q-Rlimit} applied to $\varphi_j:=\varphi$, $\lambda_j:=\lambda$ and $s=\gamma'$ leads to (a).
\end{proof}

\subsection{Generalized eigenfunctions and Shnol type theorems}
\label{ssect:Shnol}
As discussed in the introduction, Shnol type theorems connect a growth conditions for generalized eigenfunctions with eigenvalue $\lambda$ to the fact that $\lambda$ belongs to the spectrum of the operator. 

\medskip

We say that a function $\varphi:V(G)\to\mathbb{C}$ on a graph $G$ has {\em sub-exponential growth (with respect to the graph metric $dist$)} if for one (any) vertex $v_0\in V(G)$, the map
$$
V(G)\ni v\mapsto e^{-\alpha\ \text{dist}(v,v_0)}\ \varphi(v)
$$ 
is an element of $\ell^2(G)$ for all $\alpha>0$. 
With this at hand, the following Shnol-type theorem holds, proven in \cite{HK2011} in more general setting of bounded Jacobi operators on a graph. This theorem is used for the proof of Theorem~\ref{thm:fullresultsubexp}.

\begin{prop}[{\cite[Theorem~4.8]{HK2011}}]\label{prop:HK4.8}
Let $H$ be a Schr\"odinger operator of the form \eqref{eq:H} on an infinite, connected, rooted $d$-bounded graph $(G,v_0)$. Suppose $\varphi:V(G)\to\mathbb{C}$ is a generalized eigenfunction of $H$ with eigenvalue $\lambda$. 
If $\varphi$ is sub-exponentially bounded, then $\lambda \in \sigma(H)$.
\end{prop}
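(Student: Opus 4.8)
The plan is to prove $\lambda \in \sigma(H)$ by exhibiting a Weyl sequence, i.e.\ a sequence $\psi_n \in \ell^2(G)$ with $\|\psi_n\| = 1$ and $\|(H-\lambda)\psi_n\| \to 0$; since $H$ is bounded and self-adjoint, this forces $\lambda \in \sigma(H)$. The functions $\psi_n$ will be normalized radial cutoffs of the generalized eigenfunction $\varphi$.

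First I would record the commutator identity. For a cutoff $\chi : V(G) \to [0,1]$, writing $(H-\lambda)\varphi = 0$ pointwise and using that $H$ is of the form \eqref{eq:H}, a direct computation gives
$$
\big((H-\lambda)(\chi\varphi)\big)(v) = \sum_{u\sim v}\big(\chi(u)-\chi(v)\big)\varphi(u),
$$
so $(H-\lambda)(\chi\varphi)$ is supported on the region where $\chi$ is non-constant along edges. I would then take $\chi = \chi_{r,L}$ depending only on $|v|$, equal to $1$ on $B_r(v_0)$, decreasing linearly to $0$ across the annulus $r \le |v| \le r+L$, and vanishing beyond. Since adjacent vertices satisfy $\big||u|-|v|\big|\le 1$, the Lipschitz bound $|\chi(u)-\chi(v)|\le 1/L$ holds, and Cauchy--Schwarz together with the degree bound $\deg \le d$ yields
$$
\big\|(H-\lambda)(\chi_{r,L}\varphi)\big\|^2 \le \frac{d^2}{L^2}\big(M(r+L+2)-M(r-2)\big), \qquad M(\rho) := \sum_{|v|\le \rho}|\varphi(v)|^2,
$$
while $\|\chi_{r,L}\varphi\|^2 \ge M(r)$ because $\chi_{r,L}\equiv 1$ on $B_r(v_0)$.

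The decisive step is to show that for each window width $w$ there exist arbitrarily large radii $r$ with $M(r+w) \le 2M(r)$; applying this with $w=L+2$, the quotient of the two displayed quantities is at most $2d^2/L^2$, and letting $L=n\to\infty$ produces the Weyl sequence. This is exactly where the sub-exponential growth hypothesis enters. I would argue by contradiction: if for some fixed $w$ one had $M(\rho+w) > 2M(\rho)$ for all $\rho \ge \rho_0$ (with $M(\rho_0) > 0$, available since $\varphi \not\equiv 0$), then $M(\rho_0+kw) > 2^k M(\rho_0)$, so the masses on successive annuli of width $w$ grow like $2^k$; feeding this into $\sum_v e^{-2\alpha|v|}|\varphi(v)|^2$ and choosing $\alpha < (\log 2)/(2w)$ makes the weighted sum diverge, contradicting the assumption that $v\mapsto e^{-\alpha|v|}\varphi(v)$ lies in $\ell^2(G)$ for all $\alpha>0$. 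Hence such radii exist and may be taken arbitrarily large.

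I expect this last counting step to be the main obstacle: everything else is a routine cutoff--commutator estimate, but one must convert the global sub-exponential $\ell^2$-weight condition into the local ``non-doubling of $M$ over a fixed window'' statement, and track the $\pm 2$ shifts between the annulus supporting $(H-\lambda)(\chi\varphi)$ and the ball on which the mass is bounded below. Setting $\psi_n := \chi_{r_n,n}\varphi / \|\chi_{r_n,n}\varphi\|$ along the radii just produced then completes the construction and gives $\lambda \in \sigma(H)$.
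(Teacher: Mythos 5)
Your proof is correct. Note, however, that the paper does not prove this proposition at all: it is quoted from \cite[Theorem~4.8]{HK2011}, where it is established in the more general setting of (weighted) Jacobi operators and Dirichlet forms on graphs. So there is no in-paper argument to compare against, and what you have produced is a self-contained, elementary substitute for the cited result. Your route is the classical Shnol/Weyl-sequence strategy: the commutator identity $\big((H-\lambda)(\chi\varphi)\big)(v)=\sum_{u\sim v}\big(\chi(u)-\chi(v)\big)\varphi(u)$ is exact for operators of the form \eqref{eq:H} (the potential and diagonal terms cancel against $\chi\cdot(H-\lambda)\varphi=0$), the radial Lipschitz cutoff gives the $d^2/L^2$ factor after Cauchy--Schwarz and a multiplicity count, and the conversion of the sub-exponential $\ell^2$-weight hypothesis into ``arbitrarily large non-doubling radii'' is sound. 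The one place you compress (``the masses on successive annuli grow like $2^k$'') deserves its single line of justification: under the contradiction hypothesis $M(\rho+w)>2M(\rho)$ for all $\rho\geq\rho_0$ with $M(\rho_0)>0$, the annulus masses satisfy $m_k:=M(\rho_0+kw)-M(\rho_0+(k-1)w)>M(\rho_0+(k-1)w)>2^{k-1}M(\rho_0)$, whence $\sum_v e^{-2\alpha|v|}|\varphi(v)|^2\geq \tfrac{1}{2}e^{-2\alpha\rho_0}M(\rho_0)\sum_{k\geq 1}\big(2e^{-2\alpha w}\big)^k=\infty$ as soon as $\alpha<\log 2/(2w)$, contradicting sub-exponential boundedness. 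With that line added, the rest of the bookkeeping --- taking $w=L+2$, so that $M(r+L+2)\leq 2M(r)$ at the chosen radii bounds the Rayleigh-type quotient by $2d^2/L^2$, then normalizing and invoking the Weyl criterion for the bounded self-adjoint operator $H$ --- is all in order. Your argument is arguably more transparent in this discrete setting than the Dirichlet-form machinery of \cite{HK2011}, at the price of not covering the generality (arbitrary weights, unbounded geometry) for which that reference is designed.
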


The proof of Theorem~\ref{thm:sigmainfty} depends on the existence of a generalized eigenfunction for each point in the spectrum admitting a suitable growth rate. The following statement provides a sufficient condition to get such generalized eigenfunction which can be found in \cite[Theorem~3]{LT16} in a more general setting.


\begin{theorem}[reverse Shnol's Theorem, {\cite[Theorem 3]{LT16}}]
\label{thm:revSch} 
Let $(G,v_0)$ be a connected, infinite, rooted graph and $H$ be a Schr\"odinger operator on $\ell^{2}(G)$ of the form \eqref{eq:H} with spectral measure $\mu$. Suppose $\omega\in\ell^2(G)$ is real-valued and positive $($i.e.\ $\omega(v)>0$ for all $v\in G)$. Then for $\mu$-a.e.\ $\lambda\in\sigma\left(H\right)$, there exists a generalized eigenfunction $\varphi$ of $H$ with eigenvalue $\lambda$ satisfying $\varphi\cdot\omega\in\ell^2(G)$.
\end{theorem}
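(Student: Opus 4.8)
The plan is to realize the weight as a Hilbert--Schmidt multiplication operator and then to disintegrate the spectral resolution of $H$ by an operator-valued Radon--Nikodym theorem. Write $\Omega$ for the multiplication operator $(\Omega f)(v)=\omega(v)f(v)$ on $\ell^2(G)$. Since $\omega$ is strictly positive and lies in $\ell^2(G)$, the operator $\Omega$ is bounded, injective, positive, self-adjoint, has dense range, and is Hilbert--Schmidt with $\|\Omega\|_{\mathrm{HS}}=\|\omega\|_{\ell^2}$, because $\{\delta_v\}_{v\in V(G)}$ is an orthonormal basis and $\sum_v\|\Omega\delta_v\|^2=\sum_v\omega(v)^2<\infty$. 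Let $E$ denote the projection-valued spectral measure of $H$ and set
\[
\nu(B):=\mathrm{tr}\bigl(\Omega E(B)\Omega\bigr)=\sum_{v\in V(G)}\omega(v)^2\,\mu_{\delta_v}(B),
\]
a finite Borel measure with $\nu(\mathbb{R})=\|\omega\|_{\ell^2}^2$, where $\mu_{\delta_v}$ is the spectral measure of $\delta_v$. Since every weight $\omega(v)^2$ is strictly positive and $\{\delta_v\}$ is total, $\nu$ is of maximal spectral type and hence equivalent to $\mu$; it therefore suffices to construct the desired generalized eigenfunction for $\nu$-a.e.\ $\lambda$.

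The key step is the disintegration. The map $B\mapsto\Omega E(B)\Omega$ is a $\sigma$-additive measure taking values in the positive trace-class operators, whose total variation in trace norm equals $\nu$. I would invoke the Radon--Nikodym theorem for trace-class-valued measures to obtain a $\nu$-measurable family $\{A_\lambda\}$ of positive trace-class operators with
\[
\Omega E(B)\Omega=\int_B A_\lambda\,d\nu(\lambda),\qquad B\subseteq\mathbb{R}\ \text{Borel},
\]
and $\mathrm{tr}(A_\lambda)=1$ for $\nu$-a.e.\ $\lambda$; in particular $A_\lambda\neq 0$ a.e. This existence-and-measurability statement is the technical heart of the argument and the step I expect to be the main obstacle, since it requires care with measurability in the trace norm.

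With the density in hand I would read off the eigenvalue equation. For fixed $u,v\in V(G)$ the defining identity yields $\langle\delta_v,\Omega E(B)\Omega\delta_u\rangle=\int_B(A_\lambda\delta_u)(v)\,d\nu(\lambda)$. Inserting $H$ via $HE(B)=\int_B\lambda\,dE$ gives, on one hand, $\langle\delta_v,\Omega HE(B)\Omega\delta_u\rangle=\int_B\lambda\,(A_\lambda\delta_u)(v)\,d\nu(\lambda)$; on the other hand, using that $H$ is self-adjoint with nearest-neighbour (local) matrix elements $h_{vw}=\langle\delta_v,H\delta_w\rangle$, a direct computation rewrites the same quantity as $\int_B\omega(v)\sum_w\tfrac{h_{vw}}{\omega(w)}(A_\lambda\delta_u)(w)\,d\nu(\lambda)$. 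As $B$ is arbitrary and $V(G)$ is \emph{countable}, there is a single $\nu$-null set off which, for all $u,v\in V(G)$ simultaneously,
\[
\omega(v)\sum_{w}\frac{h_{vw}}{\omega(w)}\,(A_\lambda\delta_u)(w)=\lambda\,(A_\lambda\delta_u)(v).
\]
Setting $\varphi^{(u)}_\lambda:=\omega^{-1}\cdot(A_\lambda\delta_u)$ and dividing by $\omega(v)>0$ turns this into $(H\varphi^{(u)}_\lambda)(v)=\lambda\,\varphi^{(u)}_\lambda(v)$ for every $v$, and moreover $\omega\cdot\varphi^{(u)}_\lambda=A_\lambda\delta_u\in\ell^2(G)$.

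Finally, on the full-measure set of $\lambda$ where all these equations hold and $A_\lambda\neq 0$, I would pick any vertex $u$ with $A_\lambda\delta_u\neq 0$ (which exists since $A_\lambda\neq 0$) and put $\varphi:=\varphi^{(u)}_\lambda$. Then $\varphi$ is a nonzero generalized eigenfunction of $H$ with eigenvalue $\lambda$ and $\varphi\cdot\omega\in\ell^2(G)$, as required. No measurable selection of $u$ in $\lambda$ is needed, because only existence for a.e.\ $\lambda$ is asserted. Beyond the Radon--Nikodym step, the only remaining care is the bookkeeping that collapses the countably many a.e.\ identities into a single null set, which is possible precisely because $G$ is countable.
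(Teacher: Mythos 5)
The paper offers no proof of this theorem --- it is imported verbatim from \cite[Theorem 3]{LT16} --- so the only meaningful comparison is with the argument behind that citation. Your proof is correct, and it is in fact a faithful reconstruction of the standard generalized-eigenfunction-expansion machinery (Berezanskii, Poerschke--Stolz--Weidmann) on which \cite{LT16} rests: rigging by the Hilbert--Schmidt multiplication operator $\Omega$, disintegration of $B\mapsto \Omega E(B)\Omega$ with respect to $\nu=\operatorname{tr}\bigl(\Omega E(\cdot)\Omega\bigr)$, and recovery of the pointwise eigenvalue equation from the locality of $H$. The step you single out as the main obstacle is legitimate and citable: the measure $B\mapsto\Omega E(B)\Omega$ is $\sigma$-additive in trace norm with total variation exactly $\nu$ (its values are positive operators, so their trace norm equals their trace), and the trace class over the separable space $\ell^2(G)$ is a separable dual Banach space, hence has the Radon--Nikodym property by the Dunford--Pettis theorem; applying the bounded functional $\operatorname{tr}$ under the resulting Bochner integral gives $\operatorname{tr}(A_\lambda)=1$ for $\nu$-a.e.\ $\lambda$, which is all you need for nonvanishing. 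Two small bookkeeping points, neither of which is a gap: only $\mu\ll\nu$ (not equivalence) is required to transfer the $\nu$-a.e.\ statement to a $\mu$-a.e.\ statement, and this holds because $\nu$ vanishes on a Borel set exactly when $E$ does (strict positivity of all $\omega(v)^2$ plus totality of $\{\delta_v\}$); and the interchange of $\sum_w$ with $\int_B$ in your matrix-element computation is justified because, at each fixed $v$, the sum has at most $d+1$ nonzero terms by locality of $H$ and the uniform degree bound, while boundedness of $H$ keeps all spectral integrals over a compact set.
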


\begin{corollary}
\label{cor:revSch}
Let $(G,v_0)$ be a connected, infinite, rooted graph and $H$ be a Schr\"odinger operator on $\ell^{2}(G)$ of the form \eqref{eq:H}. If $\omega\in\ell^2(G)$ is real-valued and positive, then
\[
\overline{\left\{\lambda \in \sigma(H)\,\big|\,\exists\varphi:G\to\mathbb{C},\,H\varphi=\lambda\varphi \text{ and } \varphi\omega\in\ell^2(G)\right\}}=\sigma(H).
\]
\end{corollary}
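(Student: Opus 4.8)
The plan is to write the left-hand set (before taking the closure) as
$$
S:=\left\{\lambda \in \sigma(H)\,\big|\,\exists\varphi:G\to\mathbb{C},\,H\varphi=\lambda\varphi \text{ and } \varphi\omega\in\ell^2(G)\right\},
$$
and to establish the two inclusions separately. One of them is purely formal: since $S\subseteq\sigma(H)$ and the spectrum of the bounded self-adjoint operator $H$ is closed, we immediately get $\overline{S}\subseteq\sigma(H)$. All the content lies in the reverse inclusion $\sigma(H)\subseteq\overline{S}$, and for this I would feed Theorem~\ref{thm:revSch} into a density argument driven by the support of the spectral measure.

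First I would fix the spectral measure $\mu$ of $H$ as a measure of maximal spectral type, so that $\mathrm{supp}(\mu)=\sigma(H)$ (equivalently, $E(\Omega)=0$ iff $\mu(\Omega)=0$ for the projection-valued measure $E$ of $H$). Theorem~\ref{thm:revSch} asserts that for $\mu$-almost every $\lambda\in\sigma(H)$ there is a generalized eigenfunction $\varphi$ of $H$ with eigenvalue $\lambda$ satisfying $\varphi\cdot\omega\in\ell^2(G)$; that is precisely the statement that $\lambda\in S$. Consequently the exceptional set is $\mu$-null, namely $\mu\big(\sigma(H)\setminus S\big)=0$, so $S$ has full $\mu$-measure in $\sigma(H)$.

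Next I would deduce density. Fix $\lambda_0\in\sigma(H)=\mathrm{supp}(\mu)$ and $\varepsilon>0$. By definition of the support, $\mu\big((\lambda_0-\varepsilon,\lambda_0+\varepsilon)\big)>0$. Removing the $\mu$-null set $\sigma(H)\setminus S$ cannot change this, so $\mu\big(S\cap(\lambda_0-\varepsilon,\lambda_0+\varepsilon)\big)>0$; in particular $S\cap(\lambda_0-\varepsilon,\lambda_0+\varepsilon)\neq\emptyset$. Thus there is $\lambda\in S$ with $|\lambda-\lambda_0|<\varepsilon$, and since $\varepsilon>0$ was arbitrary we conclude $\lambda_0\in\overline{S}$. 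As $\lambda_0\in\sigma(H)$ was arbitrary, this gives $\sigma(H)\subseteq\overline{S}$, and combining with the trivial inclusion yields $\overline{S}=\sigma(H)$.

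The only non-routine point, and the place I would be careful, is the identity $\mathrm{supp}(\mu)=\sigma(H)$: this is exactly what it means for $\mu$ to be the spectral measure of $H$ (of maximal spectral type), so one must make sure the $\mu$ appearing in Theorem~\ref{thm:revSch} is of this type rather than, say, the spectral measure of a single non-cyclic vector whose support could be a proper closed subset of $\sigma(H)$. Once that is in place, the remainder is an elementary measure-theoretic density argument, so I do not expect any substantial obstacle.
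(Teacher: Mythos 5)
Your proposal is correct and is precisely the argument the paper leaves implicit: Corollary~\ref{cor:revSch} is stated as an immediate consequence of Theorem~\ref{thm:revSch}, the point being exactly that a set of full $\mu$-measure is dense in $\mathrm{supp}(\mu)=\sigma(H)$, while the reverse inclusion is trivial since $\sigma(H)$ is closed. Your caveat about $\mu$ being of maximal spectral type (so that its support is all of $\sigma(H)$) is the right thing to check and is indeed how the spectral measure in Theorem~\ref{thm:revSch} (from \cite{LT16}) is to be understood.
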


\begin{lemma} \label{lem-l2}
Let $(G,v_0)$ be an infinite, connected, rooted graph. Then $\omega_G:V(G)\to\mathbb{R}$ defined by
\[
\omega_G(v)
	:=\frac{1}{(|v|+1)\, \sqrt{\sharp \mathcal{S}_{|v|}(v_0)}}
\]
satisfies $\omega_G\in\ell^2(G)$.
\end{lemma}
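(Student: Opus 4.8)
The plan is to compute the $\ell^2(G)$-norm of $\omega_G$ directly by organizing the sum over vertices according to their distance from the root. Since $\omega_G(v)$ depends only on $|v|$, the cardinality factor $\sharp\mathcal{S}_{|v|}(v_0)$ in the denominator will precisely cancel the number of vertices in each sphere, reducing the computation to a convergent series in the radius alone.

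First I would record that $\omega_G$ is well defined: because $G$ is $d$-bounded, every ball $B_r(v_0)$ is finite and hence so is every sphere $\mathcal{S}_r(v_0)$, and moreover $v\in\mathcal{S}_{|v|}(v_0)$ forces $\sharp\mathcal{S}_{|v|}(v_0)\geq 1$, so no denominator vanishes. Then I would decompose the sum over all vertices into a sum over radii followed by a sum over the corresponding sphere,
\[
\sum_{v\in V(G)}\omega_G(v)^2
  =\sum_{r=0}^{\infty}\ \sum_{v\in\mathcal{S}_r(v_0)}\frac{1}{(r+1)^2\,\sharp\mathcal{S}_r(v_0)}.
\]
On a fixed sphere $\mathcal{S}_r(v_0)$ every summand equals $\bigl((r+1)^2\,\sharp\mathcal{S}_r(v_0)\bigr)^{-1}$, and there are exactly $\sharp\mathcal{S}_r(v_0)$ of them, so the inner sum collapses to $(r+1)^{-2}$ whenever $\mathcal{S}_r(v_0)\neq\emptyset$ (and contributes nothing otherwise). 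Substituting $n=r+1$ then gives
\[
\sum_{v\in V(G)}\omega_G(v)^2
  =\sum_{r=0}^{\infty}\frac{1}{(r+1)^2}
  =\sum_{n=1}^{\infty}\frac{1}{n^2}
  =\frac{\pi^2}{6}<\infty,
\]
which yields $\omega_G\in\ell^2(G)$.

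There is essentially no obstacle here: the argument is a resummation. The only point worth emphasizing is that the definition of $\omega_G$ is engineered so that the sphere cardinalities cancel, leaving the convergent $p$-series $\sum_r (r+1)^{-2}$, and that the extra factor $(|v|+1)^{-1}$ is exactly what secures square-summability after this cancellation. (Connectedness and infiniteness of $G$ ensure $\mathcal{S}_r(v_0)\neq\emptyset$ for every $r$, but this is not even needed, since empty spheres contribute zero.)
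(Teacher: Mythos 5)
Your proof is correct and is essentially identical to the paper's: both decompose the $\ell^2$-norm as a sum over spheres, cancel the factor $\sharp\mathcal{S}_r(v_0)$, and conclude via the convergent series $\sum_{r\geq 0}(r+1)^{-2}$. The additional remarks on well-definedness and empty spheres are fine but not needed in the paper's setting, where all graphs are $d$-bounded.
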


\begin{proof}
A short computation gives
\[
\Vert\omega_G\Vert_2^2=\sum_{v\in G}|\omega_G(v)|^2=\sum_{k=0}^\infty \sharp \mathcal{S}_k(v_0)\cdot\frac{1}{(k+1)^2 \sharp \mathcal{S}_k(v_0)}=\sum_{k=0}^\infty \frac{1}{(k+1)^2}<\infty.
\]
\end{proof}

\smallskip{}

As a consequence of Lemma~\ref{lem-l2} and Theorem~\ref{thm:revSch} we derive

\begin{corollary}
\label{cor:decayGenEigenfct}
The set 
$$
\Lambda_1=
\left\{
	\lambda\in\sigma(H) \,\left|\, 
	\begin{array}{c}
		\text{there is a generalized eigenfunction } \varphi_\lambda 
		\text{ with eigenvalue } \lambda\\[0.1cm]
		\text{ such that } 
		\left|\varphi_\lambda\left(v\right)\right| 
			\leq \left(|v|+1\right)\, \sqrt{\sharp \mathcal{S}_{|v|}(v_0)}		
	\end{array}
	\right.
\right\}
$$
is dense in $\sigma(H)$. In particular, if $G$ has sub-exponential growth, then the set
\begin{equation}\label{eq:Lambda2}
\Lambda_2=
\left\{
	\lambda\in\sigma(H) \,\left|\, 
	\begin{array}{c}
		\text{there is a generalized eigenfunction } \varphi_\lambda 
		\text{ with eigenvalue } \lambda\\[0.1cm]
		\text{ such that }
		\varphi_\lambda \text{ is sub-exponentially growing}
	\end{array}
	\right.
\right\}
\end{equation}
is dense in $\sigma(H)$.
\end{corollary}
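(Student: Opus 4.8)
The plan is to combine the reverse Shnol theorem (Theorem~\ref{thm:revSch}, in the density form of Corollary~\ref{cor:revSch}) with the specific weight $\omega_G$ produced in Lemma~\ref{lem-l2}, and then to trade the resulting $\ell^2$-summability for a pointwise envelope. First I would apply Corollary~\ref{cor:revSch} with $\omega:=\omega_G$, which is admissible since $\omega_G$ is real-valued, strictly positive, and lies in $\ell^2(G)$ by Lemma~\ref{lem-l2}. This produces a dense subset $D\subseteq\sigma(H)$ such that for every $\lambda\in D$ there is a generalized eigenfunction $\varphi$ with $\varphi\,\omega_G\in\ell^2(G)$.

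The key elementary observation is that $\ell^2(G)\subseteq\ell^\infty(G)$, so $\varphi\,\omega_G\in\ell^2(G)$ forces $M:=\|\varphi\,\omega_G\|_\infty\leq\|\varphi\,\omega_G\|_2<\infty$, with $M>0$ because $\varphi\not\equiv0$. Recalling $\omega_G(v)=\big((|v|+1)\sqrt{\sharp\mathcal{S}_{|v|}(v_0)}\big)^{-1}$, the inequality $|\varphi(v)|\,\omega_G(v)\leq M$ rearranges to $|\varphi(v)|\leq M\,(|v|+1)\sqrt{\sharp\mathcal{S}_{|v|}(v_0)}$ for all $v$. Since a generalized eigenfunction may be freely rescaled, I would replace $\varphi$ by $\varphi/M$ to obtain an eigenfunction with the same eigenvalue satisfying exactly the bound defining $\Lambda_1$. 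Hence $D\subseteq\Lambda_1$, and density of $D$ yields density of $\Lambda_1$ in $\sigma(H)$.

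For the second assertion I would show $\Lambda_1\subseteq\Lambda_2$ whenever $G$ has sub-exponential growth, which upgrades density of $\Lambda_1$ to density of $\Lambda_2$. Fixing $\lambda\in\Lambda_1$ with eigenfunction $\varphi_\lambda$ and an arbitrary $\alpha>0$, I must check $e^{-\alpha\,\text{dist}(\cdot,v_0)}\varphi_\lambda\in\ell^2(G)$. Summing the pointwise bound over spheres gives
\[
\sum_{v\in G}e^{-2\alpha|v|}|\varphi_\lambda(v)|^2
\leq\sum_{k=0}^\infty\big(\sharp\mathcal{S}_k(v_0)\big)^2(k+1)^2 e^{-2\alpha k},
\]
and by Definition~\ref{def:subexp} one has $\sharp\mathcal{S}_k(v_0)\leq\sharp B_k(v_0)<C_\gamma\gamma^k$ for every $\gamma>1$. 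Choosing $\gamma\in(1,e^{\alpha})$ makes $\gamma^2 e^{-2\alpha}<1$, so the series $\sum_k(k+1)^2(\gamma^2 e^{-2\alpha})^k$ converges. As $\alpha>0$ was arbitrary, $\varphi_\lambda$ is sub-exponentially growing and $\lambda\in\Lambda_2$.

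I expect no deep obstacle: the statement is essentially bookkeeping once Theorem~\ref{thm:revSch} and Lemma~\ref{lem-l2} are available. The two points requiring care are (i) converting the global summability $\varphi\omega_G\in\ell^2$ into the stated pointwise envelope, which rests on the trivial inclusion $\ell^2\subseteq\ell^\infty$ together with rescaling the eigenfunction; and (ii) in the second part, matching the decay rate $\alpha$ against the growth parameter $\gamma$ so that the geometric series converges. This last step is precisely where \emph{sub-exponential} growth of $G$ is used, since one must be free to take $\gamma$ arbitrarily close to $1$; merely polynomial growth would be more than enough, whereas exponential growth would obstruct the argument.
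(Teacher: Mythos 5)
Your proposal is correct and follows essentially the same route as the paper: the first part is exactly the combination of Corollary~\ref{cor:revSch} with the weight $\omega_G$ from Lemma~\ref{lem-l2}, and the second part uses the same bound $\sharp\mathcal{S}_{|v|}(v_0)\leq\sharp B_{|v|}(v_0)<C\gamma^{|v|}$ to upgrade the $\Lambda_1$ envelope to sub-exponential growth. The only difference is that you spell out the steps the paper leaves implicit, namely the passage from $\varphi\,\omega_G\in\ell^2(G)$ to the pointwise bound via $\|\cdot\|_\infty\leq\|\cdot\|_2$ and rescaling, and the sphere-by-sphere summation with $\gamma<e^{\alpha}$.
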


\begin{proof}
This follows immediately from Lemma~\ref{lem-l2} and Corollary~\ref{cor:revSch}. The second part follows from the fact that if $G$ is sub-exponentially growing, then for each $\gamma>1$, there is a $C>0$ such that
$$
\left(|v|+1\right)\, \sqrt{\sharp \mathcal{S}_{|v|}(v_0)}	
	\leq C\cdot\gamma^{\left|v\right|/2}\left(|v|+1\right).
$$
Then the statement follows from the first part.
\end{proof}


\subsection{Proof of Theorem~\ref{thm:sigmainfty}}

The strategy of the proof of Theorem~\ref{thm:sigmainfty} is as follows. Given $\lambda\in\sigma_{\text ess}(H)$ we use the reverse Shnol's property (Theorem~\ref{thm:revSch}) to obtain a sequence of generalized eigenfunctions $\left\{\varphi^{(n)}\right\}_{n=1}^\infty$ of $H$ with eigenvalue $\lambda_n$ such that $\lim_{n\to\infty}\lambda_n=\lambda$ and each $\varphi_n$ is sub-exponentially growing. With this at hand, Proposition~\ref{prop:Rlimseq}, Propositions~\ref{prop:bounded} and Proposition~\ref{prop:unbounded} complete the proof of the theorem.

\medskip

Recall that the discrete spectrum $\sigma_{disc}(H)$ of an operator $H$ is defined by the set of isolated eigenvalues of finite multiplicity. Furthermore, the essential spectrum $\sigma_{ess}(H)$ is defined by $\sigma(H)\setminus\sigma_{disc}(H)$. Thus, if $\lambda$ is an element of the essential spectrum, then either $\lambda$ is an eigenvalue of infinite multiplicity or in each neighbourhood of $\lambda$ there are elements of the spectrum $\sigma(H)$ that are not equal to $\lambda$.

\medskip

\begin{proof}[Proof of Theorem~\ref{thm:sigmainfty}]
Let $\lambda\in\sigma_{\text ess}(H)$. Then one of the following cases holds
\begin{itemize}
\item[(a)] For every $\varepsilon>0$ there is a $\lambda_\varepsilon\in\sigma(H)$ such that $0<|\lambda-\lambda_\varepsilon|<\varepsilon$.
\item[(b)] There exists an infinite sequence $\varphi_n\in\ell^2(G)$ satisfying $H\varphi_n=\lambda\varphi_n$ and they are pairwise orthogonal.
\end{itemize}

Begin with case (a), due to Corollary~\ref{cor:decayGenEigenfct}  and since $(G,v_0)$ has sub-exponential growth, the set $\Lambda_2$ (of \eqref{eq:Lambda2}) 
is dense in $\sigma(H)$. 
Then by the assumption (a) there is a sequence of eigenvalues $\{\lambda_n\}_{n\in\mathbb{N}}$ corresponding to generalized eigenfunctions $\varphi_n:V(G)\to\mathbb{C}$ such that $\lambda_n\neq\lambda_m$ for $m\neq n$, $\lim_{n\to\infty}\lambda_n=\lambda$ and $\varphi_n$ has sub-exponential growth. Then either infinitely many of the $\varphi_n$ are unbounded or only finitely many of them are unbounded

\medskip

If infinitely many of the $\varphi_n$ are unbounded, there is no loss of generality in assuming that each $\varphi_n$ is unbounded (otherwise pass to a subsequence). 
Then Proposition~\ref{prop:unbounded} asserts that $\lambda_n\in\sigma_\infty(H_n)$ for some $\mathcal{R}$-limit $H_n$ of $H$ or there is a $\gamma>1$ such that for all $C>0$, there exists a $u\in V(G)$ with $|\varphi(u)|\geq C\gamma^{|u|}$.
However, the second assertion cannot hold as $\varphi_n$ is sub-exponentially bounded. Hence, $\lambda_n\in\sigma_\infty(H_n)$ holds for some $H_n\in\mathcal{R}(H)$. Since $\lambda_n\to\lambda$, Corollary~\ref{cor:UnBoundSpec-Clos} yields $\lambda\in\bigcup_{H'\in\mathcal{R}(H)}\sigma_\infty(H')$ finishing the proof.

\medskip

If only finitely many of the $\varphi_n$'s are unbounded, there is no loss of generality in assuming that $\varphi_n$ is bounded for each $n\in\mathbb{N}$ (otherwise pass to a subsequence). By Proposition~\ref{prop:bounded} either infinitely many of the $\lambda_n$ satisfy $\lambda_n\in\sigma_\infty(H_n)$ for some $H_n\in\mathcal{R}(H)$ or only finitely many. In the first case Corollary~\ref{cor:UnBoundSpec-Clos} finishes the proof as before. In the second case, Proposition~\ref{prop:bounded} asserts that for $n$ large enough
$$
|\varphi_n\left(u\right)|\leq C_n\cdot \gamma_n^{-\left|u\right|}
	\,,\qquad u\in G\,.
$$
Thus, $\varphi_n\in\ell^2(G)$ and in particular, they are orthogonal as $H$ is self-adjoint and $\lambda_n\neq\lambda_m$ for $m\neq n$. This is the same situation as in (b) (where $\lambda_n\equiv\lambda$ for all $n\in\mathbb{N}$). We proceed proving the claim of the theorem in this last case which will also prove it for the case (b).

\medskip

Set $\psi_n:= \frac{\varphi_n}{\|\varphi_n\|_2}$ for each $n\in\mathbb{N}$. Then $\psi_n$ is still an eigenfunction of $H$ with eigenvalue $\lambda_n$ ($\lambda_n=\lambda$ in (b)) and the functions $\{\psi_n\}_{n\in\mathbb{N}}$ are pairwise orthogonal. Let $u_n\in V(G)$ be the vertex such that $\|\psi_n\|_\infty=|\psi_n(u_n)|$.  We have two cases, either $\{u_n\}_{n\in\mathbb{N}}$ (monotonically, by passing to a subsequence) converges to infinity or $\{u_n\}_{n\in\mathbb{N}}\subseteq B_r(v)$ for some $r>0$, $v\in V(G)$ and every $n\in\mathbb{N}$. In the first case, when $\{u_n\}_{n\in\mathbb{N}}$ converges to infinity, then
$$
\max_{u\in B_n(u_n)} \big| \psi_n(u) \big| \leq \|\psi_n\|_\infty = |\psi_n(u_n)|
$$
holds by construction. Thus, Proposition~\ref{prop:Rlimseq} yields that there exists an $\mathcal{R}$-limit $(H',G',v'_0)$ being a limit point of $(H,G,u_n)$ and a generalized eigenfunction $0\neq \psi\in\ell^\infty(G')$ of $H'$ with eigenvalue $\lambda$. 

\medskip

It is left to treat the case that $\{u_n\}_{n\in\mathbb{N}}\subseteq B_r(v)$ for some $r>0$, $v\in V(G)$ and every $n\in\mathbb{N}$. 
Since $\{\psi_n\}_{n\in\mathbb{N}}$ are pairwise orthogonal, they converge weakly to zero, see e.g.\ \cite[Theorem II.6]{ReedSimonI}. Since $\|\psi_n\|_\infty = |\psi_n(u_n)|$ and $u_n\in B_r(v)$ for all $n\in\mathbb{N}$, the weak convergence to zero yields $\lim_{n\to\infty}\|\psi_n\|_\infty=0$. For $k\in\mathbb{N}$ define
$$
q_{\ell,k}^{(n)} := \max_{\ell(k-1)\leq |u| < \ell k} |\psi_n(u)|.
$$
Since $\psi_n\in\ell^2(G)$, we derive $\lim_{\ell\to\infty} q_{\ell,k}^{(n)}=0$. Define 
$$
L_{n,k}:=\big\{\ell\in\mathbb{N}\,:\, q_{\ell,k}^{(n)} \geq \max_{i\geq \ell} q_{i,k}^{(n)} \big\}.
$$
Since $\lim_{\ell\to\infty} q_{\ell,k}^{(n)}=0$, we conclude that $L_{n,k}$ is infinite for all $n\in\mathbb{N}$. Next, we will prove the following lemma that together with Lemma~\ref{lem-q-Rlimit} will conclude the proof.

\medskip

\begin{lemma}\label{lem:lastlem}
Suppose we are in the setting as described before. For each $k\in\mathbb{N}$ there are two sequences $\{n_j\}_{j\in\mathbb{N}}$ (with $n_j\to\infty$) and $\{\ell_j\}_{j\in\mathbb{N}}$ (with $\ell_j\to\infty$) such that for each $j\in\mathbb{N}$, $\ell_j\in L_{n_j,k}$ and $q_{\ell_j-1,k}^{(n_j)} \leq 2q_{\ell_j,k}^{(n_j)}$.
\end{lemma}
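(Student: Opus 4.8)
The plan is to argue by contradiction and to arrange the output so that it feeds directly into Lemma~\ref{lem-q-Rlimit} with $s=2$. Indeed, once we have produced $n_j\to\infty$ and $\ell_j\to\infty$ with $\ell_j\in L_{n_j,k}$ and $q_{\ell_j-1,k}^{(n_j)}\le 2q_{\ell_j,k}^{(n_j)}$, the record property $\ell_j\in L_{n_j,k}$ already gives $q_{\ell_j,k}^{(n_j)}\ge q_{\ell_j+1,k}^{(n_j)}$, and the two bounds together are exactly hypothesis~(b) of Lemma~\ref{lem-q-Rlimit} for $s=2$. To build the sequences it suffices to show that for every $N,L\in\mathbb{N}$ there are $n\ge N$ and a record $\ell\ge L$, $\ell\in L_{n,k}$, with $q_{\ell-1,k}^{(n)}\le 2q_{\ell,k}^{(n)}$; a diagonal choice $N=L=j$ then yields $n_j,\ell_j$. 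So I assume the opposite: there are $N_0,L_0\in\mathbb{N}$ with $q_{\ell-1,k}^{(n)}>2q_{\ell,k}^{(n)}$ for \emph{all} $n\ge N_0$ and all $\ell\in L_{n,k}$ with $\ell\ge L_0$.

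The combinatorial heart is to deduce that this assumption forces each $\psi_n$ (for $n\ge N_0$) to decay geometrically away from the root. Fix $n\ge N_0$ and abbreviate $M_\ell:=q_{\ell,k}^{(n)}$, so $\ell\in L_{n,k}$ means $M_\ell=\max_{i\ge\ell}M_i$. The decisive point is that, under the assumed inequality, the records $\ell\ge L_0$ with positive value are \emph{consecutive integers}: if $r\ge L_0+1$ were a record with $M_r>0$ whose predecessor $r-1$ is not a record, then $r$ is the smallest record $\ge r-1$, so $M_{r-1}\le M_r$, which is incompatible with $M_{r-1}>2M_r>0$. There is moreover no resurgence after a zero: if $M_{\ell_0}=0$ for some $\ell_0\ge L_0$ but $M_{\ell_0+1}>0$, then letting $r$ be the smallest record $\ge\ell_0+1$ (so $M_r>0$) and descending through consecutive records would make $\ell_0$ a positive-valued record, a contradiction; hence $M_\ell=0$ for all $\ell\ge\ell_0$. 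Consequently every integer $\ell\ge L_0$ below the first zero is a record, $M$ is non-increasing there, and the assumption $M_{\ell-1}>2M_\ell$ telescopes to
\[
M_\ell\le 2^{-(\ell-L_0)}M_{L_0}\le 2^{-(\ell-L_0)}\,\|\psi_n\|_\infty,\qquad \ell\ge L_0,
\]
using $M_{L_0}\le\|\psi_n\|_\infty$; beyond the first zero the bound is trivial.

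Finally I would convert this into a quantitative contradiction with $\|\psi_n\|_2=1$, exploiting that $\|\psi_n\|_\infty\to0$ (the $\psi_n$ are orthonormal, hence weakly null, and their maxima are attained inside the fixed finite ball $B_r(v)$). Put $R_0:=\max(L_0,k)$ and split $\|\psi_n\|_2^2$ into the fixed finite ball $B_{R_0k}(v_0)$ and the annuli $\{u:\ell(k-1)\le|u|<\ell k\}$ with $\ell\ge R_0$, which overlap for $\ell\ge k-1$ and therefore cover $\{|u|\ge R_0k\}$. The near part is at most $\sharp B_{R_0k}(v_0)\,\|\psi_n\|_\infty^2\to0$, the ball being finite and fixed. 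For the far part, fix $\gamma\in(1,4^{1/k})$ and invoke the sub-exponential growth of $(G,v_0)$, $\sharp B_{\ell k}(v_0)\le C_\gamma\gamma^{\ell k}$, together with the decay bound to obtain
\[
\sum_{\ell\ge R_0}\sharp\{u:\ell(k-1)\le|u|<\ell k\}\,M_\ell^2
	\le 4^{L_0}C_\gamma\,\|\psi_n\|_\infty^2\sum_{\ell\ge R_0}\Big(\tfrac{\gamma^k}{4}\Big)^{\ell},
\]
a convergent geometric series since $\gamma^k<4$. Hence $\|\psi_n\|_2^2\le K\,\|\psi_n\|_\infty^2\to0$ for a constant $K$ independent of $n$, contradicting $\|\psi_n\|_2=1$; this refutes the assumption and proves the lemma.

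I expect the genuine obstacle to be the middle paragraph: showing that the right-to-left record set $L_{n,k}$ consists, on its positive range, of consecutive integers and admits no resurgence after a zero, so that the assumed per-record factor-$2$ drop upgrades to a genuine geometric bound valid at every radius. The concluding estimate is then a routine summation resting on the sub-exponential growth hypothesis of Theorem~\ref{thm:sigmainfty} and on $\|\psi_n\|_\infty\to0$.
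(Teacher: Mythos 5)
Your proposal is correct and follows essentially the same route as the paper: negate the statement to obtain uniform $N_0,L_0$ with a factor-$2$ drop at every record $\ell\ge L_0$ for every $n\ge N_0$, show the record set is then downward closed so the drop telescopes into geometric decay of $q_{\ell,k}^{(n)}$ uniformly in $n$, and combine sub-exponential growth with $\|\psi_n\|_\infty\to 0$ to contradict $\|\psi_n\|_2=1$. The only notable difference is cosmetic: the paper's descent step uses $q_{\ell-1,k}^{(n)}>2q_{\ell,k}^{(n)}\ge\max_{i\ge\ell}q_{i,k}^{(n)}$ to conclude $\ell-1\in L_{n,k}$ with no positivity assumption, which lets it avoid your separate treatment of zero values and ``resurgence.''
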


\medskip

Before proving Lemma~\ref{lem:lastlem}, let us explain why it finishes the proof. By construction $\psi_n$ is a generalized eigenfunction of $H$ with eigenvalue $\lambda_n$ satisfying $0\neq \psi_n\in\ell^\infty(G)$ and $\lim_{n\to\infty}\lambda_n=\lambda$. Lemma~\ref{lem:lastlem} implies that for each $k\in\mathbb{N}$, there are subsequences $n_j\to\infty$ and $\ell_j\to\infty$ satisfying $\ell_j\in L_{n_j,k}$ and $q_{\ell_j-1,k}^{(n_j)} \leq 2q_{\ell_j,k}^{(n_j)}$. The condition $\ell_j\in L_{n_j,k}$ leads to $q_{\ell_j,k}^{(n_j)}\geq q_{\ell_j+1,k}^{(n_j)}$. Thus, the constraint (b) of Lemma~\ref{lem-q-Rlimit} is satisfied with $s=2$. Hence, this lemma implies that there is an $\mathcal{R}$-limit $(H',G',v'_0)$ of $H$ and a generalized eigenfunction $0\neq \varphi'\in\ell^\infty(G')$ of $H'$ with eigenvalue $\lambda$ proving Theorem~\ref{thm:sigmainfty}.
\end{proof}

\medskip

\begin{proof} [Proof of Lemma~\ref{lem:lastlem}]
Let us fix $k \in\mathbb{N}$ and introduce the following notation $q_\ell^{(n)} := q_{\ell,k}^{(n)}$ and $L_n:=L_{n,k}$. Assume by contradiction that 
\begin{equation}
\label{eq:(1)}
\forall \{n_j\}_j,\; \forall \{\ell_j\}_j \text{(with } n_j\to\infty, \ell_j\to\infty), \exists j\in\mathbb{N} \,: 
	\;\; \ell_j\not\in L_{n_j} \;\; \text{ or }\;\; q_{\ell_j-1}^{(n_j)} > 2q_{l_j}^{(n_j)}.
\end{equation}
We first prove that this is equivalent to
\begin{equation}
\label{eq:(2)}
\exists n_0,\ell_0\in\mathbb{N},\quad \forall n\geq n_0,\; \forall \ell\geq \ell_0\,:\quad  q_\ell^{(n)} < \max_{i\geq \ell} q_i^{(n)} \quad \text{ or } \quad q_{\ell-1}^{(n)} > 2q_{\ell}^{(n)}.
\end{equation}
That \eqref{eq:(2)} implies \eqref{eq:(1)} is straightforward. In order to show the implication \eqref{eq:(1)}$\Rightarrow$\eqref{eq:(2)}, assume by contradiction that \eqref{eq:(1)} holds but \eqref{eq:(2)} does not hold. First note that $\ell\in L_{n}$ is equivalent to  $q_\ell^{(n)} \geq \max_{i\geq \ell} q_i^{(n)}$. Lets denote by $A_{n,\ell}$ the statement $q_\ell^{(n)} \geq \max_{i\geq \ell} q_i^{(n)}$ and $q_{\ell-1}^{(n)} \leq 2q_{\ell}^{(n)}$. Since \eqref{eq:(2)} does not hold, we conclude
\begin{equation}
\label{eq:(3)}
\forall n_0,\ell_0\in\mathbb{N},\quad \exists n\geq n_0,\; \exists \ell\geq \ell_0\,:\quad A_{n,l}.
\end{equation}
Then we can iteratively define a sequence $(n_k,\ell_k)$ with $n_k\to\infty$ and $\ell_k\to\infty$ such that $A_{n_k,\ell_k}$ is true for each $k\in\mathbb{N}$. More precisely, let $(n_1,\ell_1)$ be such that $1\leq \ell_1$, $1\leq n_1$ and $A_{\ell_1,n_1}$ is true, which is possible by \eqref{eq:(3)}. Suppose now we have $(n_k,\ell_k)$ for $1\leq k\leq m$ such that $A_{n_k,\ell_k}$ is true. By \eqref{eq:(3)}, there is an $n>n_m$ and $\ell>\ell_m$ such that $A_{n,\ell}$ is true. Set $n_{m+1}:=n$ and $\ell_{m+1}:=\ell$. By construction $n_k\to\infty$ and $\ell_k\to\infty$ and $A_{n_k,\ell_k}$ is true for all $k\in\mathbb{N}$. This contradicts \eqref{eq:(1)}. Thus, we have proven that \eqref{eq:(1)} and \eqref{eq:(2)} are equivalent. 

\medskip

By our assumption and the previous considerations \eqref{eq:(2)} holds. Let $\ell_0,n_0\in\mathbb{N}$ be chosen according to \eqref{eq:(2)}.
Furthermore, fix $n\geq n_0$, and $\ell\in L_n$ such that $\ell\geq \ell_0$ (exists since $L_n$ is infinite). Then \eqref{eq:(2)} leads to $q_{\ell-1}^{(n)}>2q_\ell^{(n)}$. Thus,
$$
q_{\ell-1}^{(n)}>2q_\ell^{(n)}\geq \max_{i\geq \ell}q_i^{(n)}
$$
follows implying that also $\ell-1\in L_n$. Since $\sharp L_n=\infty$, the latter considerations imply that $[\ell_0,\infty)\cap\mathbb{N}\subseteq L_n$. Thus, $q_{\ell-1}^{(n)}>2q_\ell^{(n)}$ holds for all $\ell\geq \ell_0$ by \eqref{eq:(2)}. Altogether, we derive that 
$$
q_\ell^{(n)}< \frac{1}{2}q_{\ell-1}^{(n)}<\frac{1}{2^2} q_{\ell-2}^{(n)} < \ldots < \frac{1}{2^{\ell-\ell_0}}q_{\ell_0}^{(n)} \leq \frac{1}{2^{\ell-l_0}} \|\psi_n\|_\infty
$$
holds for all $n\geq n_0$ and $\ell\geq \ell_0$. Hence, for $v\in\mathcal{V}(G)$ with $|v|=r\geq \ell_0 k$, the estimates
$$
|\psi_n(v)| \leq q_{\lfloor r/k\rfloor}^{(n)} \leq \frac{1}{2^{\lfloor r/k\rfloor-\ell_0}} \|\psi_n\|_\infty \leq \|\psi_n\|_\infty 2^{\ell_0} e^{-\frac{ln(2)}{k} r}
	\,,\qquad n\geq n_0\,,
$$
are deduced where $\lfloor r/k\rfloor$ denotes the largest integer $j$ satisfying $j\leq \frac{r}{k}$. Recall that $\mathcal{S}_r(v_0)$ denotes the sphere of radius $r$ and center $v_0$ in $G$. Since $(G,v_0)$ has sub-exponential growth rate, there is a constant $C_k>0$ such that $\sharp\mathcal{S}_r(v_0)<C_k \gamma_k^r$ for all $r\in\mathbb{N}$ where $\gamma_k:=e^{\frac{ln(2)}{2k}}>1$, see Definition~\ref{def:subexp}. Then a short computation yields
$$
\sum_{v\in V(G)\,:\, |v|\geq \ell_0k} |\psi_n(v)|^2 
	\leq \|\psi_n\|_\infty 2^{\ell_0} \sum_{r\geq \ell_0k} \sharp\mathcal{S}_r(v_0) e^{-\frac{ln(2)}{k} r}
	\leq \|\psi_n\|_\infty 2^{\ell_0} C_k \sum_{r\geq \ell_0k} \gamma_k^{-r}.
$$
The latter sum is convergent by the root test as $\gamma_k^{-1}<1$. Furthermore, the cardinality of all vertices $v\in V(G)$ with $|v|<\ell_0k$ is finite as $G$ is $d$-bounded graph. Hence,
$$
\|\psi_n\|_2^2 = 
\sum_{v\in V(G)} |\psi_n(v)|^2 = \sum_{v\in V(G)\,:\, |v|< \ell_0k} |\psi_n(v)|^2 + \sum_{v\in V(G)\,:\, |v|\geq \ell_0k} |\psi_n(v)|^2 
	\leq \widetilde{C}_k \|\psi_n\|_\infty
$$
follows for a suitable constant $\widetilde{C}_k>0$. Since $\lim_{n\to\infty}\|\psi_n\|_\infty=0$, we derive $\lim_{n\to\infty}\|\psi_n\|_2^2=0$, a contradiction as $\|\psi_n\|_2=1$. Thus, Lemma~\ref{lem:lastlem} is proven.
\end{proof}

\subsection{Proof of Proposition~\ref{prop:strictinclusion}}

Next we show that the inclusion of Theorem~\ref{thm:sigmainfty} can also be strict for graphs with sub-exponential growth rate. This is done by providing a specific example motivated by the considerations made in Section~\ref{sec:MainRes} about the $d$-regular tree. This construction is inspired by an example given in \cite{E}. In order to do so a more general construction of so called chain graphs is introduced next. 

\begin{definition}
\label{def:chaingraph}
Let $(G_k)_{k\in\mathbb{N}}$ be a sequence of finite graphs, $v^1_{k},v^2_{k}\in \mathcal{V}(G_k)$ for each $k\in\mathbb{N}$ and $\{k_\ell\}_{\ell\in\mathbb{N}} \subseteq \mathbb{N}$ be an increasing sequence. Then the corresponding {\it chain graph} is defined as follows:
\begin{itemize}
\item Begin with the graph $\mathbb{N}$, namely $V=\mathbb{N}$ and $E=\{(n,n+1)\,|\,n\in\mathbb{N}\}$.
\item For each $k\in \{k_\ell\}_{\ell=1}^\infty$, replace the vertex $k$ with the graph $G_k$.
\item The edges $(k-1,k)$, $(k,k+1)$ are replaced with the edges $\left(k-1,v^1_{k}\right)$, $\left(v^2_{k},k+1\right)$.
\item In case that $k=k_{\ell}=k_{\ell-1}+1$ the edge $(k-1,k)$ is replaced with the edge $\left(v^2_{k-1},v^1_{k}\right)$.
\end{itemize}
\end{definition}

\begin{proof}[Proof of Proposition~\ref{prop:strictinclusion}]
Fix $d>2$ and let $T_d$ be the $d$-regular tree with root $v_0$, and denote by $G_k$ the finite subgraph $B_k(v_0)$ in $T_d$.
Let $\{k_\ell\}_{\ell=1}^\infty$ be the sequence defined by $k_\ell:=d^{\ell+1}$. For each $\ell\in\mathbb{N}$, let $v_\ell^{1},v_\ell^{2}$ be two vertices in $G_\ell$ of maximal distance from each other. Then let $G^d$ be the corresponding chain graph, see Definition~\ref{def:chaingraph} and a sketch in Figure~\ref{figure:ChainGraph}.

\begin{figure}[ht!]
\centering
\includegraphics[width=130mm]{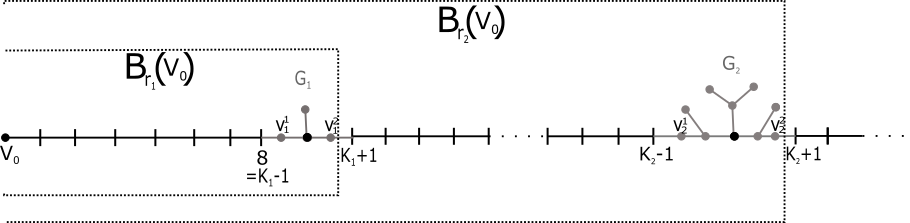}\\
\caption{The graph $G^d$ for $d=3$. 
\label{figure:ChainGraph}}
\end{figure}

Let $A_{G^d}$ and $A_{T_d}$ be the adjacency operator on $G^d$ and $T_d$ respectively, namely it is the Schr\"odinger operator of the form \eqref{eq:H} with $W(v):=deg(v)$. 
Next, we show that $G^d$ with root $v_0=1$ is of sub-exponential growth.

\medskip

A short computation gives that $\sharp G_k = 1+d\frac{(d-1)^{k}-1}{d-2} <\frac{d(d-1)^k}{d-2}$. Furthermore, $\delta_\ell :=\text{dist}(v_{\ell-1}^2,v_\ell^1) = d^{\ell+1}-d^\ell = (d-1)d^\ell$ holds implying $\sharp G_k< \delta_\ell$. Define $r_\ell := \text{dist}(v_0,v_\ell^2)$. Then $\delta_\ell < r_\ell-r_{\ell-1}$ holds. Next, we will show $\sharp B_{r_\ell}(v_0) < 2r_\ell, \ell\in\mathbb{N},$ via induction. For the base case, we have
$$
\sharp B_{r_1}(v_0) = k_1-1 + \sharp G_1 = d^2 + d
	< 2d^2+4 = 2(k_1 + 1) = 2r_1.
$$
The induction step is deduced by the estimate
$$
\sharp B_{r_\ell}(v_0) = \sharp B_{r_{\ell-1}}(v_0) + \delta_\ell + \sharp G_\ell < \sharp B_{r_{\ell-1}}(v_0) + 2\delta_\ell < 2 r_\ell
$$
invoking the induction hypothesis and the previous considerations. With this at hand, we derive by the definition of $G^d$ that
$$
\sharp B_{r_\ell+j}(v_0) = \sharp B_{r_\ell}(v_0) + j < 2(r_\ell + j),\qquad 0\leq j\leq k_{\ell+1}-k_\ell + 1.
$$
On the other hand decreasing the radius of $B_{r_\ell}(v_0)$ reduces the number of vertices in the ball by at least two in each step since $d>2$. Thus,
$$
\sharp B_{r_\ell-j}(v_0) < \sharp B_{r_\ell}(v_0) - \sum_{n=1}^j 2 < 2 (r_{\ell} - j),\qquad 1\leq j \leq 2\ell-1,
$$
follows. 

\medskip

Putting all together, we derive $\sharp B_k(v_0)<2k$ for each $k\in\mathbb{N}$. Furthermore, $G^d$ is a sub-graph of $T_d$. Thus, \cite[Corollary 4.5]{Mohar82} together with \cite[Section 7.c]{MW89} lead to
\[
\sigma_{\text ess}(A_{G^d})\subseteq \sigma(A_{G^d})\subseteq \left[-2\sqrt{d-1},2\sqrt{d-1}\right]=\sigma(A_{T_d}).
\]
On the other hand, one of the $\mathcal{R}$-limits of $A_{G^d}$ is the adjacency operator on the $d$-regular tree, for which $[-d,d]\subseteq\sigma_\infty(A_{T_d})$, see \cite[Theorem~1.1]{Brooks}. As a consequence, we conclude
\[
\emptyset \neq \Big[-d,d\Big]\backslash\left[-2\sqrt{d-1},2\sqrt{d-1}\right]
	\subseteq \bigcup_{H'\in\mathcal{R}(A_{G^d})} \sigma_{\infty}\left(H'\right) \backslash\ \sigma_{\text ess}(A_{G^d})
\]
since $d>2$.
\end{proof}

\section{Proof of Theorem~\ref{thm:fullresultsubexp}}
\label{sec:Thm-fullresultsubexp}

The proof follows from the results mentioned above, and the following theorem from \cite{BDE18}.

\begin{theorem}[{\cite[Theorem~2]{BDE18}}] 
\label{thm:rlimsubsetess}
Let $G$ be an infinite, connected $d$-bounded graph and $H$ a Schr\"odinger operator on $G$, then
\[
\bigcup_{H'\in\mathcal{R}(H)}\sigma(H')\subseteq\sigma_{\text ess}(H).
\]
\end{theorem}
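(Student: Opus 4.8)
The plan is to prove this inclusion by Weyl's criterion for the essential spectrum: for each $\lambda$ lying in the spectrum of some $\mathcal{R}$-limit $H'$, I would construct a \emph{singular Weyl sequence} for $H$ at $\lambda$, that is, a sequence of unit vectors $\phi_k\in\ell^2(G)$ with $\|(H-\lambda)\phi_k\|\to 0$ whose supports escape every finite set (so that $\phi_k\rightharpoonup 0$ weakly, and after arranging disjoint supports, the $\phi_k$ are orthonormal). The existence of such a sequence for a bounded self-adjoint operator is equivalent to $\lambda\in\sigma_{\mathrm{ess}}(H)$. So fix $\lambda\in\sigma(H')$, where $H'$ on $(G',v_0')$ is realized as a limit of $\{(H,G,v_n)\}_{n\in\mathbb{N}}$ along coherent maps $f_{n,r}:B_r(v_n)\to B_r(v_0')$ that are eventually graph isomorphisms (Definition~\ref{def:Rlimit}). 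By Lemma~\ref{lem:RlimProp}, $H'$ is itself a bounded self-adjoint Schr\"odinger operator of the form \eqref{eq:H}, so the following three steps make sense.

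First I would pass from $\lambda\in\sigma(H')$ to a compactly supported approximate eigenfunction. Since $H'$ is bounded self-adjoint with $\lambda\in\sigma(H')$, there are unit vectors $\psi_k\in\ell^2(G')$ with $\|(H'-\lambda)\psi_k\|<1/k$. Because $H'$ is \emph{local}, in the sense that $(H'\psi)(v)$ depends only on the values of $\psi$ at $v$ and its neighbours, truncating $\psi_k$ to a large ball $B_{m_k}(v_0')$ produces a commutator error $[H',\chi_{B_{m_k}(v_0')}]\psi_k$ supported near the sphere $\mathcal{S}_{m_k}(v_0')$, whose norm is bounded by the $\ell^2$-tail of $\psi_k$ outside $B_{m_k}(v_0')$. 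Choosing $m_k$ large enough and renormalizing, I may therefore assume $\mathrm{supp}(\psi_k)\subseteq B_{m_k-1}(v_0')$, $\|\psi_k\|=1$, and $\|(H'-\lambda)\psi_k\|<2/k$.

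Next I would transplant $\psi_k$ back onto $G$ near $v_n$. For each $n$ with $f_{n,m_k}$ a graph isomorphism, set $\phi_{k,n}:=\mathcal{I}_{f_{n,m_k}}^{-1}\psi_k$, which is supported in $B_{m_k-1}(v_n)\subseteq G$ and has unit norm since $\mathcal{I}_{f_{n,m_k}}$ is an isometry. The one-ball margin in the support, together with locality, gives $P_{v_n,m_k}H\phi_{k,n}=H_{v_n,m_k}\phi_{k,n}$, and then condition (C2) of Definition~\ref{def:Hgraphsconv} yields $\|\mathcal{I}_{f_{n,m_k}}(H-\lambda)\phi_{k,n}-(H'-\lambda)\psi_k\|\to 0$ as $n\to\infty$. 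Using again that $\mathcal{I}_{f_{n,m_k}}$ is an isometry, this forces $\|(H-\lambda)\phi_{k,n}\|\to\|(H'-\lambda)\psi_k\|<2/k$. Finally, since $v_n$ goes monotonically to infinity, I would pick $n(k)$ inductively so that $f_{n(k),m_k}$ is an isomorphism, $\|(H-\lambda)\phi_{k,n(k)}\|<3/k$, and $\mathrm{dist}(v_0,v_{n(k)})>\mathrm{dist}(v_0,v_{n(k-1)})+m_{k-1}+m_k+k$. This makes the balls $B_{m_k-1}(v_{n(k)})$ pairwise disjoint and forces their closest point to $v_0$ to tend to infinity, so $\phi_k:=\phi_{k,n(k)}$ is an orthonormal singular Weyl sequence, giving $\lambda\in\sigma_{\mathrm{ess}}(H)$.

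The main obstacle is the transplantation of the third step with simultaneously uniform control of both errors: I have only the finite-ball norm convergence (C2) at my disposal, so the compactly supported approximate eigenfunction for $H'$ must be transferred through the coherent isomorphism in such a way that the truncation error from the second step and the convergence error from (C2) remain small at once. Once the support margin guarantees that $H$ acts on $\phi_{k,n}$ exactly through $H_{v_n,m_k}$, the estimate becomes routine; the remaining care is bookkeeping, namely choosing $m_k$ and then $n(k)$ in the right order so that disjointness of supports and escape to infinity are secured together with the approximate-eigenfunction bound.
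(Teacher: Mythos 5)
Your proof is correct, but note that the paper does not prove this statement itself: it is imported verbatim from \cite[Theorem~2]{BDE18}, whose argument is precisely the one you give --- truncate an approximate eigenfunction of the $\mathcal{R}$-limit $H'$ to a ball using locality, transplant it through the eventually-isomorphic coherent maps $\mathcal{I}_{f_{n,r}}^{-1}$ onto balls $B_{m_k-1}(v_{n(k)})$ around vertices escaping to infinity, and conclude via Weyl's criterion with a disjointly supported orthonormal singular sequence. Your bookkeeping (the one-ball support margin so that $H$ acts through $H_{v_n,m_k}$, root-preservation of the coherent maps, and the inductive separation of the $n(k)$) is all sound, so the attempt matches the source's approach.
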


With this at hand, we can prove Theorem~\ref{thm:fullresultsubexp}.

\begin{proof}[Proof of Theorem~\ref{thm:fullresultsubexp}]
We already know from Theorem~\ref{thm:rlimsubsetess} and Theorem~\ref{thm:sigmainfty} that
\[
\bigcup_{H'\in\mathcal{R}(H)}\sigma(H')\subseteq\sigma_{\text ess}(H)\subseteq\bigcup_{H'\in\mathcal{R}(H)}\sigma_\infty(H').
\]
Thus, it suffices to prove for each $H'\in\mathcal{R}(H)$, that
\[
\sigma_\infty(H')\subseteq\sigma(H').
\]
In order to do so, recall the notions introduced in Section~\ref{ssect:Shnol}. Since $G$ admits a uniform sub-exponential growth also any $\mathcal{R}$-limit $(H',G',v'_0)$ of $(H,G,v_0)$ is of uniform sub-exponential growth. Indeed, for each $\gamma>1$ there exists a constant $C>0$ (independent of the root) such that $\sharp \mathcal{S}_r(u)<C\gamma^r$ for all $u\in V(G)$ and $r\in\mathbb{N}$, see Definition~\ref{def:subexp}. Let $\gamma>1$ be arbitrary and $r\in\mathbb{N}$. Since $(H',G',v'_0)$ is an $\mathcal{R}$-limit of $(H,G,v_0)$, there is a vertex $u\in V(G)$ such that the subgraphs $B_r(v'_0)$ and $B_r(u)$ are isomorphic (see Definition~\ref{def:Hgraphsconv}). In particular, the spheres contained in these balls have the same cardinality, namely $\sharp \mathcal{S}_r(u)=\sharp \mathcal{S}_r(v'_0)$. Since $\sharp \mathcal{S}_r(u)< C\gamma^r$ where the corresponding constant $C$ is independent of $u\in V(G)$, we derive $\sharp \mathcal{S}_r(v'_0)<C\gamma^r$. Thus, the map $V(G')\ni v'\mapsto e^{-\alpha\ \text{dist}(v',v'_0)}$ is an element $\ell^2(G')$ for any $\alpha>0$. 

\medskip

Let $\lambda\in\sigma_\infty(H')$, then by definition there is a bounded generalized eigenfunction $\varphi$ of $H'$ with eigenvalue $\lambda$. By the previous considerations
$$
V(G')\ni v'\mapsto e^{-\alpha\ \text{dist}(v',v'_0)}\varphi(v')
$$
is an element of $\ell^2(G')$ as $\varphi$ is uniformly bounded. Thus, $\varphi$ is sub-exponentially bounded implying $\lambda\in\sigma(H')$ by Proposition~\ref{prop:HK4.8}.
\end{proof}

\appendix
\section{Sparse trees with sparse cycles}
We provide here the proofs of Lemma~\ref{lem:CGspectra} and Lemma~\ref{lem:SGspectra} that were needed to compute the essential spectrum of the graphs named sparse trees with sparse cycles, see Section~\ref{sec:example2}.

\begin{proof}[Proof of Lemma~\ref{lem:CGspectra}]
The spectrum is calculated by using the periodicity of the underlying graph of $A_{\mathcal{C}}$ (following e.g.\ \cite[Chapter 5]{SimSz11} and \cite[Chapter XIII.16]{ReedSimon4}). The operator $A_{\mathcal{C}}$ is unitary equivalent to a direct integral of the operators
\[
A^{(\theta)}=A_{\mathbb Z}+2\cos\theta \delta_0.
\]
For indeed, define $\mathcal{F}:\ell^2(\mathcal{C})\to L^2\left(\mathbb{T},\frac{d\theta}{2\pi}; \ell^2(\mathbb{Z})\right)$ by
\[
\left(\mathcal{F}\psi\right)(\theta,\ell) :=\sum_{k=-\infty}^{\infty} \psi(k,\ell) e^{-ik\theta}.
\]
This operator is first defined on the finitely supported functions on $\mathcal{C}$ and then extended to $\ell^2(\mathcal{C})$ using
\[
\sum_{\ell\in\mathbb{Z}}\int_{\mathbb{T}}\left\Vert\mathcal{F}\psi(\cdot,\ell)\right\Vert^2\frac{d\theta}{2\pi} = \sum_{k,\ell\in\mathbb{Z}}\left|\psi(k,\ell)\right|^2.
\]
The inverse $\mathcal{F}^{-1}:L^2\left(\mathbb{T},\frac{d\theta}{2\pi}; \ell^2(\mathbb{Z})\right)\to \ell^2(\mathcal{C})$ of $\mathcal{F}$ is defined by
\[
(\mathcal{F}^{-1}f)(k,\ell)=\int e^{i k\theta}f(\theta,\ell)\frac{d\theta}{2\pi}.
\]
Then $\mathcal{F}$ is a unitary transformation and 
\[\left[(\mathcal{F}A\mathcal{F}^{-1})f\right](\theta,\ell)=\left(A^{(\theta)}f\right)(\theta,\ell)
\]
holds which can be checked by a short computation. 
Thus, in order to compute the spectrum of $A_{\mathcal{C}}$ we need to compute the spectrum of the direct integral. Define $R(z):=\left(A^{(\theta)}-z\right)^{-1}$ and $R_0(z):=\left(A_{\mathbb Z}-z\right)^{-1}$.
By the Aronszajn-Krein formula (see e.g.\ \cite{SimRanOne93}) we get
\[
m(z)=\langle\delta_0,R(z)\delta_0\rangle= \frac{m_0(z)}{1+2\cos\theta m_0(z)},
\]
where
\[
m_0(z)=\langle\delta_0,R_0(z)\delta_0\rangle=-\frac{1}{\sqrt{z^2-4}}
\]
is the Borel transform corresponding to the adjacency operator on $\mathbb{Z}$, see e.g.\ \cite{SimSz11}.
We know that $\sigma(A_{\mathbb Z})=\sigma_{\textrm{ess}}(A_{\mathbb Z})=[-2,2]$ and therefore $[-2,2]\subseteq \sigma\left(A^{(\theta)}\right)$ for all $\theta$. We can get additional points in the spectrum of $A^{(\theta)}$ if  $1+2\cos\theta m_0(z)$ vanishes. Thus, 
$\sqrt{z^2-4}=2\cos\theta$ follows implying
\[
z_{\pm}= \pm2\sqrt{1+\cos^2\theta}.
\]
The choice of the $\pm$-branch of the root is determined by requiring that $m(z)=-\nicefrac{1}{z}+O\left(\nicefrac{1}{z^2}\right)$, see \cite[pg.\ 53]{SimSz11}. As also follows by the fact that $A_{\mathcal{C}}$ is symmetric, both points $z_+$ and $z_-$ are included in the spectrum of $A^{(\theta)}$ for suitable choices of $\theta$.
Notice that the vector $\delta_0$ is not cyclic for $A^{(\theta)}$. However, since the difference $A^{(\theta)}-A_{\mathbb Z}=2\cos\theta(\delta_0,\cdot)\delta_0$ is of rank, we get one additional point in the spectrum for each $\theta$. This is the one we just have found before.
Using continuity in $\theta$ to construct approximate eigenfunctions and integrating over $\theta$ we get that
$\sigma\left(A_{\mathcal{C}}\right)=[-2\sqrt{2},2\sqrt{2}]$.
\end{proof}

\medskip

\begin{proof}[Proof of Lemma~\ref{lem:SGspectra}]
We shall calculate the spectrum of the adjacency operator on the graph $S_k$.
This graph is spherically homogeneous, and thus the operator $A_{S_k}$ is unitary equivalent to a direct sum of one dimensional Jacobi operators.
Indeed, according to Theorem~2.4 of \cite{Bre07}, 
\[
A_{S_k}\cong J_0\oplus A_{\mathbb N}\oplus\ldots\oplus A_{\mathbb N},
\]
where $J_0$ is a Jacobi matrix with parameters 
\[
a_n=\begin{cases}
\sqrt{k} & n=1 \\
1 & n>1,
\end{cases}
\]
\[b_n\equiv0,\]
and $A_{\mathbb N}$ appears in the direct sum $(k-1)$-times.
Since $J_0$ is a finite rank perturbation of $A_{\mathbb N}$ we have that $\sigma_{\text ess}(J_0)=\sigma_{\text ess}(A_{\mathbb N})=\sigma(A_{\mathbb N})=[-2,2]$.
Thus we should only calculate the discrete spectrum of $J_0$. By coefficient stripping (see e.g. \cite[Theorem~3.2.4]{SimSz11}) we get the following relation for the $m$-function of $J_0$
\[
m(z)=\frac{-1}{z+k m_{\mathbb N}(z)},
	\quad\text{ where }\quad
m(z)=\langle\delta_1,(J_0-z)^{-1}\delta_1\rangle.
\]
Notice that $\delta_1$ is a cyclic vector for $J_0$, thus additional points in the spectrum of $J_0$ exist only if
\begin{equation} \label{eq:mISksol}
z+k m_{\mathbb N}(z)=0.
\end{equation}
Using the known expression (see, e.g.\ \cite{SimSz11})
\begin{equation*}
m_{\mathbb{N}}\left(z\right)=\frac{-z+\sqrt{z^{2}-4}}{2}
\end{equation*}
we get that the solutions of \eqref{eq:mISksol} are $z_{\pm}=\pm\frac{k}{\sqrt{k-1}}$.
Thus $z_{\pm}\in\sigma\left(A_{S_k}\right)$, and
\[
\sigma\left(A_{S_k}\right)=\left\{-\frac{k}{\sqrt{k-1}}\right\}\cup[-2,2]\cup\left\{\frac{k}{\sqrt{k-1}}\right\}.
\]
\end{proof}

\end{document}